\pdfoutput=1
\documentclass[a4paper,12pt,reqno,oneside]{amsart}

\usepackage{geometry}
\usepackage{lmodern}
\usepackage[stretch=10]{microtype}
\usepackage{graphicx}
\usepackage{tikz-cd}

\usepackage{amsmath}
\usepackage{amssymb}
\usepackage{amsthm}
\usepackage[matrix,arrow,cmtip]{xy}
\usepackage{bbm}
\usepackage[shortlabels]{enumitem}
\PassOptionsToPackage{hyphens}{url}
\usepackage[bookmarksnumbered,bookmarksdepth=2,bookmarksopen,colorlinks,linkcolor=black,citecolor=black,urlcolor=black,linktoc=all]{hyperref}
\usepackage[capitalise]{cleveref}
\usepackage{mathrsfs}

\DeclareFontFamily{U}{wncy}{}
\DeclareFontShape{U}{wncy}{m}{n}{<->wncyr10}{}
\DeclareSymbolFont{mcy}{U}{wncy}{m}{n}
\DeclareMathSymbol{\Sha}{\mathord}{mcy}{"58}

\newcommand{\Qbar}{\overline \bQ}

\newcommand{\bQ}{\mathbb{Q}}

\newcommand{\ZZ}{\mathbb{Z}}
\newcommand{\QQ}{\mathbb{Q}}
\newcommand{\RR}{\mathbb{R}}
\newcommand{\CC}{\mathbb{C}}

\newcommand{\GG}{\mathbb{G}}
\newcommand{\PP}{\mathbb{P}}
\newcommand{\AAA}{\mathbb{A}}

\newcommand{\cA}{\mathcal{A}}
\newcommand{\cB}{\mathcal{B}}

\newcommand{\cG}{\mathcal{G}}

\newcommand{\cM}{\mathcal{M}}

\newcommand{\cO}{\mathcal{O}}

\newcommand{\fP}{\mathfrak{P}}
\newcommand{\fQ}{\mathfrak{Q}}

\newcommand{\rM}{\mathrm{M}}

\newcommand{\rd}{\mathrm{d}}

\DeclareMathOperator{\denom}{denom}

\DeclareMathOperator{\Hom}{Hom}

\DeclareMathOperator{\lcm}{lcm}

\DeclareMathOperator{\rk}{rk}

\DeclareMathOperator{\Sin}{Sin}

\DeclareMathOperator{\spmap}{sp}

\DeclareMathOperator{\trdeg}{trdeg}

\newcommand{\van}{{v\textrm{-}\mathrm{an}}}
\newcommand\vhatan{{\hat v\textrm{-}\mathrm{an}}}

\newcommand{\wan}{{w\textrm{-}\mathrm{an}}}
\newcommand\whatan{{\hat w\textrm{-}\mathrm{an}}}

\newcommand{\abs}[1]{\lvert #1 \rvert}

\newcommand{\powerseries}[2]{#1 [\![ #2 ]\!]}

\newcommand{\ov}{\overline}

\newcommand{\defterm}[1]{\textbf{#1}}

\newcommand{\numfuns}{\mu}
\newcommand{\numfunsabstract}{\mu}
\newcommand{\nummonomials}{\lambda}
\newcommand{\numlinmons}{\mu}

\newcommand{\relation}{P}
\newcommand{\relfactor}{Q}

\newcommand{\sfrac}[2]{#1 / #2}

\newtheorem{lemma}{Lemma}[section]
\AddToHook{env/lemma/begin}{\crefalias{lemma}{lemma}}
\newtheorem{proposition}[lemma]{Proposition}
\AddToHook{env/proposition/begin}{\crefalias{lemma}{proposition}}
\newtheorem{theorem}[lemma]{Theorem}
\AddToHook{env/theorem/begin}{\crefalias{lemma}{theorem}}
\newtheorem{corollary}[lemma]{Corollary}
\AddToHook{env/corollary/begin}{\crefalias{lemma}{corollary}}

\AddToHook{env/conjecture/begin}{\crefalias{lemma}{conjecture}}
\Crefname{conjecture}{Conjecture}{Conjectures} 

\Crefname{claim}{Claim}{Claims}
\newtheorem*{lemma*}{Lemma}
\newtheorem*{proposition*}{Proposition}
\newtheorem*{theorem*}{Theorem}
\newtheorem*{corollary*}{Corollary}
\newtheorem*{claim*}{Claim}

\theoremstyle{definition}
\newtheorem*{definition}{Definition}
\newtheorem{remark}[lemma]{Remark}
\AddToHook{env/remark/begin}{\crefalias{lemma}{remark}}

\usepackage{ifthen}
\newcounter{constant}
\newcommand{\createC}[1]{\refstepcounter{constant} \label{C:#1}}
\newcommand{\newC}[1]{%
  \ifthenelse{\equal{#1}{*}} {%
      \stepcounter{constant} c_{\theconstant}%
  } {%
      \createC{#1} c_{\theconstant}%
  }%
}
\newcommand{\refC}[1]{c_{\ref*{C:#1}}}

\usepackage{color}

\newcommand{\martin}[1]{}

\title[Explicit height bounds for G-functions and tori]{Explicit height bounds for G-functions and unlikely intersections with lines in tori}

\author{Martin Orr}

\address{Orr: Department of Mathematics, The University of Manchester, Alan Turing Building, Oxford Road, Manchester M13 9PL, United Kingdom}
\email{martin.orr@manchester.ac.uk}

\subjclass[2020]{11G30, 11G50, 11J91}

\begin{document}

\begin{abstract}
This paper computes explicit constants in Bombieri and Andr\'e's height bound for points at which there are unexpected ``global'' polymomial relations between values of G-functions.
It applies these bounds for G-functions to obtain explicit height bounds for unlikely intersections with lines in tori, making explicit a weak version of the bounded height theorem of Bombieri, Masser and Zannier and, in some cases, improving an explicit bound of Habegger.
\end{abstract}

\maketitle

\section{Introduction}

A G-function is a power series in $\powerseries{\Qbar}{X}$ satisfying a differential equation and a growth condition on its coefficients (for all absolute values on~$\Qbar$).
They were introduced by Siegel to study transcendence of numbers \cite{SFZ14}.
One of the main results on polynomial relations between values of G-functions is Bombieri's principle of global relations, sometimes known as the ``Hasse principle for values of G-functions'' \cite{Bom81}.

Andr\'e applied Bombieri's theorem to prove a height bound for what would later become known as unlikely intersections (in a situation involving endomorphisms of abelian varieties) \cite{And89}.
In recent years, Andr\'e's method of applying the principle of global relations has become an important tool in the study of unlikely intersections, being used to prove results on ``large Galois orbits'' and cases of the Zilber--Pink conjecture.
This includes work by Binyamini and Masser \cite{BM21}, Daw and the author of the present paper \cite{DO:ExCM}, \cite{DO:Y1n}, Papas \cite{Papas:ZP}, \cite{Papas:vadic-I} and Urbanik \cite{Urb25}, \cite{Urb:surfaces}, among other papers by the same authors.
For a summary of the method and applications, see Andr\'e's recent survey~\cite{And25}.

The primary goal of this paper is to establish explicit versions of the height bound in the principle of global relations.
As an illustration of the usefulness of these explicit height bounds, we derive an explicit bound for a weakened version of Bombieri, Masser and Zannier's theorem on unlikely intersections in tori \cite{BMZ99}.

While the G-functions method for unlikely intersections has often been described as ``effective in principle'', it turns out that non-trivial work is required to compute the input data for the explicit principle of global relations in terms of more natural input data.
This paper's methods could also be used to obtain explicit height bounds in applications of G-functions to unlikely intersections in Shimura varieties, although further work would be needed to compute these inputs in Shimura varieties settings.

\subsection{Height bound for relations between G-functions} \label{ssec:intro:g-functions}

Bombieri's principle of global relations deals with ``non-trivial global'' relations between values of G-functions.
Precise definitions of these notions involve some subtleties (see section~\ref{sec:relations}) but, essentially,
a ``global relation'' between G-functions $y_1,\dotsc,y_\mu$ at $\xi \in \Qbar$ is a polynomial~$P$ with coefficients in~$\Qbar$ such that the equation
\[ P(y_1(\xi), \dotsc, y_\mu(\xi)) = 0 \]
holds $v$-adically, for every place~$v$ at which this equation makes sense (that is, whenever $\xi$ lies inside the $v$-adic radii of convergence of $y_1, \dotsc, y_\mu$).
We say that $P$ is ``non-trivial'' if it does not come from a polynomial relation between the G-functions themselves.

Incorporating some improvements by Andr\'e, Bombieri's principle of global relations is as follows.

\begin{theorem} \label{andre-thm-E}
(\cite[Thm.~E]{And89}, building on \cite[Thm.~4]{Bom81})
Let $y_1,\dotsc,y_\numfuns$ be G-functions.
All points $\xi \in \Qbar$ at which there exists a non-trivial global polynomial relation~$\relation$ between the evaluations at~$\xi$ of $y_1,\dotsc,y_\numfuns$ satisfy
\[ h(\xi) < \refC{andre-thm-E-mult} \deg(\relation)^{\refC{andre-thm-E-exp}}, \]
where $\newC{andre-thm-E-mult}$ and~$\newC{andre-thm-E-exp}$ depend only on the G-functions $y_1,\dotsc,y_\numfuns$.
\end{theorem}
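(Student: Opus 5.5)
The plan is the Siegel--Bombieri method: an auxiliary polynomial built by Siegel's lemma, followed by the product formula. Fix $\xi$ and a non-trivial global relation $\relation$ of degree $\delta$. After replacing $y_1,\dotsc,y_\numfuns$ by a basis of the solutions of the differential system they satisfy (a first-order system $Y' = AY$ with $A \in M_n(\Qbar(X))$ whose solution vector contains the $y_i$), we may assume the $y_i$ and their derivatives are controlled by the system. We may also assume, by the usual reduction, that the $y_i$ are algebraically independent over $\Qbar(X)$ modulo the ideal of trivial relations. The G-function hypothesis, together with the Chudnovsky--André theorem, gives that the system satisfies the Galochkin condition. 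This yields a global constant bounding the size of the denominators of the Taylor coefficients and of the derivatives $\frac{1}{k!}\frac{d^k}{dX^k}$ applied to monomials in the $y_i$.

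\textbf{Step 1 (auxiliary polynomials).} For parameters $N$ (degree in $X$) and $M$ (degree in $Y$), Siegel's lemma over the number field generated by $\xi$ and the coefficients of $\relation$ gives a polynomial $R(X,Y) \ne 0$, of degree at most $N$ in $X$ and $M$ in $Y$. We require $R(X, y(X))$ to vanish to order $T \approx (N+1)\binom{M+\numfuns}{\numfuns}/\varepsilon$ at $X=0$, and $R$ to lie outside the ideal generated by $\relation$ and the trivial relations. We work modulo $\relation$, so the number of free monomials grows like $M^{\numfuns-1}\delta$ rather than $M^\numfuns$. The coefficient heights are bounded by $O(T\log T)$ using the G-function bounds. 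Applying derivatives gives a family of polynomials $R_k$, $k\le K$, and a Shidlovskii-type zero estimate shows that some $R_k$ with $k$ small does not vanish at $(\xi, y(\xi))$ modulo $\relation$. The zero estimate is a multiplicity estimate from the differential system; Bertrand--Beukers or Nesterenko suffices, with dependence polynomial in $\delta$.

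\textbf{Step 2 (local estimates).} Let $\theta = R_k(\xi,y(\xi))$, or better a suitable norm-type quantity built from $R_k$ and $\relation$, resolved via a resultant with $\relation$ so that we obtain an algebraic number. Then $\theta \ne 0$. At each place $v$ where $\xi$ lies within the $v$-adic radius of convergence, $\relation$ holds $v$-adically by globality. Hence $\theta$ equals the value of the power series $R_k(X,y(X))$, which vanishes to order $T-k$, and so $|\theta|_v \le (\text{const}\cdot|\xi|_v)^{T-k}$ times coefficient bounds. At the remaining places, $|\xi|_v$ is large, and we use the trivial Liouville bound $|\theta|_v \le \max(1,|\xi|_v)^{N}\cdot H_v(R)$.

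\textbf{Step 3 (product formula).} Combining via $\sum_v \log|\theta|_v = 0$ gives an inequality of the form $(T-K)\,(\text{contribution of small places}) \lesssim N h(\xi) + T\log T$. Since $\xi$ is inside the radius at the global places, the small-place terms carry a factor $-h(\xi)$ up to constants. Choosing $N \approx T/(\text{const}\cdot\delta)$ and $M$ polynomial in $\delta$ then forces $h(\xi) \le c\,\delta^{c'}$.

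The main obstacle is Step 1, namely the zero/multiplicity estimate working modulo $\relation$ with polynomial control in $\deg\relation$. Together with ensuring non-triviality propagates, this is where the "non-trivial" hypothesis enters. I would first try Bombieri's original approach, which takes $\theta$ as a determinant of Wronskian-type values rather than a resultant, since that handles the relation by linear algebra in the quotient ring.
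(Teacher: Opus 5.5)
There is a genuine gap. Your outline is in the right family (Siegel's lemma, a Shidlovskii-type non-vanishing, the product formula), but you never produce a \emph{non-zero algebraic} number that is small at every relevant place, and everything rests on that. $R_k(\xi,y(\xi))$ is not such a number. At each relevant place it is a different element $R_k(\xi,y^\van(\xi))$ of $K_v$, so ``$R_k$ does not vanish at $(\xi,y(\xi))$ modulo $\relation$'' gives the product formula nothing to act on.

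Nor is a ``resultant of $R_k(\xi,\cdot)$ with $\relation$'' a number once $\numfuns\geq 3$. Eliminating $Y_1,\dotsc,Y_\numfuns$ needs $\relation$ together with $\numfuns-1$ specialised auxiliary forms having no common zero. When there are functional relations you would have to work on $V_\xi$, with $\dim V_\xi$ such forms. That is a statement about every point of a variety, not about the single point $y^\van(\xi)$, and multiplicity estimates at $X=0$ do not provide it. Moreover, an elimination on $V_\xi$ degenerates as soon as $\relation$ vanishes on a component of $V_\xi$, and non-triviality alone does not exclude this (compare \cref{strongly-non-trivial-equivalent-defs}).

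Step~1 is also incoherent. Because $\relation$ is non-trivial, $\relation(y(X))\neq 0$ in $\powerseries{\Qbar}{X}$. Adding multiples of $\relation$ therefore destroys the vanishing at $X=0$, so you cannot build $R$ ``modulo $\relation$'', and the count of roughly $\delta M^{\numfuns-1}$ unknowns is meaningless. Two smaller points. Siegel's lemma needs $T$ to be about $(1-\varepsilon)$ times the number of unknowns, not that number divided by $\varepsilon$. The non-vanishing step must use that $\xi$ is an ordinary point or a good apparent singularity: the paper's example $(\log(1+X),X-a)$ shows the linear inequality fails at bad apparent singularities, and the finitely many other $\xi$ go into the constant.

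The missing idea is linearisation, which is what the paper does, and which your final sentence only gestures at. (The paper quotes \cref{andre-thm-E} from Andr\'e and proves the explicit form \cref{andre-5.2-explicit}.) Take $M=\delta$ and treat everything as linear in the degree-$\delta$ monomials $z_i=m_i(y)$. These solve a sub-operator of $\Lambda^{\odot\delta}$ with $\rho(z)\leq\rho(y)$, $\sigma(z)\leq(\log(\delta)+1)\sigma(y)$, $\sigma(\Lambda^{\odot\delta})\leq(\log(\delta)+1)\sigma(\Lambda)$ and no new non-apparent singularities. In these coordinates $\relation$ becomes a linear $(r_v)$-global relation.

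\Cref{linear-relations-redundant-variables} (Nakayama over $\Qbar[X]_{(X-\xi)}$) then extracts a $\Qbar(X)$-basis $(z_j)_{j\in J}$ on which this linear relation stays non-zero. This is where non-triviality is used, by pure linear algebra with no geometry of $V_\xi$. Next apply the linear inequality \cref{andre-explicit-bound-corrected}, where the relations enter through the factor $\nu/\numlinmons$ (with $\nu=\numlinmons-\kappa$) in front of $h(\xi)$. Combined with ${\sum_{v\in V}}'\log\abs{\xi}_v\leq -h(\xi)+\rho+\log(2)$ and $\tau\approx 1/(4s\nummonomials)$, this gives $h(\xi)\ll s\nummonomials^3(\log(\delta)+1)$ with $\nummonomials\leq\numfuns\delta^{\numfuns-1}$.

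Your idea of taking $M>\delta$ and exploiting many multiples of $\relation$ is essentially what \cref{andre-bombieri-HF} does to improve the exponent. It requires $\relation$ to be super-strongly non-trivial, because for a merely non-trivial $\relation$ the multiples $m'\relation$ can be trivial.
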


\cite[Thm.~4]{Bom81} and \cite[VII, 5.2, Thm.]{And89}\ give an explicit value for $\refC{andre-thm-E-exp}$.
\cite[Thm.~3]{Bom81} gives an explicit value for $\refC{andre-thm-E-mult}$ in the case $\deg(\relation)=1$, subject to finitely many exceptions (explicitly bounded in number).
Neither \cite{Bom81} nor \cite{And89} gives an explicit value for $\refC{andre-thm-E-mult}$ in general, although they give explicit bounds for some intermediate steps (\cite[Main~Thm.]{Bom81} and \cite[VII, 3.5, Prop.]{And89}).

This paper proves several explicit versions of \cref{andre-thm-E}:
\begin{enumerate}
\item \cref{andre-5.2-explicit}: an explicit value for $\refC{andre-thm-E-mult}$, with the same~$\refC{andre-thm-E-exp}$ as in \cite[Thm.~4]{Bom81} and~\cite[VII, 5.2, Thm.]{And89};
\item \cref{andre-bombieri} and \cref{andre-bombieri-simp}: more refined bounds which take into account additional information about the relations between $y_1,\dotsc,y_\numfuns$ at~$\xi$;
\item \cref{andre-bombieri-strong-general}: an explicit value for $\refC{andre-thm-E-mult}$, alongside a better value for~$\refC{andre-thm-E-exp}$, when the relation is ``super-strongly non-trivial'' (this improvement to~$\refC{andre-thm-E-exp}$, under a slightly weaker condition called ``strongly non-trivial'', was already included in \cite[VII, 5.2, Thm.]{And89}, building on \cite[Thm. 7 and~8]{Bom81});
\item \cref{andre-bombieri-HF,andre-bombieri-HF-bounds}: more refined bounds for super-strongly non-trivial relations, depending on how much information is available about the ideal of functional relations between the G-functions;
\item \cref{andre-bombieri-homog-alg-ind}: an explicit bound in the special case where the G-functions are homogeneously algebraically independent, as in the application in the second part of this paper.
\end{enumerate}

One could combine the refinements of (2) and~(3), to obtain still sharper bounds when both are applicable.
This combination of refinements is not included in this paper because it is not clear how to formulate a general statement.

\subsection{Application to unlikely intersections in tori} \label{ssec:intro:application}


Let $n \geq 2$.
For $d \in \ZZ_{\geq 1}$, let $\Sigma_d$ denote the union of all algebraic subgroups of~$\GG_m^n$ of codimension at least~$d$.
This paper's main theorem on unlikely intersections in~$\GG_m^n$ is the following.

\begin{theorem} \label{lines-in-torus-height-bound-intro}
Let $C \subset \GG_m^n$ be an algebraic curve whose Zariski closure in~$\AAA^n$ is a line defined over a number field~$K$.
Suppose that $C$ contains the point $s_0 := (1,1,\dotsc,1)$, and that $C$ is not contained in a proper algebraic subgroup of~$\GG_m^n$.

Then all points $s \in C(\ov K) \cap \Sigma_2$ satisfy
\[ h(s) \leq \refC{lines-in-torus-height-bound-intro-mult} [K(s):\QQ]^n \bigl( \log([K(s):\QQ])+1 \bigr), \]
where $\newC{lines-in-torus-height-bound-intro-mult}$ is given by an explicit formula (\cref{lines-in-torus-height-bound}) in terms of~$n$ and a suitable measure of the height of~$C$.
\end{theorem}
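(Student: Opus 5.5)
We follow André's G-functions method. Parametrise the line as $s(t) = (1+a_1 t, \dotsc, 1+a_n t)$ with $a_i \in K$ pairwise distinct and nonzero. The hypothesis that $C$ lies in no proper algebraic subgroup gives this form: for example, $a_i = a_j$ would make $x_i/x_j=1$ on $C$. The G-functions we use are the logarithms
\[ y_i(t) = \log(1+a_i t) = \sum_{k\ge1} (-1)^{k+1} a_i^k t^k/k, \qquad i=1,\dotsc,n. \]
These are G-functions with explicitly computable radii and denominators, governed by $h(a_i)$ and $\denom(a_i)$. With distinct nonzero $a_i$, the functions $1, y_1, \dotsc, y_n$ are algebraically independent over $\Qbar(t)$, so the $y_i$ are homogeneously algebraically independent. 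This is the setting of \cref{andre-bombieri-homog-alg-ind}.

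If $s = s(\xi) \in \Sigma_2$, there are two linearly independent vectors $b, b' \in \ZZ^n$ with $\prod (1+a_i\xi)^{b_i} = \prod (1+a_i\xi)^{b'_i} = 1$. At any place $v$ where $|\xi|_v$ is below the radius of convergence, the principal branch gives
\[ \sum b_i y_i(\xi) \in 2\pi i \ZZ \]
at archimedean $v$, and $=0$ at non-archimedean $v$, where $y_i(\xi)$ is the $p$-adic logarithm of a unit near $1$. Taking the combination
\[ \sum_i (b'_j b_i - b_j b'_i)\, y_i \]
for an index $j$ chosen to eliminate the $2\pi i$ ambiguity is not quite enough on its own. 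Instead, eliminate the ambiguity with the two relations: at archimedean places the two values are $2\pi i m$ and $2\pi i m'$, so $m' \sum b_i y_i - m \sum b'_i y_i = 0$. This yields a linear form, or a quadratic homogeneous relation if one uses $(\sum b_i y_i)(\sum b'_i y_i)$-type products, holding at every relevant place. The quantities $m, m'$ depend on $v$, so globally I would instead use the homogeneous quadratic
\[ \Bigl(\sum b_i y_i\Bigr)\Bigl(\sum b'_j y_j\Bigr) \cdot (\ldots). \]
More cleanly, use the degree-2 homogeneous form $P = L_b\cdot L_{b'}$ only when needed. A polynomial relation with coefficients independent of $v$ is obtained by replacing $b, b'$ by a suitable combination after bounding $|m|, |m'|$ via $|\xi|_v$ small. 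The non-triviality of the relation follows from homogeneous algebraic independence, since the relation is a nonzero polynomial in the $y_i$.

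To apply \cref{andre-bombieri-homog-alg-ind} we need the relation degree bounded, here $\le 2$, and we need that at places where $|\xi|_v$ is not small the relation need not hold. The output has the form
\[ h(\xi) \le c\,\bigl(\text{coefficient height}\bigr) \cdot [K(\xi):\QQ]^{\ldots}. \]
The coefficient height is $\log\max|b_i|$, so we must bound the exponent vectors. Since $s \in \Sigma_2$, Minkowski/Dirichlet-type arguments on the lattice of multiplicative relations (as in Bombieri--Masser--Zannier or Loher--Masser) give relation vectors $b, b'$ with $\max |b_i| \ll ([K(s):\QQ]\, h(s))^{n-1}$ up to logs. Feeding this in gives $h(\xi) \ll D^{n}(\log D + \log h(\xi))$. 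Absorbing $\log h(\xi)$ and converting $h(s) \le n h(\xi) + O(\sum h(a_i))$ yields the stated bound, with constant explicit in $n$ and the heights of the $a_i$.

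The main obstacle is the second step: producing a single global polynomial relation despite the place-dependent $2\pi i$ periods, and at the same time keeping the count of places and the degree controlled. I would first try the approach of using two independent relations to form a determinant-type combination that kills the periods. Failing that, I would restrict to places where $|\xi|_v$ is so small that $|\sum b_i y_i(\xi)|_v < 2\pi$, which forces the integer to be zero. This costs a factor $\max|b_i|$ in the smallness condition, which is exactly where the power $[K(s):\QQ]^n$ arises.
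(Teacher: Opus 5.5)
Your main construction of the global relation does not go through, and the bookkeeping meant to produce $[K(s):\QQ]^n$ rests on a misconception.

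At a relevant complex place $\hat w$ the product $L_bL_{b'}$ evaluates to $(2\pi i)^2m_{\hat w}m'_{\hat w}$. This is non-zero in general, so $L_bL_{b'}$ is not a $\hat w$-adic relation. A fixed combination $\alpha L_b+\beta L_{b'}$ works only at those $\hat w$ where $\alpha m_{\hat w}+\beta m'_{\hat w}=0$, and that condition varies with $\hat w$.

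The missing idea (\cref{relation-linear-arch,global-relation}) is to keep the place-dependent linear forms and multiply them:
\begin{itemize}
\item For each relevant complex place $\hat w$, choose integers $(\alpha,\beta)\neq(0,0)$ with $\alpha m_{\hat w}+\beta m'_{\hat w}=0$. Then $\alpha L_b+\beta L_{b'}$ is the non-zero linear form attached to the character $\phi_b^{\alpha}\phi_{b'}^{\beta}$, where $\phi_b=X_1^{b_1}\dotsm X_n^{b_n}$, and this character still kills $s$.
\item The $p$-adic and real logarithms are genuine homomorphisms on $D(1,1)$ (\cref{relation-linear-non-arch}). Hence each such form is automatically also a relation at every relevant non-archimedean and real place.
\item The product over the relevant complex places of $K(s)$ is therefore an $(r_v)$-global relation of degree at most $\delta=\max\{1,\tau(K(s))\}$. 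If there is no relevant complex place, a single $L_b$ already works. The relation is non-trivial by homogeneous algebraic independence.
\end{itemize}
The power $[K(s):\QQ]^n$ then comes from the factor $\delta^{\mu-1}=\delta^n$ in \cref{andre-bombieri-homog-alg-ind}, that is, from the \emph{degree} of the relation.

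Second, the Andr\'e--Bombieri bound depends only on the degree of the relation and on $s,H,\rho,\sigma_y,\sigma_\Lambda$. It does not depend on the heights of the relation's coefficients. So $\log\max\abs{b_i}$ never enters the paper's argument, and no Minkowski or Loher--Masser bound on $b$ is needed. Your step ``feeding this in gives $h(\xi)\ll D^n(\log D+\log h(\xi))$'' is not supported by anything you wrote.

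Your fallback is actually sound in the $(r_v)$ framework, but you have misplaced its cost. Take $r_v=\sigma_v(\Xi)/(4\lVert b\rVert_1)$ at archimedean $v$. Then $\abs{\sum_i b_iy_i(\xi)}_{\hat v}\le\tfrac13<2\pi$ at every $(r_v)$-relevant archimedean place, so the integer is $0$ and $L_b$ alone is a degree-$1$ $(r_v)$-global relation. The price is an \emph{additive} $\log(4\lVert b\rVert_1)$ in $\rho$, not a multiplicative factor in the bound. So \cref{andre-bombieri} gives $h(\xi)\ll n^4(H+1+\log\lVert b\rVert_1)$.

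Finishing that route needs an explicit multiplicative-dependence bound as external input. It would then give a bound only logarithmic in $[K(s):\QQ]$, so it is not ``exactly where the power $[K(s):\QQ]^n$ arises''. As written, neither of your two routes is carried out correctly.
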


Calculating the constant in \cref{lines-in-torus-height-bound-intro} in the case~$n=3$ yields the following.

\begin{corollary} \label{lines-in-Gm3-height-bound-intro}
Let $K$ be a number field.
Let $a_1,a_2,a_3 \in K^\times$ be distinct.
Let 
\[ H = h(a_1) + h(a_2) + h(a_3). \]
For all $\xi \in \Qbar$, if $1+a_1\xi$, $1+a_2\xi$ and $1+a_3\xi$ lie in a rank-$1$ subgroup of $\GG_m(\Qbar)$, then
\begin{enumerate}[(i)]
\item $h(\xi) < 2708(H+1)$ if $K(\xi)$ is totally real or $[K(\xi):\QQ] \leq 3$;
\item $h(\xi) < 166 [K(\xi):\QQ]^3 \bigl( \log([K(\xi):\QQ]) + 2.5 \bigr) (H+1)$ otherwise.
\end{enumerate}
\end{corollary}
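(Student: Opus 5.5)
This corollary will follow by specialising \cref{lines-in-torus-height-bound-intro}, i.e.\ the explicit \cref{lines-in-torus-height-bound}, to $n=3$ and the line
\[ C = \{ (1+a_1\xi, 1+a_2\xi, 1+a_3\xi) : \xi \} \cap \GG_m^3 , \]
which passes through $s_0=(1,1,1)$ at $\xi=0$.

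\textbf{Hypotheses.} First I check that $C$ is not contained in a proper algebraic subgroup. If $\prod (1+a_i\xi)^{k_i}=1$ identically with $(k_1,k_2,k_3)\neq 0$, then comparing zeros and poles at $\xi=-1/a_i$ forces every $k_i=0$, since the $a_i$ are distinct and nonzero.

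\textbf{Translating the condition.} The points $1+a_i\xi$ lie in a rank-$1$ subgroup exactly when there are two independent multiplicative relations among them. This says $s(\xi)\in\Sigma_2$.

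\textbf{Heights.} Since $h(1+a_i\xi)\le h(\xi)+h(a_i)+\log 2$, we have $h(s)$ and $h(\xi)$ comparable up to $O(H+1)$. More precisely, $\xi=((1+a_1\xi)-1)/a_1$ gives $h(\xi)\le h(s_1)+h(a_1)+\log 2$. I also need to bound the paper's ``suitable measure of the height of $C$'' in terms of $H$. I would use the parametrisation coefficients $(1,a_i)$, giving something like $H$ plus a constant. These estimates let me plug into \cref{lines-in-torus-height-bound} with $[K(s):\QQ]=[K(\xi):\QQ]=:D$, which gives (ii) with constant $166$ after evaluating the explicit formula at $n=3$. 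The shift from $+1$ to $+2.5$ absorbs the additive constants.

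\textbf{Case (i).} The bound is independent of $D$, so the general theorem is not enough and I need a refined version. When $K(\xi)$ is totally real, the torsion-free rank-$1$ situation should force relations with bounded exponents. The key fact is that the only roots of unity in a real field are $\pm1$. When $D\le 3$, the dimension-counting in the G-function argument (the number of independent relations compared with the degree) gives a bound whose dependence on $D$ can be absorbed into a constant. The main step is to go back into the proof of \cref{lines-in-torus-height-bound}. The degree dependence $D^n\log D$ should arise from bounding the exponents of the multiplicative relation, essentially via a Dirichlet/Minkowski argument on $\log|\cdot|_v$ vectors, or from the torsion order in the rank-$1$ group. In the totally real or low-degree case, torsion is at most $\pm1$ or bounded, so the relation degree $\deg P$ is bounded by an absolute quantity. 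Feeding this into \cref{andre-bombieri-homog-alg-ind} should give a bound linear in $H+1$, and evaluating it numerically yields $2708$.

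\textbf{Main obstacle.} The hard part is precisely this: justifying that the exponents, and hence $\deg P$, are bounded independently of $D$ in case (i). This needs careful control of the relations via heights, using that the rank-$1$ group is generated by an element whose height is bounded below by Dobrowolski/Schinzel-type bounds. Schinzel's bound is uniform for totally real fields, and for $D\le 3$ there is a uniform Lehmer-type bound. With such a bound, the exponents are $O(h(s)/\text{const})$, which closes the argument linearly.
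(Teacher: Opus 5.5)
There is a genuine gap: you misread what \cref{lines-in-torus-height-bound} depends on. Its parameter is not $[K(\xi):\QQ]$ but $\delta=\max\{1,\tau(K(\xi))\}$, the number of complex places of $K(\xi)=K(s)$. This is because, by \cref{global-relation}, the global relation is a product of one linear form $P_{\phi_{\hat w}}$ per relevant complex place. If $K(\xi)$ is totally real then $\tau=0$, and if $[K(\xi):\QQ]\le 3$ then $\tau\le 1$. Either way $\delta=1$, and part (i) of the theorem with $n=3$ and $\lambda=\binom{4}{1}=4$ gives directly
\[ h(\xi) < 3\,(8\cdot 4^3-3.8\cdot 4^2)\cdot 2\,(H+1) = 2707.2\,(H+1). \]
So no refined version is needed. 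Your proposed mechanism for case (i) would also not work. The G-function bounds depend only on the degree of the global relation (and on $\lambda,\kappa$), never on the heights of its coefficients. Moreover, $P_\phi=\sum b_iY_i$ is linear however large the exponents $b_i$ are. Bounding exponents via Schinzel, Dobrowolski or Lehmer-type estimates therefore cannot reduce the degree: the integers $r_j$ in \cref{relation-linear-arch-inhomog} still depend on the complex place, so each place needs its own linear combination. The real reason case (i) is independent of the degree is that there is at most one complex place. A single linear relation is then already $(r_v)$-global (\cref{relation-linear-non-arch,relation-linear-arch}).

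The same misreading breaks your derivation of (ii). Substituting $\delta=[K(\xi):\QQ]=:D$ into part (ii) of the theorem gives $8\cdot 3\cdot(7/2)^2\cdot(9/2)\,D^3(\log(3D)+2)(H+1)=1323\,D^3(\log(3D)+2)(H+1)$, which is eight times too large. The constant $166$ comes from $\delta=\tau(K(\xi))\le D/2$. This gives $1323/8=165.375$ and $\log(3\delta)+2\le\log D+\log(3/2)+2<\log D+2.5$. That step needs $\delta\ge 2$, which is the hypothesis $\delta\ge\frac14(n+2)$. When $\delta=1$ but $D\ge 4$, you instead use the bound $2708(H+1)$ from (i), which is smaller. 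Finally, the height comparisons in your third paragraph are unnecessary. The theorem bounds $h(x(s))=h(\xi)$ directly, and $H(\Xi)=h(a_1)+h(a_2)+h(a_3)$ exactly, since $\Xi=\{-1/a_i\}$ has three distinct elements. The $2.5$ is $\log(3/2)+2$ rounded up, not an absorption of additive errors.
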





In order to compute the values for the constants in \cref{lines-in-Gm3-height-bound-intro}, it is necessary to compute the input data for \cref{andre-bombieri,andre-bombieri-homog-alg-ind} in this situation.
This requires investigation of the differential equation satisfied by the G-functions, their radii of convergence and the radii inside which the global relations are valid.

Even if we ignore the explicit constants, this is the first written application of the G-functions method to unlikely intersections in algebraic group, as opposed to Shimura varieties or variations of Hodge structure.
The application to tori is simpler than the previous applications to Shimura varieties.
Therefore, the proof in section~\ref{ssec:tori-inexplicit} that there exists a bound of the form stated in \cref{lines-in-torus-height-bound-intro}, without calculating explicit constants, should be of interest as an illustration of the G-functions method for unlikely intersections in its simplest possible case.

\subsection{Previous results on intersections in tori} \label{ssec:intro:tori-previous}

The first theorem on height bounds for unlikely intersections in tori was the following theorem of Bombieri, Masser and Zannier.
It also includes ``just likely'' intersections.

\begin{theorem} \label{bmz} \cite[Thm.~1]{BMZ99}
Let $C$ be an irreducible algebraic curve in~$\GG_m^n$, defined over~$\Qbar$.
Suppose that $C$ is not contained in a translate of a proper algebraic subgroup of~$\GG_m^n$.
Then $C \cap \Sigma_1$ is a set of $\Qbar$-points of bounded height.
\end{theorem}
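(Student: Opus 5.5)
We use the classical Bombieri--Masser--Zannier strategy, which does not use G-functions. It rests on two ingredients: a uniform lower bound for heights of monomials on~$C$, and a Dirichlet approximation argument that converts a single multiplicative relation into a monomial of small height. Throughout, $|b|$ denotes the sup-norm of $b \in \ZZ^n$, and $x^b = x_1^{b_1} \dotsm x_n^{b_n}$.

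\textbf{Step 1 (uniform lower bound).} We aim to show that there are constants $c>0$ and $C'$, depending only on~$C$, such that for all $b \in \ZZ^n \setminus \{0\}$ and all $P \in C(\Qbar)$,
\[ h(x^b(P)) \geq c\,|b|\,h(P) - C'|b|. \]
Since $C$ is not contained in a translate of a proper algebraic subgroup, each $x^b|_C$ with $b \ne 0$ is a non-constant rational function.
\begin{enumerate}
\item Let $\tilde C$ be the smooth projective model of~$C$ and $p_1,\dotsc,p_r$ the points above $\PP^n \setminus \GG_m^n$. The linear map $b \mapsto (\operatorname{ord}_{p_j}(x^b))_j$ is then injective on~$\ZZ^n$, because a monomial with no zeros or poles is constant.
\item Hence $\deg(x^b|_C) = \sum_j \max(0,\operatorname{ord}_{p_j} x^b) \geq c_0 |b|$, where $c_0>0$ comes from comparing a piecewise-linear norm with $|\cdot|$.
\item We write the height as a sum of local Weil functions attached to the divisor of poles. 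The local terms $\log|x^b|_v = \sum_i b_i \log|x_i|_v$ depend linearly on~$b$. Therefore the error terms in the comparison $h(x^b(P)) \approx \deg(x^b)\,h_{D}(P)$, with $D$ a fixed ample divisor, are bounded by $|b|$ times finitely many fixed bounded functions, coming from the $x_i$ alone.
\item Combining (2) and (3) with $h_D \gg h - O(1)$ gives the displayed inequality.
\end{enumerate}

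\textbf{Step 2 (Dirichlet).} Let $s \in C \cap \Sigma_1$, so $s^a = 1$ for some nonzero $a \in \ZZ^n$; order the coordinates so that $|a_n| = |a|$. For a parameter $Q \ge 1$, simultaneous Dirichlet approximation to $a_1/a_n,\dotsc,a_{n-1}/a_n$ gives an integer $1 \le q \le Q^{n-1}$ and $b \in \ZZ^n$ with $b_n = q$ and $|q a_i/a_n - b_i| \le 1/Q$. Then $|a_n b - q a| \le |a_n|/Q$, so using $s^a=1$,
\[ |a_n|\, h(s^b) = h(s^{a_n b}) = h(s^{a_n b - q a}) \le n\,\frac{|a_n|}{Q}\, h(s). \]
This gives $h(s^b) \le n\,h(s)/Q$, where $b \neq 0$ since $b_n = q \ge 1$.

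\textbf{Step 3 (conclusion).} Step 1 gives $c|b|h(s) - C'|b| \le n h(s)/Q \le n|b|h(s)/Q$. Choosing $Q = 2n/c$ yields $h(s) \le 2C'/c$, a bound depending only on~$C$.

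The main obstacle is Step 1, specifically the uniformity in~$b$ of the error term. A generic height-comparison theorem would give an error depending on the function $x^b$ in an uncontrolled way. The point is to exploit linearity of $b \mapsto \log|x^b|_v$ so that the error stays $O(|b|)$, with no term of order $|b|\sqrt{h}$ or worse.
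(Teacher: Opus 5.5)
The paper gives no proof of this statement. It is quoted from \cite{BMZ99} as background, and the G-function method of the paper only recovers a weaker version: points of $\Sigma_2$ only, lines through $s_0$, and bounds growing with $[K(s):\QQ]$. Your argument therefore has to stand on its own. It is the classical height-theoretic approach and the conclusion is right, but its logical structure is off and one justification in Step~1 is wrong.

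\textbf{Step 1 makes Steps 2 and 3 unnecessary.} Step~1 as you state it, uniformly in $b$, already proves the theorem. Take $b=a$: then $0=h(s^a)\geq c\abs{a}h(s)-C'\abs{a}$, so $h(s)\leq C'/c$. Steps~2 and~3 add nothing; indeed Step~3 only uses $\abs{b}\geq 1$ and never uses $q\leq Q^{n-1}$.

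\textbf{What the Dirichlet step is really for.} Its value is that $\abs{b}\leq Q^{n-1}+1$, with $Q=\lceil 2n/c\rceil$ depending only on $C$. So $b$ ranges over a finite set, and you only need $h(s^b)\geq c\,h(s)-C_b$ for each fixed $b\neq 0$, with $c$ independent of $b$. That follows from the standard height machine:
\begin{enumerate}[(i)]
\item the polar divisor of $x^b$ is a non-zero effective divisor supported on $\{p_1,\dotsc,p_r\}$, so $h(x^b(P))\geq h_{p_j}(P)-O_b(1)$ for some $j$;
\item $h_{p_j}\geq \tfrac{1}{N}h-O(1)$ for every $j$, because a point is an ample divisor on a curve.
\end{enumerate}
Then $c\,h(s)-\max_b C_b\leq \tfrac{n}{Q}h(s)\leq \tfrac{c}{2}h(s)$, and you are done. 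So the uniformity in $b$ that you call the main obstacle can be avoided entirely. Even an error of size $\abs{b}(\sqrt{h}+1)$ in Step~1 would have been harmless in Step~3.

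\textbf{Item (3) of Step 1 is not correct as written.} If you do want the uniform Step~1, the justification needs repair. For a fixed divisor $D$, the comparison $h(x^b(P))=\deg(x^b)h_D(P)/\deg D+O(\abs{b})$ fails when $\tilde C$ has positive genus. The polar divisor $\sum_j m_j(b)p_j$, where $m_j(b)=\max(0,-\operatorname{ord}_{p_j}x^b)$, is in general not a rational multiple of $D$ in $\mathrm{Pic}(\tilde C)$. The discrepancy is then the height attached to a degree-zero divisor class. This is unbounded on $C(\Qbar)$ unless the class is torsion, and is of order $\sqrt{h(P)}$ in general.

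What linearity of $b\mapsto\log\abs{x^b}_v$ and additivity of local Weil functions actually give is
\[ h(x^b(P))=\sum_j m_j(b)\,h_{p_j}(P)+O(\abs{b}), \]
uniformly in $P$ and $b$. Here the error comes from finitely many bounded functions that vanish at almost all places. Combine this with $h_{p_j}\geq\tfrac1N h-O(1)$ for each $j$ and $\sum_j m_j(b)=\deg(x^b)\geq c_0\abs{b}$. This yields your displayed inequality with $c=c_0/N$, so Step~1 holds once justified this way, and Step~1 alone then proves the theorem.
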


The proofs of \cref{bmz} in \cite{BMZ99} are in principle effective.
An explicit bound was obtained by Habegger \cite[Thm.~11]{Hab17} (\cref{habegger-bound}).

\subsubsection{Comparison of Theorem~\ref{lines-in-torus-height-bound-intro} with Theorem~\ref{bmz}} \label{torus-bound-restrictions}

\Cref{lines-in-torus-height-bound-intro} is weaker than \cref{bmz} and \cite[Thm.~11]{Hab17} in the following ways:
\begin{enumerate}
\item The height bound in \cref{lines-in-torus-height-bound-intro} depends (polynomially) on $[K(s):\QQ]$, while the height bounds in \cref{bmz} and \cite[Thm.~11]{Hab17} are independent of the field of definition of~$s$.
\item \Cref{lines-in-torus-height-bound-intro} only bounds the height of points in $C \cap \Sigma_2$ (unlikely intersections), instead of $C \cap \Sigma_1$ (unlikely and just likely intersections).
\item In \cref{lines-in-torus-height-bound-intro}, the curve~$C$ must contain the identity point $(1,1,\dotsc,1)$ of~$\GG_m^n$.
(This could be weakened to saying that $C$ must contain a torsion point of~$\GG_m^n$, but that does not gain anything significant.)
\item In \cref{lines-in-torus-height-bound-intro}, the closure in~$\AAA^n$ of~$C$ must be a line.
\end{enumerate}

Restrictions (1), (2) and~(3) appear to be essential to the G-functions method for intersections in algebraic groups, even if one does not ask for explicit bounds.
Restriction~(1) arises because we build a global relation as a product of $v$-adic relations, involving a factor for each complex place of~$K(s)$, so that the degree of the global relation may be proportional to $[K(s):\QQ]$.
Restrictions (2) and~(3) are needed to construct $v$-adic relations at archimedean places with coefficients in~$\Qbar$ -- without these restrictions, we could only construct relations with coefficients involving $2\pi i$ or logarithms of coordinates of a chosen base point on~$C$.

Restriction~(4) is not essential to the G-functions method, but it significantly simplifies the argument and leads to stronger explicit bounds because it avoids the need to pass to a cover of~$C$ with a ``good local parameter'', as in \cite[Lemma~5.1]{DO:Y1n}.
Restriction~(4) will be removed in a later paper, which will also apply the same method to abelian varieties.

\subsubsection{Comparison of Theorem~\ref{lines-in-torus-height-bound-intro} with \cite[Thm.~11]{Hab17}}

Restriction~(1) means that the explicit height bounds coming from \cref{lines-in-torus-height-bound-intro} are inevitably worse than those of \cite[Thm.~11]{Hab17} for intersection points of large Galois degree.
However, the bounds of \cref{lines-in-torus-height-bound-intro} are better for points of small Galois degree.
Indeed, in the case $n=3$, the bound in \cref{lines-in-torus-height-bound-intro} is better than \cite[Thm.~11]{Hab17} for $[K(s):\QQ]$ up to around~$10^{10^{12}}$, and this cross-over point increases for larger~$n$ (see section~\ref{ssec:habegger}).
Even if one optimises the arguments of \cite{Hab17} for the special case of lines, \cref{lines-in-torus-height-bound-intro} is better for $[K(s):\QQ]$ up to around~$10\,000$.

\subsubsection{Application to the Zilber--Pink conjecture}

It is well-known that \cref{bmz} implies the Zilber--Pink conjecture for curves in~$\GG_m^n$, that is:

\begin{theorem} \label{zp-torus} \cite[Thm.~2]{BMZ99}
In the setting of \cref{bmz}, $C \cap \Sigma_2$ is finite.
\end{theorem}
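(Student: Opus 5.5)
The plan is to deduce finiteness of $C \cap \Sigma_2$ from the height bound of \cref{bmz}, combined with Northcott's theorem and a lower bound for heights on tori. By \cref{bmz}, every point $s \in C \cap \Sigma_2 \subset C \cap \Sigma_1$ satisfies $h(s) \leq B$ for a constant $B$ depending only on~$C$. Northcott's theorem alone is not enough here, because the degrees $[\QQ(s):\QQ]$ are not a priori bounded. The real work is therefore to bound the degree of points in $C \cap \Sigma_2$.

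For $s \in C \cap \Sigma_2$, choose two independent relations $s^{\mathbf{a}} = 1$ and $s^{\mathbf{b}} = 1$ with $\mathbf{a}, \mathbf{b} \in \ZZ^n$. Using a geometry-of-numbers argument (Dirichlet/Minkowski), replace $\mathbf{a}, \mathbf{b}$ by vectors in the lattice of relations of $s$ which have controlled size. Intersect~$C$ with the subgroup defined by $\mathbf{a}$ alone. Because $C$ is not contained in a translate of a proper subgroup, the function $x^{\mathbf{a}}$ is non-constant on~$C$, and its degree is $\ll |\mathbf{a}|$. The points of $C$ with $x^{\mathbf{a}} = 1$ therefore have degree over the field of definition $\ll |\mathbf{a}|$. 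The second relation then cuts the relevant points out of a zero-dimensional set.

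The main obstacle is obtaining a bound on $|\mathbf{a}|, |\mathbf{b}|$ in terms of $h(s)$ and $[\QQ(s):\QQ]$ that is strong enough to beat a lower bound for the height. Following BMZ, I would do this in three steps:
\begin{enumerate}
\item Use bounded height to find small relations via Dirichlet approximation applied to the vector $(\log|s_i|_v)$. This yields a relation of size roughly $D^{c}$ with $D = [\QQ(s):\QQ]$.
\item Apply a Dobrowolski-type or Amoroso--David lower bound for the height of points of $\GG_m^n$ that are not torsion and not in a small subgroup. Applied to the projection of $s$ to a suitable rank-one quotient, this gives $h \gg D^{-1-\epsilon}$.
\item Combine the two estimates with the degree bound from intersecting $C$ with $\{x^{\mathbf{a}} = 1\}$ and with the codimension-2 condition. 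The key point is that two independent relations force an essentially \emph{bounded} degree; this is exactly where "unlikely" (codimension~2) rather than codimension~1 is needed.
\end{enumerate}

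Once $D$ is bounded, Northcott's theorem with height $\leq B$ gives finiteness. Torsion points of $C$ are handled separately: Ihara--Serre--Tate (Laurent's theorem) shows there are finitely many, since $C$ is not a torsion coset.
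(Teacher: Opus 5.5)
Your overall route (bounded height from \cref{bmz}, then a bound on $D=[K(s):K]$, then Northcott) is the right one. It is the deduction via \cite[\S4]{BMZ99} that the paper alludes to; the paper itself only cites the result. The gap is that the degree bound, which is the entire content, is only asserted in your step~3, and the ingredients you list do not yield it.

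Write $h(s)\le B$. Counting conjugates in $C\cap\{x^{\mathbf a}=1\}$ gives $D\ll_C|\mathbf a|$ for an \emph{exact} relation $s^{\mathbf a}=1$. So you need such an $\mathbf a$ with $|\mathbf a|\ll D^{1-\epsilon}$; a relation of size ``roughly $D^c$'' is useless unless $c<1$.

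Codimension~$2$ enters through Minkowski's theorem. Let $\Gamma$ be the group generated by the coordinates of $s$ modulo torsion, of rank $m\le n-2$. Let $\Lambda\subset\ZZ^n$ be the lattice of $\mathbf a$ with $s^{\mathbf a}$ torsion, of rank $n-m\ge 2$. Then $\lambda_1(\Lambda)\ll(\det\Lambda)^{1/(n-m)}$ and $\det\Lambda\ll B^m/(\mu_1\cdots\mu_m)$, where $\mu_1,\dotsc,\mu_m$ are the successive minima of $\Gamma$ with respect to the height.

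A rank-one bound of Dobrowolski type, which is what your step~2 applies to a rank-one quotient, only gives $\mu_1\cdots\mu_m\gg D^{-m(1+\epsilon)}$. Hence $\lambda_1(\Lambda)\ll D^{m(1+\epsilon)/(n-m)}$, which is $o(D)$ only when $2m<n$. It therefore fails for $n\ge4$, $m=n-2$. What is needed is the genuinely higher-dimensional Amoroso--David bound $h(\gamma_1)\cdots h(\gamma_m)\gg D^{-1-\epsilon}$ for multiplicatively independent $\gamma_j$ in a field of degree~$D$. This gives $\lambda_1(\Lambda)\ll D^{1/2+\epsilon}$, the square-root saving that codimension~$2$ buys.

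There is a second gap: torsion is not handled. Dirichlet approximation plus a Lehmer bound only produces $\mathbf a$ with $s^{\mathbf a}=\zeta$ a root of unity. The conjugates of $s$ are then spread over the fibres $C\cap\{x^{\mathbf a}=\sigma(\zeta)\}$, so you only get $D\ll|\mathbf a|\,[K(\zeta):K]$.

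Equivalently, passing to the lattice of exact relations multiplies $\det\Lambda$ by the index $[\Lambda:\Lambda_{\mathrm{exact}}]$. This index is the order of the group $\{s^{\mathbf a}:\mathbf a\in\Lambda\}$ of roots of unity, which can a priori be of size about $D$. That wipes out the square-root saving when $n-m=2$.

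This cyclotomic contribution has to be controlled separately. One way is to count conjugates over $K(\mu_\infty)$, using a Lehmer-type bound relative to the cyclotomic extension. One then finishes with Ihara--Serre--Tate applied to the torsion point $\pi_T(s)$ on the image of $C$ in $\GG_m^n/T\cong\GG_m^{n-m}$, where $T$ is the subtorus cut out by $\Lambda$. Your closing appeal to Laurent's theorem only treats points $s$ that are themselves torsion, which is not where the difficulty lies.
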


Restrictions (1) and~(2) do not pose a problem for deducing \cref{zp-torus} from \cref{bmz}.
Indeed, a height bound which is polynomial in the degree of the point suffices to obtain the Galois bound needed to prove \cref{zp-torus} using the Pila--Zannier method (as described in \cite{CMPZ16}), although it does not appear to be sufficient to deduce \cref{zp-torus} using the argument of \cite[sec.~4]{BMZ99}.
\Cref{zp-torus} is about $C \cap \Sigma_2$ (and its generalisation to $C \cap \Sigma_1$ is false) so restriction~(2) does not matter for \cref{zp-torus}.
Thus, one could in principle deduce \cref{zp-torus}, subject to restrictions (3) and~(4), from \cref{lines-in-torus-height-bound-intro}.

\subsection*{Acknowledgements}

The author thanks Y. André, I. Cruciani, C. Daw, N. Dogra, G. Fowler, P. Habegger, G. Jones, R. Richard and D. Urbanik for useful discussions around the work presented in this paper.
This project was supported by the Engineering and Physical Sciences Research Council (EP/Y020758/1).

\section{Definitions: Heights and sizes, differential operators and G-functions} \label{sec:sizes}

In this section, we define notation and normalisations for various quantities related to absolute values and heights, as well as some notions around differential operators and G-functions.

\subsection{Absolute values and heights}

Let $K$ be a number field.

For each place $v$ of~$K$, let $K_v$ denote the $v$-adic completion of~$K$.
Let $p_v \in \ZZ_{>0} \cup \{ \infty \}$ denote the characteristic of the residue field if $v$ is non-archimedean, or $\infty$ if~$v$ is archimedean.
Let $\QQ_v = \QQ_{p_v}$ if $v$ is non-archimedean, or~$\RR$ if $v$ is non-archimedean.
Let $\abs{\cdot}_v$ denote the $v$-adic absolute value on~$K$, normalised so that it extends the usual $p_v$-adic absolute value on~$\QQ$.

Heights and related quantities are defined by summing over all places of~$K$.
With our normalisation of absolute values, the sum needs to be weighted by the local degrees.
In order to write this conveniently, we define the following notation.

\begin{definition}
If $V$ is a subset of the places of~$K$ and $a_v$ are real numbers indexed by~$V$, almost all zero, then write
\[ {\sum_{v \in V}}' a_v := \sum_{v \in V} \frac{[K_v:\QQ_v]}{[K:\QQ]} a_v. \]
We also write $\sum_v'$ without specifying the set~$V$ to denote the sum over all places of~$K$, still weighted by $[K_v:\QQ_v]/[K:\QQ]$.
\end{definition}

This paper normalises absolute values differently from \cite{And89}. As a result, the local quantities below are normalised differently from in \cite{And89}.
By using $\sum_v'$ with our normalisation instead of~$\sum_v$, we obtain the same global quantities as \cite{And89}.

\medskip

For $x \in \RR$, write $\log^+(x) =\log \max \{ 1, x \}$.

For $\alpha \in K$, let $h(\alpha)$ denote the absolute (logarithmic) Weil height:
\[ h(\alpha) = {\sum_v}' \log^+(\abs{\alpha}_v). \]
For $\alpha \in \Qbar$, $h(\alpha)$ is independent of the choice of number field~$K$ containing~$\alpha$.

Let $\Xi$ be a finite subset of~$K \setminus \{0\}$.
We define:
\begin{enumerate}
\item for each place $v$ of~$K$, $\sigma_v(\Xi) = \min \bigl( \{ 1 \} \cup \{ \abs{\alpha}_v : \alpha \in \Xi \} \bigr)$;
\item $\sigma(\Xi) = \sum_v' \log(1/\sigma_v(\Xi))$;
\item $H(\Xi) = \sum_{\alpha \in \Xi} h(\alpha)$.
\end{enumerate}
Observe that $\sigma(\Xi)$ is the usual (affine, logarithmic, absolute) Weil height of the vector formed by the elements of~$\Xi$.
The quantities $\sigma(\Xi)$ and $H(\Xi)$ depend only on~$\Xi \subset \Qbar \setminus \{0\}$, not on the number field~$K$ containing~$\Xi$.
Clearly,
\begin{equation} \label{eqn:sigma-H-comparison}
\sigma(\Xi) \leq H(\Xi) \leq \#\Xi \cdot \sigma(\Xi).
\end{equation}

\subsection{Power series}


Let $y_1, \dotsc, y_\numfuns \in \powerseries{K}{X}$, and write $y_i = \sum_{j=0}^\infty y_{ij}X^j$.
Following \cite{And89}, but using this paper's normalisation of absolute values, define:
\begin{enumerate}
\item for each place~$v$ of~$K$ and each $n \in \ZZ_{>0}$,
\[ h_{v,n}(y_1,\dotsc,y_\numfuns) = \frac{1}{n} \max \bigl\{ \log^+(\abs{y_{ij}}_v) : 1 \leq i \leq \numfuns, \, 0 \leq j \leq n \bigr\}; \]
\item for each place $v$ of~$K$, the \defterm{$v$-adic radius of convergence} of~$y_1,\dotsc,y_\numfuns$ is 
\[ R_v(y_1,\dotsc,y_\numfuns) = \min_{i=1,\dotsc,\numfuns} \liminf_{j \geq 1} \, \abs{y_{ij}}_v^{-1/j}; \]
\item the \defterm{global radius} of $y_1,\dotsc,y_\numfuns$ is
\[ \rho(y_1,\dotsc,y_\numfuns) = {\sum_v}' \log^+(1/R_v(y_1,\dotsc,y_\numfuns)) = {\sum_v}' \limsup_{n \geq 1} h_{v,n}(y_1,\dotsc,y_\numfuns); \] 
\item the \defterm{size} of $y_1,\dotsc,y_\numfuns$ is 
\[ \sigma(y_1,\dotsc,y_\numfuns) = \limsup_{n \geq 1} {\sum_v}' h_{v,n}(y_1,\dotsc,y_\numfuns). \] 
\end{enumerate}

Observe that $\rho(y_1,\dotsc,y_\numfuns)$ and $\sigma(y_1,\dotsc,y_\numfuns)$ are independent of the field~$K$ containing the coefficients of $y_1,\dotsc,y_\numfuns$.

We write $y_i^\van$ for the analytic function on the disc $D(0,R_v(y_i),K_v)$ induced by~$y_i$ via the tautological embedding $K \to K_v$.

\subsection{Differential operators and \texorpdfstring{$\partial$}{d}-modules}




In this article, a \defterm{differential operator} over a field~$K$ means an operator of the form
\[ \Lambda = \frac{d}{dX} - \Gamma, \]
where $\Gamma \in \rM_\numfunsabstract(K(X))$.


Let $A$ be a $K[X]$-algebra equipped with a derivation extending $d/dX$ on~$K[X]$.
(In practice, $A$ will be a ring of power series or Laurent series in~$X$, and the derivation on~$A$ will be the one ``naturally'' denoted $\frac{d}{dX}$.)
A \defterm{solution} of a differential operator $\Lambda = d/dX - \Gamma$ in~$A$ means a vector $\underline y \in A^\numfunsabstract$ such that
\[ \frac{d}{dX} \underline y = \Gamma \underline y. \]


It is sometimes useful to use the ``basis-independent'' language of $\partial$-modules.
Let $\partial$ denote the derivation $X \frac{d}{dX}$ on $K[X]$.
A \defterm{$\partial$-module} means a $K[X]$-module~$\cM$ equipped with an endomorphism, again denoted~$\partial$, satisfying the Leibniz property
\[ \partial(fy) = \partial(f)y + f\partial(y) \]
for all $f \in K[X]$ and $y \in \cM$.
(In this article, all the $\partial$-modules we will consider will be $K(X)$-vector spaces.
Since $X$ is invertible in~$K(X)$, there is an obvious equivalence between such $\partial$-modules and differential modules over $(K(X), d/dX)$.
We follow \cite{And89} in calling them $\partial$-modules instead of differential modules.)


Every differential operator~$\Lambda = d/dX - \Gamma$ gives rise to a $\partial$-module structure on~$K(X)^\mu$, where the action of~$\partial$ on the standard basis~$\{m_1,\dotsc,m_\numfunsabstract\}$ is defined by
\begin{equation} \label{eqn:partial-Gamma}
\partial m_j = \sum_{i=1}^\numfunsabstract X \Gamma_{ij} m_i.
\end{equation}
Conversely, given a $\partial$-module which is a $K(X)$-vector space, together with a $K(X)$-basis~$\{m_1,\dotsc,m_\numfunsabstract\}$, we obtain a differential operator~$\Lambda = d/dX - \Gamma$ by defining the entries of~$\Gamma$ via equation~\eqref{eqn:partial-Gamma}.

\subsection{Singularities of differential operators}

Let $\Lambda = d/dX - \Gamma$ be a differential operator over a field~$K$, where $\Gamma \in \rM_\mu(K(X))$.
An \defterm{ordinary point} of~$\Lambda$ is a point in~$\ov K$ where no entry of~$\Gamma$ has a pole.
A \defterm{finite singularity} of~$\Lambda$ is a point in~$\ov K$ where at least one entry of~$\Gamma$ has a pole.
An \defterm{apparent singularity} of~$\Lambda$ is a finite singularity $a \in \ov K$ such that the $\ov K$-vector space of solutions of~$\Lambda$ in~$\powerseries{\ov K}{X-a}$ has dimension~$\numfunsabstract$.
We write $\Sin(\Lambda)$ for the set of non-apparent finite singularities of~$\Lambda$.

\begin{remark}
Over~$\CC$, it is common to define (non-)apparent singularities in terms of holomorphic solutions of~$\Lambda$ at~$a$ rather than formal power series solutions.
These two definitions are equivalent, thanks to \cite[Thm.~2]{BM15} and Cauchy's theorem on the existence of solutions at an ordinary point of a differential equation.
\end{remark}

A \defterm{good apparent singularity} of~$\Lambda$ is an apparent singularity~$a \in \ov K$ such that the ``evaluation at~$a$'' map 
\[ \bigl\{ \text{solutions  of~$\Lambda$ in $\powerseries{\ov K}{X-a}$} \bigr\} \to \ov K^\numfunsabstract \]
is injective, hence bijective.
(This map is also bijective for ordinary points.)

\subsection{Size and radii of differential operators}

Let $K$ be a number field and let $\Lambda = d/dX - \Gamma$ be a differential operator over~$K$.
Define $\Gamma_{[j]} \in \rM_\numfunsabstract(K(X))$, for $j \in \ZZ_{\geq0}$, by the recurrence relation
\[ \Gamma_{[0]} = I, \quad \Gamma_{[j+1]} = \Gamma_{[j]}\Gamma + \frac{d}{dX} \Gamma_{[j]}. \]
These matrices have the property that, if $\underline y$ is a solution of~$\Lambda$, then
\[ \frac{d^j}{dX^j} \underline y = \Gamma_{[j]} \cdot \underline y \]
for all~$j$.
Note that $\Gamma_{[j]} = X^{-j}G_{[j]}$, where $G_{[j]}$ is defined by \cite[III, 1.4, (8)]{And89}.

Following \cite[IV]{And89}, we define:
\begin{enumerate}
\item for each place~$v$ of~$K$ and each $n \in \ZZ_{>0}$,
\[ h_{v,n}(\Lambda) =
\begin{cases}
    0 & \text {if $v$ is archimedean,}
\\ \frac{1}{n} \log^+ \max_{j \leq n} \lVert \Gamma_{[j]}/j! \rVert_v & \text{if $v$ is non-archimedean,}
\end{cases} \]
where $\lVert M \rVert_v$ denotes the maximum of the $v$-adic Gauss absolute values of the entries of~$M \in \rM_\numfunsabstract(K(X))$; 
\item for each place $v$ of~$K$,
\[ R_v(\Lambda) =
\begin{cases}
    1 & \text {if $v$ is archimedean,}
\\ \sup \{ r \leq 1 : \Lambda \text{ has a solution in } \cA(t_v,r) \} & \text{if $v$ is non-archimedean,}
\end{cases} \]
where $\cA(t_v,r)$ is the ring of analytic functions on the disc $D(t_v,r)$ around a $v$-adic generic point~$t_v$ satisfying $\abs{t_v - a}_v = 1$ for all $a \in D(0, 1, K_v)$; 
\item $\rho(\Lambda) = \sum_v' \log^+(1/R_v(\Lambda)) = \sum_v' \limsup_n h_{v,n}(\Lambda)$; 
\item $\sigma(\Lambda) = \limsup_n \sum_v' h_{v,n}(\Lambda)$. 
\end{enumerate}

As observed in \cite[IV, 3.3 and 4.1]{And89}, $R_v(\Lambda)$, $\rho(\Lambda)$ and $\sigma(\Lambda)$ depend only on the $\partial$-module~$\cM$ corresponding to~$\Lambda$, not on the choice of basis of~$\cM$.
We may denote them by $R_v(\cM)$, $\rho(\cM)$, $\sigma(\cM)$ respectively.
Also, $\rho(\Lambda)$ and $\sigma(\Lambda)$ are invariant under extension of the base field~$K$.

\subsection{G-functions} \label{ssec:g-functions-definition}

\begin{definition} \cite[p.~1]{And89}
A \defterm{G-function} is a power series $y(X) = \sum_{n \geq 0} a_nX^n$ with coefficients $a_n$ in a number field $K$,  which satisfies the following conditions:
\begin{enumerate}
\item there exists $\newC{G-function-arch-base}>0$ such that $\abs{a_n} < \refC{G-function-arch-base}^n$ for all $n$ and for all archimedean absolute values $\abs{\cdot}$ of~$K$;
\item there exists a sequence of positive integers $(d_n)$ which grows at most geometrically such that $d_na_m$ is an algebraic integer for all $m \leq n$;
\item $y(X)$ satisfies a linear homogeneous differential equation
\[ \frac{d^\numfunsabstract}{dX^\numfunsabstract}y + \gamma_{\numfunsabstract-1} \frac{d^{\numfunsabstract-1}}{dX^{\numfunsabstract-1}}y + \dotsb + \gamma_1 \frac{d}{dX}y + \gamma_0y = 0 \]
with coefficients $\gamma_i \in K(X)$.
\end{enumerate}
\end{definition}

Equivalently, a G-function is a power series $y \in \powerseries{\Qbar}{X}$ satisfying $\sigma(y) < \infty$ which solves a linear homogeneous differential equation over~$\Qbar(X)$ \cite[I, 1.3]{And89}.

\begin{remark}
The G-functions used in section~\ref{sec:torus-application} of this paper are of the form $\log(1+aX)$, where $a \in K^\times$.
Note that
\[ \log(1+aX) = \sum_{n \geq 1} \frac{(-1)^{n+1}a^n}{n}X^n \]
has coefficients in~$K$.
It satisfies (1) with $\refC{G-function-arch-base} = 2 \max_v \abs{a}_v$ where $v$ runs over the archimedean places of~$K$, it satisfies (2) with 
$d_n = \lcm(1,\dotsc,n) \cdot \denom(a)^n$,
and it satisfies (3) since
\[ \frac{\rd^2}{\rd X^2} \log(1+aX) = \frac{-a^2}{(1+aX)^2} = \frac{-a}{1+aX} \cdot \frac{\rd}{\rd X} \log(1+aX). \]
\end{remark}

\section{Relations between evaluations of G-functions} \label{sec:relations}

In this section, we define various kinds of polynomial ``relations'' between evaluations of G-functions, and prove some properties of these notions.
Throughout the section, $y_1,\dotsc,y_\numfuns \in \powerseries{\Qbar}{X}$ denote power series whose coefficients generate a finite extension of~$\QQ$ (in practice, they will be G-functions), $\xi$ denotes an element of $\Qbar$, and $\relation$ denotes a homogeneous polynomial in $\Qbar[Y_1,\dotsc,Y_\numfuns]$.

\subsection{Functional and trivial relations}

We begin with a notion which appears in \cite{And89} and \cite{Urb25}, but is not given a name there.

\begin{definition}
Let $\tilde\relation \in \Qbar[X][Y_1,\dotsc,Y_\numfuns]$ be a polynomial which is homogeneous with respect to~$Y_1,\dotsc,Y_\numfuns$.
We say that $\tilde\relation$ is a \defterm{functional relation between $y_1, \dotsc, y_\numfuns$} if the following relation holds in $\powerseries{\Qbar}{X}$:
\[ \tilde \relation(X)(y_1(X), \dotsc, y_\numfuns(X)) = 0. \]
\end{definition}

The following definition of ``non-trivial'' is from \cite[VII, 5.1]{And89}.

\begin{definition}
Let $\relation \in \Qbar[Y_1,\dotsc,Y_\numfuns]$ be a homogeneous polynomial.
We say that:
\begin{enumerate}
\item $\relation$ is a \defterm{trivial relation between $y_1, \dotsc, y_\numfuns$ at~$\xi$} if it is the specialisation at $X=\xi$ of a functional relation (in $\Qbar[X][Y_1, \dotsc, Y_\numfuns]$) between $y_1, \dotsc, y_\numfuns$.
\item $\relation$ is \defterm{non-trivial as a relation between $y_1, \dotsc,y_\numfuns$ at~$\xi$} if it is not a trivial relation at~$\xi$.
\end{enumerate}
\end{definition}

The following definition is from \cite[\S 12, p.~63]{Bom81} and \cite[VII, 4.2]{And89}.

\begin{definition}
Let $\relation \in \Qbar[Y_1,\dotsc,Y_\numfuns]$ be a homogeneous polynomial.
Let $V \subset \PP^{\numfuns-1}_{\Qbar(X)}$ denote the variety cut out by the functional relations between $y_1,\dotsc,y_\numfuns$.
Let $W \subset \PP^{\numfuns-1}_{\Qbar}$ denote the variety cut out by the trivial relations at~$\xi$, together with~$\relation$.
We say that $\relation$ is \defterm{strongly non-trivial as a relation between $y_1,\dotsc,y_\numfuns$ at~$\xi$} if
\[ \dim_{\Qbar}(W) < \dim_{\Qbar(X)}(V). \]
\end{definition}

\begin{remark} \label{strongly-non-trivial-def-errors}
The definition of ``strongly non-trivial'' in \cite[VII, 5.1]{And89} appears to be erroneous. That definition is: ``$\relation$ does not occur as a factor of the specialisation at~$\xi$ of an irreducible functional relation between $y_1,\dotsc,y_\numfuns$''.
However, consider a situation in which the functional relations are generated by a single polynomial~$\tilde R$.
Then $\tilde R$ is the only irreducible functional relation between $y_1,\dotsc,y_\numfuns$ (up to a constant factor), so every homogeneous polynomial of degree greater than $\deg(\tilde R)$ would be strongly non-trivial by this definition, which is clearly not what is intended.

One could modify the definition in \cite[VII, 5.1]{And89} by dropping the word ``irreducible'', yielding:
``$\relation$ does not occur as a factor of the specialisation at~$\xi$ of a non-zero functional relation between $y_1,\dotsc,y_\numfuns$''.
This still does not do what one wants because, as long as there exists at least one non-zero functional relation~$\tilde R$, all homogeneous polynomials would be strongly non-trivial by this definition (each polynomial~$\relation$ would be a factor of the specialisation of the functional relation~~$\relation \tilde R$).
\end{remark}

The following characterisation of strongly non-trivial relations is sometimes more convenient than the definition above.

\begin{lemma} \label{strongly-non-trivial-equivalent-defs}
Let $\relation \in \Qbar[Y_1,\dotsc,Y_\numfuns]$ be a homogeneous polynomial.
Let $V_\xi \subset \PP^{\numfuns-1}_{\Qbar}$ denote the variety cut out by the trivial relations between $y_1,\dotsc,y_\numfuns$ at~$\xi$.
Then $\relation$ is strongly non-trivial at~$\xi$ if and only if it does not vanish identically on any irreducible component of~$V_\xi$.

(Equivalently, $\relation$ is strongly non-trivial at~$\xi$ if and only if it is not in any minimal prime ideal associated with the ideal generated by trivial relations at~$\xi$.)
\end{lemma}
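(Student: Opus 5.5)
The plan is to show that $V_\xi$ is equidimensional of dimension $\dim_{\Qbar(X)}(V)$. The lemma then follows by elementary dimension theory. Write $d = \dim_{\Qbar(X)}(V)$.

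Assuming equidimensionality, the argument is short. Let $W = V_\xi \cap Z(\relation)$, where $Z(\relation)$ is the hypersurface $\relation = 0$. If $\relation$ vanishes identically on some irreducible component $C$ of $V_\xi$, then $C \subset W$, so $\dim W \geq \dim C = d$ and $\relation$ is not strongly non-trivial. Conversely, suppose $\relation$ vanishes identically on no component. Then for each component $C$, the set $C \cap Z(\relation)$ is a proper closed subset of the irreducible variety $C$, so it has dimension $< \dim C = d$. Taking the union over the finitely many components gives $\dim W < d$. The parenthetical reformulation is the usual dictionary between irreducible components of $V_\xi$ and minimal primes over the ideal of trivial relations at $\xi$.

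For the equidimensionality, let $I \subset \Qbar[X][Y_1,\dotsc,Y_\numfuns]$ be the ideal of functional relations. It is the kernel of the map $\Qbar[X][Y] \to \powerseries{\Qbar}{X}[T]$ sending $Y_i \mapsto y_i T$, which separates the homogeneous components; since the target is a domain, $I$ is a homogeneous prime ideal. It is also saturated with respect to $X-\xi$: if $(X-\xi)f \in I$, then $f \in I$, because $X-\xi$ is a nonzerodivisor in the target. (When $\xi = 0$, $X$ is still a nonzerodivisor in $\powerseries{\Qbar}{X}$.) Let $\mathcal{Z} \subset \PP^{\numfuns-1} \times \AAA^1$ be the closed subscheme defined by $I$. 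Then $\mathcal{Z}$ is integral with generic fibre $V$, so $\dim \mathcal{Z} = d+1$. Moreover, $\mathcal{Z}$ is flat over $\AAA^1$, since its coordinate ring is torsion-free over the PID $\Qbar[X]$. By definition the trivial relations at $\xi$ are exactly the images of $I$ under $X \mapsto \xi$, so $V_\xi$ is (set-theoretically) the fibre $\mathcal{Z}_\xi$.

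The last step is to show that every component of $\mathcal{Z}_\xi$ has dimension exactly $d$.

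\begin{itemize}
\item \emph{Nonemptiness.} The morphism $\mathcal{Z} \to \AAA^1$ is projective, hence closed, and dominant, hence surjective, so $\mathcal{Z}_\xi \neq \emptyset$. Here we assume $V \neq \emptyset$, i.e.\ not all $y_i$ vanish; otherwise both sides are trivially empty.
\item \emph{Lower bound.} The fibre is cut out in $\mathcal{Z}$ by the single equation $X - \xi$. Applying Krull's principal ideal theorem on affine charts of the irreducible variety $\mathcal{Z}$, every component of $\mathcal{Z}_\xi$ has dimension $\geq d$.
\item \emph{Upper bound.} Each component is a proper closed subset of $\mathcal{Z}$, so it has dimension $\leq d$.
\end{itemize}

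I expect the main obstacle to be this equidimensionality step. It requires correctly identifying $V_\xi$ with the fibre of a flat family, which rests on $I$ being prime and $(X-\xi)$-saturated. The remaining dimension arguments are routine.
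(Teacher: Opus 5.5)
Your proposal is correct and follows essentially the same route as the paper. Both arguments realise $V_\xi$ as the fibre over $\xi$ of the integral scheme in $\AAA^1\times\PP^{\numfuns-1}$ cut out by the prime ideal of functional relations, deduce that every irreducible component of $V_\xi$ has dimension $\dim_{\Qbar(X)}(V)$, and then compare $W=V_\xi\cap Z(\relation)$ with $V_\xi$ component by component. The differences are only in the justifications: the paper gets primality from a lemma on homogeneous parts of prime ideals and the fibre dimension by citing Liu's results on flat morphisms to a Dedekind base, whereas you use the graded map to $\powerseries{\Qbar}{X}[T]$ and a direct argument via surjectivity, Krull's principal ideal theorem and $\mathcal{Z}_\xi\subsetneq\mathcal{Z}$ (so your flatness remark is not actually needed).
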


\begin{proof}
Let $\tilde I \subset \Qbar[X][Y_1, \dotsc, Y_\numfuns]$ denote the ideal generated by functional relations.
Let $\tilde V \subset \AAA^1_{\Qbar} \times \PP^{\numfuns-1}_{\Qbar}$ denote the closed subscheme cut out by~$\tilde I$, where $\AAA^1$ corresponds to the indeterminate $X$ and $\PP^{\numfuns-1}$ to~$Y_1,\dotsc,Y_\numfuns$.
Now, $V$ (from the definition of strongly non-trivial relation) and $V_\xi$ (from the statement of the lemma) are the fibres of the projection $\tilde V \to \AAA^1_{\Qbar}$ over the generic point and over~$\xi$, respectively.

By \cref{functional-relations-ideal-is-prime} below, $\tilde V$ is integral.
Hence $\tilde V \to \AAA^1_{\Qbar}$ is flat, by \cite[Cor.~4.3.10]{Liu06}.
It follows that every irreducible component of $V_\xi$ has dimension $\dim(\tilde V) - \dim(\AAA^1)$, and the same is true for the generic fibre~$V$ \cite[Cor.~4.3.14]{Liu06}. 

Thus, every component of~$V_\xi$ has dimension (as a $\Qbar$-variety) equal to $\dim_{\Qbar(X)}(V)$.
It follows that $\dim_{\Qbar}(W) < \dim_{\Qbar(X)}(V)$ if and only if $W \cap C$ is strictly contained in~$C$, for every irreducible component $C$ of~$V_\xi$.
This is equivalent to saying that $\relation$ does not vanish identically on any irreducible component of~$V_\xi$, as claimed.
\end{proof}

The notion of ``strongly non-trivial relation'' is what Bombieri and André needed to obtain inexplicit height bounds, using the asymptotic behaviour of the Hilbert function.
In order to obtain explicit height bounds of the same type, this is not sufficient -- we require explicit bounds on individual values of the Hilbert function, as well as the following stronger condition on relations.

\begin{definition}
Let $\relation \in \Qbar[Y_1,\dotsc,Y_\numfuns]$ be a homogeneous polynomial.
We say that $\relation$ is \defterm{super-strongly non-trivial as a relation between $y_1,\dotsc,y_\numfuns$ at~$\xi$} if there does not exist any homogeneous polynomial $\relfactor \in \Qbar[Y_1,\dotsc,Y_\numfuns]$ such that $PQ$ is trivial at~$\xi$, but $\relfactor$ is non-trivial at~$\xi$.
\end{definition}

The next two lemmas show that ``super-strongly non-trivial'' is indeed strictly stronger than ``strongly non-trivial''.

\begin{lemma}
If $\relation$ is super-strongly non-trivial, then it is strongly non-trivial.
\end{lemma}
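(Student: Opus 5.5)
We argue by contraposition, using the characterisation in \cref{strongly-non-trivial-equivalent-defs}. Suppose $\relation$ is not strongly non-trivial at~$\xi$. Then $\relation$ vanishes identically on some irreducible component $C$ of~$V_\xi$. Equivalently, writing $I_\xi \subset \Qbar[Y_1,\dotsc,Y_\numfuns]$ for the (homogeneous) ideal generated by trivial relations at~$\xi$, $\relation$ lies in some minimal prime $\fP$ of~$I_\xi$. We will construct a homogeneous $\relfactor$ such that $\relation\relfactor$ is trivial at~$\xi$ but $\relfactor$ is not, which contradicts super-strong non-triviality.

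The construction uses primary decomposition. Write $I_\xi = \fQ_1 \cap \dotsb \cap \fQ_r$, a minimal primary decomposition with homogeneous primary components (possible since $I_\xi$ is homogeneous). Arrange it so that $\fQ_1$ is $\fP$-primary. Because $\fP$ is minimal, no other associated prime is contained in~$\fP$. By prime avoidance, together with the fact that the decomposition is irredundant, the ideal $J := \fQ_2 \cap \dotsb \cap \fQ_r$ is not contained in~$\fQ_1$; if $r=1$, take $J$ to be the unit ideal. Since $\fQ_1$ is $\fP$-primary, some power $\fP^N \subseteq \fQ_1$, so $\relation^N \in \fQ_1$.

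Now choose a homogeneous element $g \in J \setminus \fQ_1$. This is possible since $J$ and $\fQ_1$ are homogeneous and $J \not\subseteq \fQ_1$, so some homogeneous component of an element of $J\setminus\fQ_1$ works. Let $k \geq 0$ be minimal such that $\relation^k g \in \fQ_1$; such $k$ exists and satisfies $1 \le k \le N$ because $g \notin \fQ_1$. Set $\relfactor := \relation^{k-1} g$, which is homogeneous. Then $\relation\relfactor = \relation^k g$ lies in $\fQ_1 \cap J = I_\xi$. Also $\relfactor \notin \fQ_1$, so $\relfactor \notin I_\xi$. If $r=1$, we simply take $g=1$.

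The remaining step, which I expect to be the only subtle point, is to translate ``lies in $I_\xi$'' into ``is a trivial relation at~$\xi$''. The set of trivial relations at~$\xi$ is the image of the ideal $\tilde I$ of functional relations under the specialisation $X\mapsto\xi$. That image is itself an ideal of $\Qbar[Y_1,\dotsc,Y_\numfuns]$, since specialisation is a surjective ring homomorphism and $\tilde I$ is an ideal. Its homogeneous elements are exactly the homogeneous trivial relations, because $\tilde I$ is generated by $Y$-homogeneous elements. Hence the set of trivial relations equals $I_\xi$. It follows that $\relation\relfactor$ is trivial while $\relfactor$ is non-trivial, so $\relation$ is not super-strongly non-trivial, completing the contrapositive.
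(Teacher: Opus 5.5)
Your proof is correct, but it takes a different and shorter route than the paper's.

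\textbf{The paper's route.} It argues directly. Assuming $\relation$ is super-strongly non-trivial, it takes a primary decomposition of $I_\xi$ into minimal components $\fP_i$ and embedded components $\fQ_j$. It then uses prime avoidance to build auxiliary elements $g_i$ with prescribed membership in \emph{all} components, and sets $g=\prod_{i:\relation\notin\sqrt{\fP_i}}g_i$ and $\relation'=\relation g$. The hypothesis is applied once, to $\relfactor=(\relation')^{n-1}g$, where $n$ is minimal with $(\relation')^n\in I_\xi$. Finally it splits into cases: $n\geq 2$ gives a contradiction, and $n=1$ puts $g$ in every $\fP_i$, which forces $\relation\notin\sqrt{\fP_i}$ for all $i$.

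\textbf{Your route.} You argue by contraposition and use only the one component $\fQ_1$ whose radical is a minimal prime containing $\relation$, together with the intersection $J$ of the other components. Irredundancy gives a homogeneous $g\in J\setminus\fQ_1$. The minimal $k$ with $\relation^k g\in\fQ_1$ then produces the witness $\relfactor=\relation^{k-1}g$ immediately. This is essentially the standard fact that elements of associated primes are zero-divisors.

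In fact you never use minimality of $\fP$, so your argument proves more. It shows that a super-strongly non-trivial $\relation$ avoids every associated prime of $I_\xi$, embedded ones included. Conversely, such a $\relation$ is a non-zero-divisor modulo $I_\xi$ and hence super-strongly non-trivial. So super-strong non-triviality is exactly avoidance of all associated primes, while strong non-triviality only asks this for the minimal ones. This makes transparent why the converse fails when $I_\xi$ has embedded components, as in the paper's next lemma.

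\textbf{What each buys.} Your route needs no prime avoidance, and in particular no graded version to keep the $g_i$ homogeneous, a point the paper passes over. It also needs no case analysis. You explicitly justify identifying trivial relations with homogeneous elements of $I_\xi$, which the paper uses tacitly. The paper's version has only the modest merit of being a direct argument.

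Two cosmetic points. First, $J\not\subseteq\fQ_1$ follows from irredundancy alone, so prime avoidance is not needed there. Second, your final sentence should say that the trivial relations are exactly the \emph{homogeneous} elements of $I_\xi$, not all of $I_\xi$.
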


\begin{proof}
Let $I_\xi \subset \Qbar[Y_1,\dotsc,Y_\numfuns]$ denote the ideal generated by trivial relations at~$\xi$.
Let an irredundant primary decomposition of~$I_\xi$ be
\[ I_\xi = \bigcap_{i=1}^r \fP_i \cap \bigcap_{j=1}^s \fQ_j, \]
where $\sqrt{\mathstrut\fP_i}$ are minimal primes over~$I_\xi$, while $\sqrt{\fQ_j}$ are embedded primes.

\pagebreak 

For each $i=1,\dotsc,r$, use prime avoidance to choose $g_i$ such that:
\begin{enumerate}
\item $g_i \in \fP_i$,
\item $g_i \notin \sqrt{\mathstrut\fP_{i'}}$ for every $i' \neq i$,
\item $g_i \in \fQ_j$ for every~$j$ such that $\sqrt{\mathstrut\fP_i} \subset \sqrt{\fQ_j}$,
\item $g_i \notin \sqrt{\fQ_j}$ for every~$j$ not covered by~(3).
\end{enumerate}

Let
\[ g = \prod_{i : \relation \notin \sqrt{\mathstrut\fP_i}} g_i \]
and let $\relation' = \relation g$.
It follows from properties (1)--(4) of~$g_i$ that:
\begin{enumerate}
\item $g \in \fP_i$ for all $i$ such that $\relation \notin \sqrt{\mathstrut\fP_i}$;
\item $g \notin \sqrt{\mathstrut\fP_{i'}}$ for every $i'$ such that $\relation \in \sqrt{\mathstrut\fP_{i'}}$;
\item $g \in \fQ_j$ for every $j$ such that there exists an~$i$ satisfying $\relation \notin \sqrt{\mathstrut\fP_i} \subset \sqrt{\fQ_j}$;
\item $g \notin \sqrt{\fQ_j}$ for every~$j$ not covered by~(3).
\end{enumerate}

We have $\relation' \in \sqrt{\mathstrut\fP_i}$ for all~$i$, so $\relation' \in \sqrt{I_\xi}$.
Consequently, there exists $n \in \ZZ_{\geq 1}$ such that $(\relation')^n \in I_\xi$.
Choose the minimal such $n \in \ZZ_{\geq 1}$.
Let
\[ \relfactor = (\relation')^{n-1}g. \]
By the choice of~$n$, we have that $PQ \in I_\xi$.
Since $\relation$ is super-strongly non-trivial, it follows that $\relfactor \in I_\xi$.

Properties (1)--(4) of~$g$ imply that each~$\fP_i$ contains either $g$ or~$(\relation')^{n-1}$, and the same for each~$\fQ_j$.

If $n \geq 2$, then we have $g \mid \relation' \mid (\relation')^{n-1}$, so we obtain that $(\relation')^{n-1}$ is in every $\fP_i$ and every~$\fQ_j$.
It follows that $(\relation')^{n-1} \in I_\xi$, contradicting the minimality of~$n$.

Otherwise, $n=1$.
In this case, $(\relation')^{n-1}=1 \mid g$, so we obtain that $g$ is in every $\fP_i$.
Thanks to property~(3) of~$g$, it follows that $\relation \notin \sqrt{\mathstrut\fP_i}$ for every~$i$.
By \cref{strongly-non-trivial-equivalent-defs}, this implies that $\relation$ is strongly non-trivial.
\end{proof}

\begin{lemma}
Not every strongly non-trivial relation is super-strongly non-trivial.
\end{lemma}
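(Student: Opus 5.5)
The plan is to exhibit an explicit example. We need power series $y_1,\dotsc,y_\numfuns$ (which can even be polynomials in~$X$ times powers of one transcendental G-function), a point~$\xi$, and a homogeneous~$\relation$ with two properties. First, $\relation$ lies in no minimal prime of the ideal $I_\xi$ of trivial relations at~$\xi$, so it is strongly non-trivial by \cref{strongly-non-trivial-equivalent-defs}. Second, $\relation$ is a zero divisor modulo~$I_\xi$, i.e.\ it lies in an embedded associated prime. In that case there is $\relfactor \notin I_\xi$ with $\relation\relfactor \in I_\xi$. Such a $\relfactor$ is non-trivial while $\relation\relfactor$ is trivial, so $\relation$ is not super-strongly non-trivial. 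Thus everything reduces to producing an $I_\xi$ that has an embedded component.

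First I will identify $I_\xi$ geometrically. Let $\tilde I \subset \Qbar[X][Y_1,\dotsc,Y_\numfuns]$ be the ideal of functional relations. It is the kernel of the homogenised evaluation map $Y_i \mapsto t\,y_i(X)$ into the domain $\powerseries{\Qbar}{X}[t]$, so it is prime. By definition, $I_\xi$ is the image of~$\tilde I$ under $X \mapsto \xi$. Since $\Qbar[X][Y]/\tilde I$ is torsion-free over $\Qbar[X]$, $\tilde V$ is the flat closure of its generic fibre, and $I_\xi$ is exactly the ideal of the flat limit at $X=\xi$ (not its saturation or radical).

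So it suffices to realise a classical flat degeneration whose special fibre has an embedded point. I will use Hartshorne's example (III, 9.8.4): twisted cubics in~$\PP^3$ degenerating, via projection, to a plane nodal cubic with an embedded point at the node. Concretely, take $\numfuns = 4$, let $f$ be a transcendental G-function (e.g.\ $f = \log(1+X)$), and choose $y_1,\dotsc,y_4$ to be polynomials in $X$ and $f$ parametrising Hartshorne's family, with the parameter $t$ replaced by~$f$ and the family parameter replaced by~$X$. Since $f$ is transcendental over $\Qbar(X)$, the ideal $\tilde I$ equals the ideal of that algebraic family over $\Qbar[X]$. Its fibre at $\xi=0$ is then Hartshorne's ideal $(z^2, yz, xz, y^2 - x^2(x+w))$. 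Its minimal prime is $(z,\, y^2 - x^2(x+w))$, and it has an embedded prime $(x,y,z)$.

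For~$\relation$ I take a linear form nonvanishing on the nodal cubic but lying in the embedded prime; $\relation = x$ works (in the corresponding $Y$-coordinates). For~$\relfactor$ I take $z$. Then $xz \in I_0$ but $z \notin I_0$, while $x$ is not in the minimal prime. The main obstacle is the bookkeeping: I must check that the parametrisation really gives Hartshorne's family over $\Qbar[X]$, with the degeneration at $X=0$ and all entries genuinely G-functions (polynomials in $X$ and~$\log(1+X)$ are G-functions). I must also confirm $z \notin I_0$, which is a short computation in degree one.
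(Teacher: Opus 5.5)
Your proposal is correct, and it uses a different example from the paper's.

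The paper works with a degeneration of the rational quartic. It takes $y = (1, (1+X)^{3/4}\log(1+X), (1+X)^{1/4}\log(1+X)^3, \log(1+X)^4)$ at $\xi=-1$ and writes down generators of the trivial relations there. These cut out the line $\{[s:0:0:t]\}$, so $Y_4$ is strongly non-trivial. It then checks that $Y_2^2Y_4$ is trivial while $Y_2^2$ is not, the latter by listing the degree-$2$ trivial relations.

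You instead name the mechanism. Strong non-triviality says $P$ avoids the minimal primes of $I_\xi$. Failure of super-strong non-triviality says $P$ is a zero-divisor modulo $I_\xi$. So any specialisation with an embedded associated prime gives an example, and Hartshorne's degeneration supplies one.

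Made concrete, your data are $(y_1,y_2,y_3,y_4) = (1, f^2-1, f^3-f, Xf)$ with $f=\log(1+X)$, $\xi=0$, $P = Y_2$ and $Q = Y_4$. The checks are lighter than in the paper:
\begin{enumerate}
\item $Y_2Y_4 - XY_1Y_3$ is a functional relation, so $PQ$ is trivial at $0$.
\item There are no linear functional relations, because $f$ is transcendental over $\Qbar(X)$. Hence $Y_4$ is non-trivial.
\item $V_0$ is the nodal cubic. The specialisations of $Y_4^2 - X^2Y_1(Y_1+Y_2)$ and $Y_1Y_3^2 - Y_2^2(Y_1+Y_2)$ lie in $I_0$. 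Conversely, again by transcendence of $f$, every element of $I_0$ vanishes on $T \mapsto [1 : T^2-1 : T^3-T : 0]$.
\end{enumerate}
So you never need the full equality of $I_0$ with Hartshorne's ideal. (It does hold, since both have Hilbert function $3d+1$ in each degree $d \geq 1$.) The paper's route, by contrast, must control all trivial relations of degree~$2$, which it does through its list of generators.

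What each buys: the paper's example is self-contained with a fully explicit ideal. Yours explains exactly where the two notions differ, namely embedded components, and gives a recipe for further examples, with a linear witness $Q$.

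Two small fixes:
\begin{enumerate}
\item The cubic must be homogenised, $y^2w - x^2(x+w)$ rather than $y^2 - x^2(x+w)$, both in $I_0$ and in the minimal prime.
\item $\xi = 0$ is allowed by the definitions in this section. If you prefer a non-zero point, replacing $Xf$ by $(X-c)f$ moves the degeneration to any $c \neq 0$.
\end{enumerate}
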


\begin{proof}
The example will be given by the set of G-functions
\begin{align*}
    y_1 = 1,
\quad y_2 &= (1+X)^{3/4} \log(1+X),
\\ y_3 &= (1+X)^{1/4} \log(1+X)^3,
\quad y_4 = \log(1+X)^4.
\end{align*}

In order to describe the functional relations between these, consider first
\[ y_1' = 1+X, \; y_2, \; y_3, \; y_4. \]
Since $(1+X)^{1/4}$ and $\log(1+X)$ are algebraically independent, the ideal of $\Qbar$-relations between $y_1', y_2, y_3, y_4$ is the same as the ideal of the projective variety
\[ \{ [s^4 : s^3t : st^3 : t^4] \in \PP^3_{\Qbar} : [s,t] \in \PP^1 \}. \] 
This variety is a rational quartic curve, and its ideal is known to be generated by
\[ Y_1'Y_4 - Y_2Y_3, \quad Y_1'^2Y_3 - Y_2^3, \quad Y_2Y_4^2 - Y_3^3, \quad Y_1'Y_3^2 - Y_2^2Y_4. \]
To obtain generators for the ideal of $\Qbar[X]$-relations between $y_2$, $y_3$ and~$y_4$, simply substitute $1+X$ for $Y_1'$.
Since $y_1=1$, we obtain generators for the functional relations between $y_1$, $y_2$, $y_3$ and~$y_4$ by homogenising the resulting polynomials with respect to $Y_1,Y_2,Y_3,Y_4$, by inserting powers of~$Y_1$.
This yields
\[ (1+X)Y_1Y_4 - Y_2Y_3, \quad (1+X)^2Y_1^2Y_3 - Y_2^3, \quad Y_2Y_4^2 - Y_3^3, \quad (1+X)Y_1Y_3^2 - Y_2^2Y_4. \]

Consequently, the trivial relations between $y_1,y_2,y_3,y_4$ at~$-1$ are generated by
\begin{equation} \label{eqn:trivial-generators}
-Y_2Y_3, \quad -Y_2^3, \quad Y_2Y_4^2 - Y_3^3, \quad -Y_2^2Y_4.
\end{equation}
The projective variety cut out by~\eqref{eqn:trivial-generators} is $\{ [s:0:0:t] \} \subset \PP^3$.
This is irreducible, and~$Y_4$ does not vanish identically on it, so $Y_4$ is strongly non-trivial at~$-1$.

From~\eqref{eqn:trivial-generators}, $Y_2^2Y_4$ is a trivial relation at~$-1$.
However, thanks to~\eqref{eqn:trivial-generators}, the only trivial relations of degree~$2$ are the scalar multiples of~$Y_2Y_3$.
Therefore, $Y_2^2$ is non-trivial at~$-1$.
Thus, $Y_4$ is not super-strongly non-trivial at~$-1$.
\end{proof}

\subsection{Primality of the ideals generated by functional and trivial relations}

\begin{lemma} \cite[p.~61]{Bom81} \label{functional-relations-ideal-is-prime}
The ideal of $\Qbar[X][Y_1,\dotsc,Y_\numfuns]$ generated by the functional relations is prime.
\end{lemma}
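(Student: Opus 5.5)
The plan is to identify the ideal of functional relations with the kernel of a ring homomorphism into an integral domain. Consider the $\Qbar[X]$-algebra homomorphism
\[ \phi \colon \Qbar[X][Y_1,\dotsc,Y_\numfuns] \to \powerseries{\Qbar}{X}, \qquad Y_i \mapsto y_i(X). \]
The target is an integral domain, so $\ker\phi$ is a prime ideal. It then remains to compare the ideal $\tilde I$ generated by the functional relations, which by definition are homogeneous in $Y_1,\dotsc,Y_\numfuns$, with $\ker\phi$. These need not coincide: for instance $Y_1-1$ lies in $\ker\phi$ when $y_1=1$, but it is not homogeneous. So the right object is the largest homogeneous ideal contained in $\ker\phi$, namely the ideal generated by the homogeneous elements of $\ker\phi$. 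This is exactly $\tilde I$, since the homogeneous elements of $\ker\phi$ are precisely the functional relations.

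The key step is therefore the standard graded-algebra fact: if $\mathfrak p$ is a prime ideal in a graded ring, then the ideal $\mathfrak p^*$ generated by all homogeneous elements of $\mathfrak p$ is again prime. Here the grading is by total degree in $Y_1,\dotsc,Y_\numfuns$, with $\Qbar[X]$ in degree $0$. The argument runs as follows. Note first that $\mathfrak p^*$ is a homogeneous ideal, and $1\notin\mathfrak p^*$. By the usual criterion for homogeneous ideals, it suffices to check primality on homogeneous elements. So suppose $a,b\notin\mathfrak p^*$, and take their lowest-degree homogeneous components not lying in $\mathfrak p^*$, say $a_i$ and $b_j$ after subtracting the lower components that do lie in $\mathfrak p^*$. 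Then the degree-$(i+j)$ component of $ab$ is congruent to $a_ib_j$ modulo $\mathfrak p^*$. For homogeneous $a_i,b_j$ we have $a_ib_j\in\mathfrak p^*$ if and only if $a_ib_j\in\mathfrak p$, because a homogeneous element lies in $\mathfrak p^*$ exactly when it lies in $\mathfrak p$. Primality of $\mathfrak p$ then gives $a_i\in\mathfrak p$ or $b_j\in\mathfrak p$, and hence, being homogeneous, $a_i\in\mathfrak p^*$ or $b_j\in\mathfrak p^*$, which is a contradiction.

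Applying this with $\mathfrak p=\ker\phi$ shows that $\tilde I=(\ker\phi)^*$ is prime, which is the statement. The only genuine point needing care is the identification of $\tilde I$ with $(\ker\phi)^*$; the rest is routine. One could alternatively argue directly with homogeneous elements: if $PQ$ is a product of homogeneous polynomials with $PQ(X)(y)=0$ in the domain $\powerseries{\Qbar}{X}$, then one of $P(y)$ or $Q(y)$ vanishes. This is the heart of the matter, and I expect no serious obstacle, since the power series ring over a field is an integral domain.
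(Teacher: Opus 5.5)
Your proposal is correct and follows the same route as the paper. The paper also takes the kernel of $Y_i \mapsto y_i$ into $\powerseries{\Qbar}{X}$, observes that it is prime because the target is a domain, and concludes via the graded-ring fact that the ideal generated by the homogeneous elements of a prime is prime. The only difference is that the paper cites this fact from the Stacks Project (Lemma~10.57.7), whereas you prove it directly.
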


\begin{proof}
Let $I$ denote the kernel of the homomorphism $\Qbar[X][Y_1,\dotsc,Y_\numfuns] \to \powerseries{\Qbar}{X}$ defined by $Y_i \mapsto y_i$.
Since $\powerseries{\Qbar}{X}$ is an integral domain, $I$ is a prime ideal.
The lemma follows by \cite[00JT, Lemma~10.57.7]{stacks}.
\end{proof}

More importantly for applications, the ideal generated by the trivial relations is often prime.
Indeed, the sufficient condition of the following lemma is often satisfied in applications
(for example, the lemma could replace the considerable effort employed for a special case in \cite[5.E]{DO:LGO-PEL}).
This is useful for two reasons:
\begin{enumerate}
\item If the ideal generated by trivial relations is prime, then non-trivial relations are automatically strongly non-trivial and super-strongly non-trivial.
\item In applications, one often constructs ``$v$-adic'' relations for different places and takes their product to get a ``global'' relation.
If the ideal generated by trivial relations is prime, then showing that the factors are non-trivial suffices to show that their product is also non-trivial.
\end{enumerate}

\begin{lemma}
Let $\xi \in \Qbar$.
If the functional relations are generated by elements of $\Qbar[Y_1,\dotsc,Y_\numfuns]$ (that is, by functional relations which do not depend on~$X$), then the ideal of $\Qbar[Y_1,\dotsc,Y_\numfuns]$ generated by the trivial relations at~$\xi$ is prime.
\end{lemma}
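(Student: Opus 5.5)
Let $\tilde I \subset \Qbar[X][Y_1,\dotsc,Y_\numfuns]$ be the ideal generated by functional relations, and let $J = \tilde I \cap \Qbar[Y_1,\dotsc,Y_\numfuns]$ be the ideal of functional relations not depending on~$X$. By hypothesis $\tilde I = J \cdot \Qbar[X][Y_1,\dotsc,Y_\numfuns]$. I would prove two facts: first, that the ideal $I_\xi$ generated by trivial relations at~$\xi$ equals~$J$; second, that $J$ is prime.

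For the first fact, I would compare specialisations. Every element of $\tilde I$ can be written as $\sum_k a_k(X) f_k$ with $f_k \in J$ and $a_k \in \Qbar[X]$. Its specialisation at $X=\xi$ is therefore $\sum_k a_k(\xi) f_k \in J$. Conversely, each $f \in J$ is itself a functional relation, and it specialises to itself. Hence the set of trivial relations at~$\xi$ is exactly the set of homogeneous elements of~$J$. Since $J$ is generated by homogeneous elements (the functional relations are homogeneous in the $Y_i$ by definition), it follows that $I_\xi = J$, independently of~$\xi$.

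For the second fact, note that $J$ is the contraction to $\Qbar[Y_1,\dotsc,Y_\numfuns]$ of $\tilde I$. By \cref{functional-relations-ideal-is-prime}, $\tilde I$ is prime, and the contraction of a prime ideal along a ring homomorphism is prime. So it remains only to check that $J$ really is the contraction, namely $\tilde I \cap \Qbar[Y] = J$. This holds by definition: an element of $\tilde I$ lying in $\Qbar[Y]$ is a functional relation not depending on~$X$, at least once homogeneous components are accounted for. To handle that point, I would argue directly. If $P \in \Qbar[Y]$ lies in~$\tilde I$, then $P(y_1,\dotsc,y_\numfuns)=0$ in $\powerseries{\Qbar}{X}$, because $\tilde I$ is contained in the kernel of the evaluation map.

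The main subtlety is this homogeneity bookkeeping. The kernel of $Y_i \mapsto y_i$ need not be generated by homogeneous elements, while the paper's functional relations are homogeneous. To avoid the issue, I would bypass contraction and prove primality of~$J$ directly. Suppose $f,g \in \Qbar[Y]$ are homogeneous with $fg \in J \subset \tilde I$. Since $\tilde I$ is prime, $f \in \tilde I$ or $g \in \tilde I$. Say $f \in \tilde I$; then $f$, being homogeneous and in $\Qbar[Y]$, annihilates $(y_i)$ and so is a homogeneous functional relation free of~$X$, giving $f \in J$. Because $J$ is a homogeneous ideal, primality can be tested on homogeneous elements, which completes the proof.
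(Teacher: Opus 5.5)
Your proposal is correct and follows essentially the paper's route: contract the prime ideal $\tilde I$ of \cref{functional-relations-ideal-is-prime} to $\Qbar[Y_1,\dotsc,Y_\numfuns]$, and identify this contraction with the ideal generated by the trivial relations by specialising $X$-free generators. The only difference is bookkeeping. You define $J$ as the contraction and then prove $I_\xi = J$, whereas the paper defines $I$ from the $X$-free generators and checks $I = \tilde I \cap \Qbar[Y_1,\dotsc,Y_\numfuns]$. Your final paragraph's homogeneity detour is unnecessary: $J$ is by definition $\tilde I \cap \Qbar[Y_1,\dotsc,Y_\numfuns]$, so it is prime as the contraction of a prime.
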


\begin{proof}
Let $P_1,\dotsc,P_r$ be elements of $\Qbar[Y_1,\dotsc,Y_\numfuns]$ which generate the functional relations.
Let $I$ and~$\tilde I$ denote the ideals of $\Qbar[Y_1,\dotsc,Y_\numfuns]$ and~$\Qbar[X][Y_1,\dotsc,Y_\numfuns]$, respectively, generated by $P_1,\dotsc,P_r$.
By \cref{functional-relations-ideal-is-prime}, $\tilde I$ is prime.
By an elementary calculation about polynomial rings, $I = \tilde I \cap \Qbar[Y_1,\dotsc,Y_\numfuns]$, so it follows that $I$ is prime.
But $I$ is also equal to the ideal generated by the trivial relations at~$\xi$.
\end{proof}

\subsection{\texorpdfstring{$v$}{v}-adic and global relations}

Let $\relation \in \Qbar[Y_1,\dotsc,Y_n]$ be a homogeneous polynomial.
Let $K$ be a number field which contains $\xi$ and the coefficients of $y_1,\dotsc,y_\numfuns$ and $\relation$.

The following definitions are from \cite[Def.~1.10]{Urb25} and \cite[VII, 5.1]{And89}.

\begin{definition} \leavevmode
\begin{enumerate}
\item A place $v$ of~$K$ is \defterm{relevant for~$\xi$} if $\abs{\xi}_v < \min \{ 1, R_v(y_1), \dotsc, R_v(y_\numfuns) \}$.
\item $\relation$ is a \defterm{$v$-adic relation between $y_1, \dotsc, y_\numfuns$ at~$\xi$}, where $v$ is a relevant place of~$K$ for~$\xi$, if
\[ \relation(y_1^{\van}(\xi), \dotsc, y_\numfuns^\van(\xi)) = 0. \]
\item $\relation$ is a \defterm{global relation between $y_1, \dotsc, y_\numfuns$ at~$\xi$} if it is a $v$-adic relation between $y_1,\dotsc,y_\mu$ at~$\xi$ for every relevant place~$v$ of~$K$.
\end{enumerate}
\end{definition}
The notion of global relation is independent of the choice of~$K$.

\subsection{\texorpdfstring{$(r_v)$}{(r\_v)}-global relations}

We introduce a new definition of ``$(r_v)$-global relations'', mildly generalising the global relations.
This provides a new way to deal with a wrinkle in all applications of G-functions to unlikely intersections, namely that the radii within which unlikely intersections give rise to relations between G-functions might be smaller than the radii of convergence. Previous papers have adopted different approaches to resolve this issue: see \cite[X, 3.1]{And89}, \cite[Lemma~4.2]{Urb25}, and the function~$H$ in \cite[sec.~5.8]{DO:Y1n}. Using $(r_v)$-global relations simplifies the application by removing this extra step, and leads to sharper explicit bounds than the previous approaches.

\begin{definition}
A \defterm{system of radii over~$K$} is a sequence~$(r_v)$, indexed by the places $v$ of~$K$, with values $r_v \in \RR_{>0} \cup \{\infty\}$ for all~$v$.
\end{definition}

\begin{definition}
A system of radii~$(r_v)$ is \defterm{acceptable for $y_1, \dotsc, y_\numfuns$} if:
\begin{enumerate}[(i)]
\item $r_v \leq R_v(y_1,\dotsc,y_\numfuns)$ for all~$v$; and
\item $\sum_v' \log^+(1/r_v) < \infty$.
\end{enumerate}
\end{definition}

If $y_1,\dotsc,y_\numfuns$ are G-functions, then the global radius $\rho(y_1,\dotsc,y_\numfuns)$ is finite by \cite[VI, 5, Main~Thm.]{And89}.
It follows that the radii of convergence $R_v(y_1,\dotsc,y_\numfuns)$ themselves form an acceptable system of radii.


If $(r_v)$ is an acceptable system of radii over~$K$, $\hat K$ is a finite extension of~$K$, and $\hat v$ is a place of~$\hat K$, we shall write $r_{\hat v}$ as an abbreviation for~$r_{(\hat v|_K)}$.
Using this notation, $(r_{\hat v})$ forms an acceptable system of radii for $y_1,\dotsc,y_\numfuns$ over~$\hat K$, which we call the \defterm{extension of $(r_v)$ to~$\hat K$}.

\begin{definition}
Let $(r_v)$ be an acceptable system of radii for $y_1,\dotsc,y_\numfuns$ over~$K$.
Let $\hat K$ be a finite extension of~$K$ which contains $\xi$ and the coefficients of~$\relation$.
\begin{enumerate}
\item A place $\hat v$ of~$\hat K$ is \defterm{$(r_v)$-relevant for~$\xi$} if $\abs{\xi}_{\hat v} < \min \{ 1, r_{\hat v} \}$.
\item $\relation$ is an \defterm{$(r_v)$-global relation between $y_1, \dotsc, y_\numfuns$ at~$\xi$} if it is a $\hat v$-adic relation between $y_1,\dotsc,y_\numfuns$ at~$\xi$ for every $(r_v)$-relevant place~$\hat v$ of~$\hat K$.
\end{enumerate}
\end{definition}

The notion of $(r_v)$-global relation is independent of the choice of~$\hat K$.

In the case $r_v=R_v(y_1,\dotsc,y_\numfuns)$ for all~$v$, $(r_v)$-relevant places and $(r_v)$-global relations are the same as relevant places and global relations, respectively.
For a general acceptable system of radii~$(r_v)$, every $(r_v)$-relevant place is relevant and every global relation is $(r_v)$-global, but the converses need not be true.

\section{Explicit height bounds for general non-trivial global relations} \label{sec:bounds-nt}

In this section, we state and prove explicit height bounds for points where there are non-trivial $(r_v)$-global relations between G-functions.
(Bounds involving super-strongly non-trivial relations are proved in the next section.)
The key ingredient is the explicit inequality \cite[VII, 3.5, Prop.]{And89}.
We correct some minor errors in this inequality, then deduce the explicit versions of \cref{andre-thm-E}.

\subsection{Set-up: Input to theorems} \label{ssec:input-data}


Throughout sections \ref{sec:bounds-nt} and~\ref{sec:bounds-super-strongly-nt}:
\begin{enumerate}[(i)]
\item $y_1, \dotsc, y_\numfuns$ denote G-functions whose $\Qbar(X)$-linear span is closed under $d/dX$; 
\item $\Lambda$ denotes a differential operator over~$\Qbar(X)$ such that $(y_1,\dotsc,y_\numfuns)$ is a solution of~$\Lambda$;
\item $K$ denotes a number field which contains the coefficients of $y_1,\dotsc,y_\numfuns$ and~$\Lambda$;
\item $(r_v)$ denotes an acceptable system of radii for $y_1,\dotsc,y_\numfuns$ over~$K$.
\end{enumerate}

\begin{remark}
The condition that the $\Qbar(X)$-linear span of $y_1,\dotsc,y_\numfuns$ must be closed under $d/dX$ does not restrict the situations in which these height bounds can be applied, because given an arbitrary finite set of G-functions, one can always achieve this condition by appending finitely many derivatives of the original G-functions. Note that $\mu$ always refers to a number of G-functions which satisfy this condition (this condition was also required in \cite[Thm.~2]{Bom81} and \cite[VII, 5.2, Thm.]{And89}).
\end{remark}

The following quantities appear as inputs to the explicit bounds:
\begin{enumerate}[(i)]
\item $\numfuns$, the number of G-functions $y_1,\dotsc,y_\numfuns$;
\item $s$, an integer such that $\max \{ 1, \#\Sin(\Lambda) \} \leq s$;
\item $H$, an upper bound for $H(\Sin(\Lambda))$;
\item $\rho$, denote an upper bound for $\Sigma'_v \log^+(1/r_v)$;
\item $\sigma_y$, an upper bound for $\sigma(y_1,\dotsc,y_\numfuns)$;
\item $\sigma_\Lambda$, an upper bound for $\sigma(\Lambda)$.
\end{enumerate}

In order to simplify the statements of bounds by grouping together terms which are usually negligible, we write
\[ \theta_\Lambda = \max\{ \log(2), \sigma_\Lambda, H \}. \]

\subsection{Statements of explicit height bounds for non-trivial global relations} \label{ssec:ab-explicit-start}

This paper's main theorem is the following refined explicit version of \cref{andre-thm-E}.
All the other theorems of the paper are deduced from this one.

\begin{theorem} \label{andre-bombieri}
Let $\nummonomials,\kappa,\delta \in \ZZ_{\geq 1}$, with $\kappa < \nummonomials$.
Let $m_1,\dotsc,m_\nummonomials$ be monomials of degree~$\delta$ in indeterminates~$Y_1,\dotsc,Y_\numfuns$, such that the $\Qbar(X)$-linear span of $\{ m_i(y_1,\dotsc,y_\numfuns) : 1 \leq i \leq \nummonomials \}$ is closed under~$d/dX$.

Let $\xi \in \Qbar \setminus \{0\}$ be an ordinary point or good apparent singularity of~$\Lambda$.
Suppose that there exist $(r_v)$-global relations $\relation_1,\dotsc,\relation_\kappa$ between $y_1,\dotsc,y_\numfuns$ at~$\xi$, such that:
\begin{enumerate}
\item $\relation_1,\dotsc,\relation_\kappa$ lie in the $\Qbar$-span of $m_1,\dotsc,m_\nummonomials$;
\item $\relation_1,\dotsc,\relation_\kappa$ are $\Qbar$-linearly independent modulo the trivial relations at~$\xi$.
\end{enumerate}
Then:
\begin{align}
    h(\xi) < {}
  & 8s\frac{\nummonomials^2}{\kappa^2}(\nummonomials-\tfrac{1}{2}) \bigl( (\log(\delta)+1)\sigma_y + \rho \bigr)
\notag
\\& + 6\frac{\nummonomials}{\kappa} \log(2) + \frac{1}{2s}(\log(\delta)+1)\sigma_\Lambda + \frac{1}{s}H.
\label{eqn:hxi-general-bound}
\end{align}
\end{theorem}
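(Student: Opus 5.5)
We will follow the strategy of Bombieri and Andr\'e: Siegel-type auxiliary construction, combined with the product formula. The key input is the explicit inequality \cite[VII, 3.5, Prop.]{And89}, after correcting its minor errors and converting it to our normalisation of absolute values and to $(r_v)$-relevant places.

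\textbf{Reduction to the monomials.} Set $z_i = m_i(y_1,\dotsc,y_\numfuns)$ for $1 \leq i \leq \nummonomials$. By hypothesis the $\Qbar(X)$-span of the~$z_i$ is closed under $d/dX$, so the~$z_i$ are solutions of a differential operator~$\Lambda_\delta$ of rank~$\nummonomials$. Concretely, $\Lambda_\delta$ is a quotient of the $\delta$-th symmetric power of the $\partial$-module of~$\Lambda$.

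We will bound the inputs for the~$z_i$ in terms of those for the~$y_i$:
\begin{itemize}
\item $\sigma(z_1,\dotsc,z_\nummonomials) \leq (\log(\delta)+1)\sigma_y$. Here the $\log\delta$ term comes from the growth of $\lcm$-type denominators under products of $\delta$ series, with multinomial coefficients controlled at archimedean places.
\item $\sigma(\Lambda_\delta) \leq (\log(\delta)+1)\sigma_\Lambda$, by the analogous argument using Andr\'e's estimates for symmetric powers and quotients of $\partial$-modules.
\item The radii satisfy $R_v(z) \geq R_v(y) \geq r_v$, so $(r_v)$ remains acceptable with the same~$\rho$.
\item $\Sin(\Lambda_\delta) \subset \Sin(\Lambda)$. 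Moreover $\xi$ stays an ordinary point or good apparent singularity, since evaluation at~$\xi$ of the local solution space of the symmetric power is still bijective.
\end{itemize}

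The relations $\relation_1,\dotsc,\relation_\kappa$ then become $\kappa$ linear forms in $z_1,\dotsc,z_\nummonomials$. They are $(r_v)$-global and linearly independent modulo linear trivial relations at~$\xi$. So we have reduced to the linear case, $\deg = 1$, with $\nummonomials$ functions.

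\textbf{Auxiliary construction.} This step is Andr\'e's VII.3.5, made explicit. By Siegel's lemma we build auxiliary polynomials $Q_1,\dotsc,Q_\nummonomials \in \cO_K[X]$ of degree at most~$N$ such that $R = \sum_i Q_i z_i$ vanishes at~$0$ to order at least $N + M$. We take $M$ proportional to $N\kappa/(\nummonomials-\kappa)$, which is possible because the $\kappa$ relations reduce the number of independent conditions. We control the height of the $Q_i$ via $\sigma(z)$.

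Applying $\Gamma_{[j]}/j!$ (from $\Lambda_\delta$) produces a family of derived forms. A Shidlovsky-type zero estimate shows that, after multiplying by a power of $T = \prod_{a \in \Sin} (X-a)$ to clear denominators, some $\nummonomials-\kappa$ of these forms, together with the $\relation_j$, give a nonvanishing determinant at~$\xi$. This is where the hypothesis that $\xi$ is ordinary or a good apparent singularity enters, and it is what produces the factors $s$ and $H$. The coefficients of~$T$ have size governed by $H(\Sin(\Lambda))$, and $\sigma_\Lambda$ bounds the growth of the derivative matrices.

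\textbf{Product formula.} Let $\Delta$ be the resulting nonzero algebraic number. We bound $\log\abs{\Delta}_v$ at each place:
\begin{itemize}
\item At $(r_v)$-relevant places, the vanishing of~$R$ to high order at~$0$ and the relations $\relation_j$ holding $v$-adically give $\log\abs{\Delta}_v \lesssim (N+M)\log\abs{\xi}_v + (\text{size terms})$.
\item At all other places, we use trivial bounds involving $\log^+\abs{\xi}_v$ and $\log^+(1/r_v)$.
\end{itemize}
Summing with $\sum_v'$ and using $\sum_v' \log\abs{\Delta}_v = 0$ yields an inequality of the form $M\,h(\xi) \lesssim N(\nummonomials/\kappa)\bigl((\log\delta+1)\sigma_y+\rho\bigr)+\dotsb$. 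Optimising $N$ and $M$ gives \eqref{eqn:hxi-general-bound}, and the $\nummonomials^2(\nummonomials-\frac12)/\kappa^2$ shape comes from this optimisation.

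\textbf{Main obstacle.} The main difficulty will be making the zero estimate and the bookkeeping of Andr\'e's VII.3.5 fully explicit, with correct constants in our normalisation. In particular, we must track how the order of vanishing of the auxiliary function is controlled by the number of singularities~$s$ (via Fuchs-type relations) and by~$H$. We would first try to reproduce Andr\'e's proof line by line, inserting explicit Siegel lemma constants (Bombieri--Vaaler) and replacing $R_v$ by $r_v$ throughout.
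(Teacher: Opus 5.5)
Your outline has the right overall shape: pass to the monomials $z_i=m_i(y_1,\dotsc,y_\mu)$ using the symmetric-power estimates of \cite[VII, 4.1]{And89}, then apply Andr\'e's inequality. However, there are two genuine gaps.

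The first gap is in your reduction to the linear case. The $z_i$ need not be $\Qbar(X)$-linearly independent; the hypotheses allow this, for example when all monomials of degree~$\delta$ are used and the $y_i$ satisfy functional relations. But \cref{andre-explicit-bound-corrected}, like any Siegel-lemma/zero-estimate argument, needs linearly independent functions, since otherwise the auxiliary form $\sum_i Q_iz_i$ can vanish identically. So you must pass to a subset $\{z_j : j\in J\}$ which is a $\Qbar(X)$-basis, and show that there are still $\kappa$ $\Qbar$-linearly independent $(r_v)$-global linear relations among these. This is not automatic. If you write $uz_i=\sum_{j\in J}t_{ij}z_j$ for an arbitrary basis, $u$ may vanish at~$\xi$, and then the relations cannot be transported by specialising at~$\xi$. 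The paper's \cref{linear-relations-redundant-variables} handles this as follows:
\begin{enumerate}
\item It chooses $J$ so that the $z_j$ form a basis of the module they span over the local ring $\Qbar[X]_{(X-\xi)}$ (via Nakayama), which guarantees $u(\xi)\neq0$.
\item It then shows that independence modulo trivial relations at~$\xi$ becomes honest linear independence of the substituted forms.
\end{enumerate}
Since $\#J\le\lambda$ and the bound increases with~$\lambda$, nothing is lost.

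Relatedly, your one-line reason that $\xi$ stays ordinary or a good apparent singularity is not valid as stated. Bijectivity of evaluation for the full symmetric power does not pass to the operator of a subfamily written in the basis~$z$. For example, take $\delta=1$, $y=(1,(X-\xi)\log(1+X),\log(1+X),\log(1+2X))$ and monomials $Y_1,Y_2,Y_4$. Then $\xi\notin\{0,-1,-\tfrac12\}$ is ordinary for~$\Lambda$, but it is a bad apparent singularity of the operator of $(1,(X-\xi)\log(1+X),\log(1+2X))$. So that step needs a real argument; the paper's proof also leaves it implicit.

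The second gap is that the proposal never derives the explicit bound, which is the whole content of the theorem. ``Optimising $N$ and $M$'' gives no way to check that the constants $8s\frac{\lambda^2}{\kappa^2}(\lambda-\frac12)$, $6\frac{\lambda}{\kappa}\log 2$, $\frac1{2s}$ and $\frac1s$ come out. Your sketch also misplaces the role of the relations. Relations at~$\xi$ cannot make the Siegel-lemma system at~$0$ easier to solve. In Andr\'e's argument they enter only at~$\xi$, where they reduce to $\nu=\lambda-\kappa$ the number of derived forms needed in the determinant; this is the source of $\nu/\mu$ in \eqref{eqn:xi_v-inequality-3.5}.

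There is no need to reprove Andr\'e's inequality or to replace $R_v$ by $r_v$ inside it. The paper uses \cref{andre-explicit-bound-corrected} as a black box:
\begin{enumerate}
\item It takes $V$ to be the set of places where $\abs{\xi}_v<\min\{1,r_v\}$ (non-archimedean) or $\abs{2\xi}_v<\min\{1,r_v\}$ (archimedean). This satisfies \eqref{eqn:V-condition} because $r_v\le R_v$, and gives ${\sum_{v\in V}}'\log\abs{\xi}_v\le -h(\xi)+\rho+\log 2$.
\item It chooses $\tau=\kappa/(4s\lambda)$; the inequality is only non-trivial for $\tau<\kappa/(2s\lambda-\kappa)$.
\item With this choice, the coefficient of $h(\xi)$ exceeds $\kappa/(2\lambda)$, the coefficient of $\log 2$ is below~$3$, and those of $\sigma_y$ and~$\rho$ are at most $\frac{s\lambda}{\kappa}(4\lambda-2)$.
\item Multiplying $\tau\sigma_\Lambda+2\tau H$ by $2\lambda/\kappa$ gives $\frac1{2s}\sigma_\Lambda+\frac1s H$.
\end{enumerate}
Together with the symmetric-power estimates you list, this yields exactly \eqref{eqn:hxi-general-bound}.
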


\begin{remark} \label{andre-bombieri-refinements}
Compared with \cref{andre-thm-E}, and its ``direct'' explicit version \cref{andre-5.2-explicit}, \cref{andre-bombieri} takes into account the following additional information about the relation(s) between $y_1,\dotsc,y_\numfuns$ at~$\xi$, namely:
\begin{enumerate}
\item the dimension of a suitable space of polynomials containing~$\relation$ (to deduce \cref{andre-5.2-explicit}, one uses the space of all homogeneous polynomials of the same degree as~$\relation$; in applications, $\relation$ may lie in a smaller subspace);
\item multiple linearly independent relations of the same degree satisfied by the evaluations at~$\xi$ of $y_1,\dotsc,y_\numfuns$ (\cref{andre-5.2-explicit} simply uses a single relation).
\end{enumerate}
\end{remark}

Usually, $s\nummonomials/\kappa$ is reasonably large and \eqref{eqn:hxi-general-bound} is dominated by the term
\[ 8s\frac{\nummonomials^3}{\kappa^2} \bigl( (\log(\delta)+1)\sigma_y + \rho \bigr). \]
However, $\sigma_y$ and~$\rho$ could be~$0$ or small compared with $\sigma_\Lambda$ and~$H$, so the second line of~\eqref{eqn:hxi-general-bound} is not always beaten by the ``$-\tfrac{1}{2}$'' term on the first line.
(The upper bound for $\sigma(\Lambda)$ at \cite[VI, 4, Thm.]{And89} is not sufficient to remove the $\sigma_\Lambda$ term from~\eqref{eqn:hxi-general-bound}.)
We can simplify the bound in \cref{andre-bombieri} to the following:

\begin{corollary} \label{andre-bombieri-simp}
In the situation of \cref{andre-bombieri}, we have:
\begin{equation} \label{eqn:hxi-general-bound-simp}
h(\xi) < 8s\frac{\nummonomials^3}{\kappa^2} \max \bigl\{ (\log(\delta)+1) \sigma_y + \rho, \; \frac{\kappa}{s\nummonomials} (\log(\delta)+2) \theta_\Lambda \bigr\}.
\end{equation}
\end{corollary}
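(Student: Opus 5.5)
This corollary is pure bookkeeping on the right-hand side of \eqref{eqn:hxi-general-bound}; no new input about G-functions is needed. Write $A = (\log(\delta)+1)\sigma_y + \rho$ and $B = \frac{\kappa}{s\nummonomials}(\log(\delta)+2)\theta_\Lambda$, and let $M = \max\{A,B\}$. I will show that the right-hand side of \eqref{eqn:hxi-general-bound} is at most $8s\frac{\nummonomials^3}{\kappa^2} M$. The strict inequality then follows directly from \cref{andre-bombieri}.

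First, use $\nummonomials - \tfrac12$ to split the leading term as
\[ 8s\frac{\nummonomials^2}{\kappa^2}\bigl(\nummonomials-\tfrac12\bigr)A = 8s\frac{\nummonomials^3}{\kappa^2}A - 4s\frac{\nummonomials^2}{\kappa^2}A. \]
The first piece is at most $8s\frac{\nummonomials^3}{\kappa^2}M$. It therefore suffices to show that the remaining terms, namely
\[ 6\frac{\nummonomials}{\kappa}\log(2) + \frac{1}{2s}(\log(\delta)+1)\sigma_\Lambda + \frac1s H, \]
are at most $4s\frac{\nummonomials^2}{\kappa^2}A$ when $A \geq B$. When $A < B$, we instead absorb everything into $8s\frac{\nummonomials^3}{\kappa^2}B$ directly.

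The cleanest route is to bound these remaining terms by $4s\frac{\nummonomials^2}{\kappa^2}B$, which is $\leq 4s\frac{\nummonomials^2}{\kappa^2}M$. Since $\theta_\Lambda \geq \max\{\log 2, \sigma_\Lambda, H\}$, the remaining terms are at most
\[ \Bigl(6\frac{\nummonomials}{\kappa} + \frac{1}{2s}(\log(\delta)+1) + \frac1s\Bigr)\theta_\Lambda \leq \frac{\nummonomials}{\kappa}\bigl(6 + \log(\delta)+1+1\bigr)\theta_\Lambda \cdot c, \]
using $s\geq1$ and $\nummonomials/\kappa > 1$. Meanwhile
\[ 4s\frac{\nummonomials^2}{\kappa^2}B = 4\frac{\nummonomials}{\kappa}(\log(\delta)+2)\theta_\Lambda. \]
So we need $6 + \frac{1}{2s}(\log\delta+1) + \frac1s \leq 4\frac{\nummonomials}{\kappa}\cdot\ldots$, more precisely
\[ 6\frac{\nummonomials}{\kappa} + \frac{\log\delta+3}{2s} \leq 4\frac{\nummonomials}{\kappa}(\log\delta+2). \]
This holds because $4(\log\delta+2) \geq 8 > 6$ and $\frac{\log\delta+3}{2} \leq 2(\log\delta+2)\cdot\frac{\nummonomials}{\kappa}$.

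Hence, in all cases, the right-hand side of \eqref{eqn:hxi-general-bound} is at most $8s\frac{\nummonomials^3}{\kappa^2}A - 4s\frac{\nummonomials^2}{\kappa^2}A + 4s\frac{\nummonomials^2}{\kappa^2}M$. If $M = A$, this equals $8s\frac{\nummonomials^3}{\kappa^2}A$. If $M = B$, use $A < B$ to bound it by $8s\frac{\nummonomials^3}{\kappa^2}B$, since the coefficient $8s\frac{\nummonomials^2}{\kappa^2}(\nummonomials - \tfrac12)$ of $A$ is nonnegative.

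The only step needing care is this constant comparison, i.e.\ checking that the factor $(\log\delta+2)$ in $B$, rather than $(\log\delta+1)$, gives enough room. Everything else is routine.
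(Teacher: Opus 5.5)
Your proof is correct and is essentially the paper's: using $\theta_\Lambda \geq \max\{\log 2, \sigma_\Lambda, H\}$, $s \geq 1$ and $\nummonomials/\kappa \geq 1$, you bound the lower line of \eqref{eqn:hxi-general-bound} by $4\frac{\nummonomials}{\kappa}(\log(\delta)+2)\theta_\Lambda = 4s\frac{\nummonomials^2}{\kappa^2}B$, and then absorb it using the $-\tfrac12$ in $\nummonomials - \tfrac12$. The only blemishes are the stray factor $c$ and the final constant check, where your two stated sub-inequalities do not add up to the one you need; it instead follows directly from $4\frac{\nummonomials}{\kappa}(\log(\delta)+2) - 6\frac{\nummonomials}{\kappa} = \frac{\nummonomials}{\kappa}(4\log(\delta)+2) \geq 4\log(\delta)+2 \geq \frac{\log(\delta)+3}{2} \geq \frac{\log(\delta)+3}{2s}$, using $\log(\delta) \geq 0$.
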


\begin{proof}
Since $\nummonomials/\kappa \geq 1$ and $s \geq 1$, the lower line of~\eqref{eqn:hxi-general-bound} is at most
\[ 8 \cdot \tfrac{1}{2} \cdot \frac{\nummonomials}{\kappa}(\log(\delta)+2)\theta_\Lambda. \qedhere \]
\end{proof}

In~\eqref{eqn:hxi-general-bound-simp}, the first element inside the $\max$ is usually much larger than the second (while $\sigma_y$ and~$\rho$ are usually similar in size).
Subsequent theorems in this paper are written with a bound similar in form to~\eqref{eqn:hxi-general-bound-simp}, although in each theorem one could write down a more precise bound similar to~\eqref{eqn:hxi-general-bound}.

The following corollary is the explicit version of \cref{andre-thm-E}, that is, of the bound for $\Sha_\delta$ in \cite[VII, 5.2, Thm.]{And89}.
The corollary is deduced from \cref{andre-bombieri} by restricting to $\kappa=1$ and by taking $\{m_1,\dotsc,m_\nummonomials\}$ to be the set of all monomials of degree~$\delta$ in $Y_1, \dotsc, Y_\numfuns$.
The number of such monomials is
\[ \nummonomials = \binom{\numfuns+\delta-1}{\numfuns-1} \leq \numfuns\delta^{\numfuns-1}. \]
This is greater than $\kappa=1$ (so that \cref{andre-bombieri} applies) whenever $\numfuns \geq 2$.

\begin{corollary} \label{andre-5.2-explicit}
Let $\delta \in \ZZ_{\geq 1}$.
Suppose that $\numfuns \geq 2$.
For all $\xi \in \Qbar \setminus\{0\}$, if $\xi$ is an ordinary point or a good apparent singularity of~$\Lambda$ and there exists a non-trivial $(r_v)$-global relation of degree~$\delta$ between $y_1,\dotsc,y_\numfuns$ at~$\xi$, then
\begin{align*}
h(\xi) < 8s\numfuns^3 \delta^{3(\numfuns-1)} \max \bigl\{ (\log(\delta)+1) \sigma_y + \rho, \; \frac{1}{s\numfuns\delta^{\numfuns-1}} (\log(\delta)+2) \theta_\Lambda \bigr\}.
\end{align*}
\end{corollary}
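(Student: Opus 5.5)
The corollary follows from \cref{andre-bombieri-simp} with $\kappa=1$ and $\{m_1,\dotsc,m_\nummonomials\}$ the set of all monomials of degree~$\delta$ in $Y_1,\dotsc,Y_\numfuns$. The work is in checking the hypotheses of \cref{andre-bombieri} and then simplifying the resulting bound.

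First I check the hypotheses. The $\Qbar(X)$-span of all degree-$\delta$ monomials in $y_1,\dotsc,y_\numfuns$ is closed under $d/dX$. Indeed, the derivative of such a monomial is, by the Leibniz rule, a sum of terms $(\text{monomial of degree }\delta-1)\cdot y_i'$. By the standing assumption (i) of section~\ref{ssec:input-data}, each $y_i'$ is a $\Qbar(X)$-linear combination of $y_1,\dotsc,y_\numfuns$, so each such term is a $\Qbar(X)$-combination of degree-$\delta$ monomials. Next, $\nummonomials=\binom{\numfuns+\delta-1}{\numfuns-1}\geq \numfuns\geq 2>1=\kappa$ because $\numfuns\geq 2$. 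Finally, take $\relation_1=\relation$, the given non-trivial $(r_v)$-global relation. It is homogeneous of degree~$\delta$, so it lies in the span of the $m_i$. A single polynomial is $\Qbar$-linearly independent modulo the trivial relations exactly when it is not itself trivial, since the trivial relations form a $\Qbar$-subspace; this is precisely non-triviality. The assumptions on $\xi$ match those of \cref{andre-bombieri}.

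Applying \cref{andre-bombieri-simp} with $\kappa=1$ gives
\[ h(\xi) < 8s\nummonomials^3 \max\bigl\{ (\log(\delta)+1)\sigma_y+\rho,\; \tfrac{1}{s\nummonomials}(\log(\delta)+2)\theta_\Lambda\bigr\}. \]
It remains to replace $\nummonomials$ by the stated quantity. A naive termwise substitution of $\nummonomials\leq \numfuns\delta^{\numfuns-1}$ fails, because the second entry of the max involves $1/\nummonomials$, which goes the wrong way. Instead I write the right-hand side as
\[ \max\{8s\nummonomials^3 A,\; 8\nummonomials^2 B\}, \]
where $A=(\log(\delta)+1)\sigma_y+\rho$ and $B=(\log(\delta)+2)\theta_\Lambda$. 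Both entries are increasing in $\nummonomials$, so each may be bounded using $\nummonomials\leq \numfuns\delta^{\numfuns-1}$. The result is
\[ \max\{8s\numfuns^3\delta^{3(\numfuns-1)}A,\; 8\numfuns^2\delta^{2(\numfuns-1)}B\}, \]
and this is exactly $8s\numfuns^3\delta^{3(\numfuns-1)}\max\{A,\tfrac{1}{s\numfuns\delta^{\numfuns-1}}B\}$. Here I use that $A,B\geq 0$, which holds since $\theta_\Lambda\geq\log 2$ and $\sigma_y,\rho\geq 0$.

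The only remaining input is the elementary inequality $\binom{\numfuns+\delta-1}{\numfuns-1}\leq \numfuns\delta^{\numfuns-1}$. I would prove it as follows. Write the binomial coefficient as $\prod_{j=1}^{\numfuns-1}\frac{\delta+j}{j}$. For $\numfuns=2$ it equals $\delta+1\leq 2\delta$, so the claim holds. For the induction step, the ratio between consecutive values in $\numfuns$ is $\frac{\delta+\numfuns}{\numfuns}$. This is at most $\frac{\numfuns+1}{\numfuns}\delta$, since $\delta+\numfuns\leq(\numfuns+1)\delta$ for $\delta\geq1$. That ratio is exactly the growth factor of $\numfuns\delta^{\numfuns-1}$ from $\numfuns$ to $\numfuns+1$, so the inequality propagates. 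I expect no real obstacle; the one step that needs care is the monotonicity argument for the max.
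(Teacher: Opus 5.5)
Your proposal is correct and takes the same route as the paper: apply \cref{andre-bombieri} (in the simplified form \cref{andre-bombieri-simp}) with $\kappa=1$ and $\{m_1,\dotsc,m_\lambda\}$ all degree-$\delta$ monomials. It uses $\lambda=\binom{\mu+\delta-1}{\mu-1}\leq\mu\delta^{\mu-1}$, and $\lambda>1$ when $\mu\geq2$. The paper leaves implicit why the upper bound for $\lambda$ may be substituted despite the $1/\lambda$ inside the max, and your rewriting as $\max\{8s\lambda^3A,\,8\lambda^2B\}$ with $A,B\geq0$ justifies this correctly, as do your checks of the hypotheses and of the binomial inequality.
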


\subsection{Correction to explicit bound in \texorpdfstring{\cite[VII, 3.5]{And89}}{[And89, VII, 3.5]}}

The primary ingredient for proving \cref{andre-bombieri} is the explicit inequality \cite[VII, 3.5, Prop.]{And89}.
This paper does not discuss the full proof of this inequality (found in \cite[IV, 5 and VII, 2 and~3]{And89}), but we fix some errors in it.
These errors do not affect \cite[VII, 5.2, Thm.]{And89} because it does not give explicit values for constants, but they are significant for obtaining an explicit height bound.
The corrected statement of \cite[VII, 3.5, Prop.]{And89} is as follows.

\begin{proposition} \label{andre-explicit-bound-corrected}
Let $\tau$ be a positive real number.
Let $\kappa$ be an integer satisfying $0 < \kappa < \numlinmons$.
Let $\nu = \numlinmons-\kappa$.
Suppose that $y_1,\dotsc,y_\numlinmons$ are $\Qbar(X)$-linearly independent.

Let $\xi \in K \setminus \{0\}$ be an ordinary point or a good apparent singularity of~$\Lambda$.
Let $V$ be a finite set of places of~$K$ such that
\begin{equation} \label{eqn:V-condition}
\abs{\xi}_v < \min\{ 1, R_v(y_1), \dotsc, R_v(y_\numlinmons) \} \cdot \min\{ 1, \abs{2}_v^{-1} \}
\end{equation}
for all $v \in V$.
Suppose that there exist $\kappa$ $K$-linearly independent homogeneous linear polynomials $\relation_1, \dotsc, \relation_\kappa \in K[Y_1,\dotsc,Y_\numlinmons]$ such that, for every $v \in V$ and every $j=1,\dotsc,\kappa$, $\relation_j$ is a non-trivial $v$-adic relation between $y_1,\dotsc,y_\numlinmons$ at~$\xi$.
Then
\begin{equation} \label{eqn:xi_v-inequality-3.5}
{\sum_{v \in V}}' \log \abs{\xi}_v + \Bigl( \frac{\nu}{\numlinmons}(1+\tau) + \tau(2s-1) \Bigr) h(\xi) \geq -\refC{andres-c4}(\Lambda, \tau, \underline y),
\end{equation}
\createC{andres-c4}
where
\begin{align*}
    \refC{andres-c4}(\Lambda, \tau, \underline y)
  & = \bigl( s\tau + \frac{\nu}{\numlinmons}(1+\tau) + 1 \bigr) \log(2)
      + \tau \sigma(\Lambda) + 2 \tau H(\Sin(\Lambda))
\\& \phantom{{}={}} + \nu( 1+\tfrac{1}{\tau} ) \sigma(y_1,\dotsc,y_\numlinmons)
      + \bigl( 1 + \nu( 1+\tfrac{1}{\tau} ) \bigr) \rho(y_1,\dotsc,y_\numlinmons).
\end{align*}
\end{proposition}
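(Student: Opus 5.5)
The plan is to follow André's proof of \cite[VII, 3.5, Prop.]{And89} step by step, using this paper's normalisation of absolute values, and to redo the bookkeeping of constants at each step so as to locate and fix the errors. The argument is a Siegel--Shidlovsky style construction of auxiliary polynomials (Padé approximants of type II), followed by a product-formula comparison.

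The first step is the auxiliary construction. Fix parameters $N$ (the degree of the auxiliary polynomials) and $T$ (the vanishing order), linked by $T \approx (\numlinmons/\nu)(N+1)\cdot\frac{1}{1+\tau}$ or a similar choice governed by~$\tau$. Apply Siegel's lemma over~$K$ to obtain polynomials $Q_1,\dotsc,Q_\numlinmons \in \cO_K[X]$ of degree at most~$N$, not all zero, such that $R = \sum_i Q_i y_i$ vanishes to order at least~$T$ at~$0$. The height of the $Q_i$ is controlled by $\sigma(y)$ and $\rho(y)$, via \cite[VI, 5]{And89} and \cite[VII, 2]{And89}; this is where the terms $\nu(1+\frac1\tau)\sigma(y)$ and the $\rho$ term come from. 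Next, apply the operators $\frac{1}{k!}\frac{d^k}{dX^k}$, using $\Gamma_{[k]}/k!$ from the definition of $\sigma(\Lambda)$, and clear the denominators of~$\Gamma$ by a polynomial $D$ vanishing on $\Sin(\Lambda)$. This produces a family of linear forms $L_k = \sum_i Q_{k,i}\, Y_i$ with $Q_{k,i}\in K[X]$, whose degrees grow by roughly $s$ per derivative. The contributions of $H(\Sin(\Lambda))$, $\sigma(\Lambda)$ and $s\tau$ arise here, together with the $(2s-1)\tau\, h(\xi)$ coming from evaluating these polynomials at~$\xi$.

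The second step is non-vanishing. Shidlovsky's lemma, in André's form \cite[IV, 5]{And89}, shows that among $L_0,\dotsc,L_{k_0}$, with $k_0 \approx \tau N$, one can find $\nu$ forms whose evaluations at~$\xi$, together with $\relation_1,\dotsc,\relation_\kappa$, are linearly independent over~$K$. Since $\xi$ is an ordinary point or a good apparent singularity, evaluation at~$\xi$ is bijective on solutions. Take the $\numlinmons\times\numlinmons$ determinant $\Delta \in K$ of these forms and show $\Delta\neq 0$; the non-triviality of the $\relation_j$ is exactly what is needed for this.

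The third step bounds $\Delta$ and applies the product formula $\sum'_v \log\abs{\Delta}_v = 0$. At $v\in V$, replace the columns using the $v$-adic relations $\relation_j(y(\xi))=0$. Then each of the $\nu$ rows coming from $L_k$ is small, of size about $\abs{\xi}_v^{T-k}$ times the local coefficient growth; condition~\eqref{eqn:V-condition}, including the factor $\abs{2}_v^{-1}$, is what guarantees convergence of the tail with an explicit $\log 2$ loss. At all other places, use the trivial bounds. Dividing by $N$ and letting $N\to\infty$ gives \eqref{eqn:xi_v-inequality-3.5}.

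The main obstacle is the accounting of constants. Each $\log 2$ (from the tails of geometric series, from binomial sums in derivatives, and from $\abs{2}_v$) and the coefficient of $H(\Sin(\Lambda))$ (where André's factor must become~$2$) has to be tracked through the choice of~$T$ relative to~$N$. I would first rederive the exponent of $h(\xi)$, namely $\frac{\nu}{\numlinmons}(1+\tau)+\tau(2s-1)$, by carefully counting the degrees of $D^kQ_{k,i}$ in the $\nu$ rows, and then match each remaining term of $\refC{andres-c4}$ against the corresponding estimate in \cite[VII, 2--3]{And89}.
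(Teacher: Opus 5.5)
Your overall route matches the paper's. The paper does not reprove this inequality: it takes André's argument \cite[IV, 5 and VII, 2--3]{And89} and patches it. You also use the ``good apparent singularity'' hypothesis exactly where the paper locates André's error, namely the claim in \cite[VII, 3.3]{And89} that solutions of $\Lambda$ are determined by their values at~$\xi$.

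However, you miss the other patch. The bound for $h_{v,n}(y)$ in \cite[IV, 5.4, Prop.]{And89}, which is used in the proof of \cite[VII, 3.5]{And89}, is false when $\#\Sin(\Lambda)=0$; constant G-functions with $\Lambda=0$ already show this. Its proof uses $m \leq n \Rightarrow (\#\Sin(\Lambda)-1)m+c \leq (\#\Sin(\Lambda)-1)n+c$, which fails in that case. That is why the statement uses $s \geq \max\{1,\#\Sin(\Lambda)\}$ instead of $\#\Sin(\Lambda)$. Your plan is to match each term of $\refC{andres-c4}$ against André's estimates, so it would inherit this defect rather than repair it. The place you suspect, the coefficient of $H(\Sin(\Lambda))$, is not where the paper finds anything wrong: apart from $s$ and the hypothesis on~$\xi$, the statement is André's.

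Separately, your third step as written does not produce the stated coefficient of $h(\xi)$. At $v \in V$, the only vector known to be killed by $\relation_1,\dotsc,\relation_\kappa$ is $y(\xi)$. So the column operation $\mathrm{col}_{i_0} \mapsto \sum_i y_i(\xi)\,\mathrm{col}_i$ gives a \emph{single} column, with entries $0$ in the $\relation_j$-rows and $R_k(\xi)$ in the $L_k$-rows. The rows $L_k(\xi)$ themselves are not small. Thus $\Delta$ gains one factor of size roughly $\abs{\xi}_v^{T}$, against a degree of roughly $\nu N$ in~$\xi$. Getting the coefficient $\frac{\nu}{\mu}(1+\tau)$ therefore requires $T \approx \mu(N+1)/(1+\tau)$. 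This is also the choice for which the Siegel exponent $\mu(N+1)/(\mu(N+1)-T)$ equals $1+\frac{1}{\tau}$, consistent with the factor $\nu(1+\frac{1}{\tau})$ in front of $\sigma(y_1,\dotsc,y_\mu)$. With your $T \approx (\mu/\nu)(N+1)/(1+\tau)$ and one small column, you would only get the coefficient $\frac{\nu^2}{\mu}(1+\tau)$. This is already at least~$1$ when $\kappa=1$ and $\mu \geq 3$, and then the inequality is trivial because ${\sum_{v \in V}}' \log\abs{\xi}_v \geq -h(\xi)$. Note also that a polynomial vanishing only on $\Sin(\Lambda)$ cannot clear the denominators of $\Gamma$ when $\Lambda$ has apparent singularities, so that step needs more care than you indicate.
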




\Cref{andre-explicit-bound-corrected} incorporates two corrections of \cite[VII, 3.5, Prop.]{And89}.

The first correction is that this paper defines $s = \max\{1, \#\Sin(\Lambda)\}$ instead of $s=\#\Sin(\Lambda)$.
Indeed, \cite[IV, 5.4, Prop.]{And89}, used in the proof of \cite[VII, 3.5, Prop.]{And89}, is incorrect when $\#\Sin(\Lambda)=0$, as illustrated by constant G-functions ($\Lambda=0$).
The error in the proof of \cite[IV, 5.4, Prop.]{And89} is that it assumes that
\[ m \leq n \Rightarrow (\# \Sin(\Lambda)-1)m+c \leq (\#\Sin(\Lambda)-1)n+c \]
in order to obtain the bound for $h_{v,n}(y)$.
This implication is false when $\#\Sin(\Lambda) = 0$, but it may be fixed by replacing $\#\Sin(\Lambda)$ by~$1$ in this case.

The second correction is that, if $\xi$ is an apparent singularity, \cref{andre-explicit-bound-corrected} requires it to be good.
To see that \cite[VII, 3.5, Prop.]{And89}\ is not always true when $\xi$ is a bad apparent singularity, consider the example
\begin{align*}
    y_1 & = \log(1+X),
\\  y_2 & = X-a,
\end{align*}
where $a \in \ZZ \setminus \{-1\}$.
The vector $(y_1,y_2)^t$ is a solution of the differential operator
\[ \Lambda = \frac{d}{dX} - \Gamma
   = \frac{d}{dX} - \begin{pmatrix} 0 & 1/(X-a)(X+1) \\ 0 & 1/(X-a) \end{pmatrix}. \]
The point~$a$ is a bad apparent singularity of~$\Lambda$.
The polynomial~$Y_2$ is a global relation between $y_1, y_2$ at~$a$.
Taking $V$ to be the set of places of~$\QQ$ given by prime factors of~$a$, if \cite[VII, 3.5, Prop.]{And89} applied to $\xi=a$, then we would obtain
\begin{equation} \label{eqn:ha-bound}
-h(a) + \bigl(\tfrac{1}{2} + \tfrac{3}{2}\tau \bigr) h(a) \geq -\refC{andres-c4}(\Lambda, \tau, (y_1,y_2)).
\end{equation}
The constant $\refC{andres-c4}(\Lambda,\tau,(y_1,y_2))$ is independent of~$a$.
Hence, if we take $\tau < \tfrac{1}{3}$, then \eqref{eqn:ha-bound} contradicts the fact that $a$ may be arbitrarily large.
The error in the proof in \cite[VII, 3.3]{And89} is the assertion that solutions of $\Lambda$ are `uniquely determined by their ``initial'' value at~$\xi$'.
This is precisely the condition that $\xi$ should be an ordinary point or a \emph{good} apparent singularity.

\subsection{Proof of Theorem~\ref{andre-bombieri}} \label{ssec:ab-explicit-end}

In order to prove \cref{andre-bombieri}, we first prove the special case of linear relations, under the additional assumption that the G-functions $y_1,\dotsc,y_\numfuns$ are linearly independent (a strengthening of \cite[Thms. 2 and~3]{Bom81}).
We will then prove a lemma allowing us to remove the linear independence assumption, and finally generalise to relations of degree greater than~$1$.

\begin{lemma} \label{andre-bombieri-linear-indep}
In the setting of \cref{andre-bombieri}, suppose that $\delta=1$.
Let $\nummonomials=\numfuns$ and $m_i=Y_i$ for each $i=1,\dotsc,\numfuns$.
Suppose that $y_1, \dotsc,y_\numfuns$ are $\Qbar(X)$-linearly independent.
Let $\xi \in \Qbar \setminus \{0\}$ be an ordinary point or good apparent singularity of~$\Lambda$.
Suppose that there exist $\kappa$ $\Qbar$-linearly independent $(r_v)$-global relations of degree~$1$ between $y_1,\dotsc,y_\numfuns$ at~$\xi$.
Then
\begin{align*}
    h(\xi) < {}
  & \frac{s\nummonomials^2}{\kappa^2} (8\nummonomials - 4) \bigl( \sigma_y + \rho \bigr)
    + 6\frac{\nummonomials}{\kappa} \log(2) + \frac{1}{2s}\sigma_\Lambda + \frac{1}{s}H.
\end{align*}
\end{lemma}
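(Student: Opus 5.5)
The plan is to apply \cref{andre-explicit-bound-corrected} with $V$ the set of $(r_v)$-relevant places satisfying the extra condition \eqref{eqn:V-condition}, and then convert the resulting inequality into an upper bound for $h(\xi)$.

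First I replace $K$ by a finite extension containing $\xi$ and the coefficients of the relations. Since $y_1,\dotsc,y_\numfuns$ are linearly independent, the only trivial linear relation at $\xi$ is $0$, so each relation is non-trivial. The relations are $(r_v)$-global, so each is a $v$-adic relation at every $(r_v)$-relevant place. Let $V$ be the set of places with $\abs{\xi}_v < \min\{1,r_v\}\cdot\min\{1,\abs{2}_v^{-1}\}$. Because $r_v \le R_v(y)$, every place in $V$ satisfies \eqref{eqn:V-condition}. Also $V$ is finite, since $\xi \neq 0$.

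Next I bound $\sum'_{v\in V}\log\abs{\xi}_v$ from above by a negative multiple of $h(\xi)$. From the product formula,
\[ {\sum_v}'\log^-\abs{\xi}_v = -h(\xi). \]
For $v\notin V$ with $\abs{\xi}_v<1$, we have $\abs{\xi}_v \ge \min\{1,r_v\}\min\{1,\abs{2}_v^{-1}\}$. Hence the contribution of these places to $\sum'\log^-\abs{\xi}_v$ is at least
\[ -{\sum_v}'\log^+(1/r_v) - {\sum_{v\mid\infty}}'\log 2 \ge -\rho-\log 2 . \]
This gives
\[ {\sum_{v\in V}}'\log\abs{\xi}_v \le -h(\xi)+\rho+\log 2. \]
I also bound $\rho(y)$, which appears in $\refC{andres-c4}$, by $\rho$. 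Strictly, $\rho(y)$ is $\sum'\log^+(1/R_v) \le \sum'\log^+(1/r_v) \le \rho$, since $r_v\le R_v$.

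Substituting into \eqref{eqn:xi_v-inequality-3.5} gives
\[ \Bigl(1-\tfrac{\nu}{\nummonomials}(1+\tau)-\tau(2s-1)\Bigr)h(\xi) \le \refC{andres-c4}+\rho+\log 2, \]
where $\nu=\nummonomials-\kappa$. Write $\epsilon=\kappa/\nummonomials$. The coefficient on the left is
\[ \epsilon-\tau\Bigl(\tfrac{\nu}{\nummonomials}+2s-1\Bigr) \ge \epsilon-2s\tau . \]
I choose $\tau=\epsilon/(4s)$, so the coefficient is at least $\epsilon/2$. Then
\[ h(\xi) \le \frac{2\nummonomials}{\kappa}\bigl(\refC{andres-c4}+\rho+\log 2\bigr). \]

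The remaining step is to expand $\refC{andres-c4}$ with this $\tau$, which I expect to be the main bookkeeping obstacle. The key term is $\nu(1+1/\tau)$, which is at most $(\nummonomials-1)(1+4s\nummonomials/\kappa)$. After multiplying by $2\nummonomials/\kappa$, this gives roughly $8s\nummonomials^2(\nummonomials-1)/\kappa^2$. The extra $\rho$ terms and the leading $\rho$ must then be absorbed into $(8\nummonomials-4)s\nummonomials^2/\kappa^2$. This works because $1+\nu(1+1/\tau)+1$ times $2\nummonomials/\kappa$ is at most $s\nummonomials^2(8\nummonomials-4)/\kappa^2$, using $\nummonomials/\kappa\ge1$ and $s\ge1$; I would check this inequality carefully. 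The $\tau\sigma(\Lambda)$ and $2\tau H$ terms give exactly $\sigma_\Lambda/(2s)$ and $H/s$. The $\log 2$ terms sum to at most $(\tfrac14\epsilon+2+1)\cdot 2\nummonomials/\kappa\cdot\log2 \le 6\tfrac{\nummonomials}{\kappa}\log 2$ after a crude estimate. If the constants do not quite close, I would tune $\tau$, for example to $\epsilon/(2(\nu/\nummonomials+2s-1))$, to recover the stated $8\nummonomials-4$ and $6$.
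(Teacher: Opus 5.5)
Your proposal is essentially the paper's proof. It uses the same set $V$: your condition $\abs{\xi}_v<\min\{1,r_v\}\min\{1,\abs{2}_v^{-1}\}$ is exactly the paper's $\abs{2\xi}_v<\min\{1,r_v\}$ at archimedean places and $\abs{\xi}_v<\min\{1,r_v\}$ otherwise. It also uses the same estimate ${\sum_{v\in V}}'\log\abs{\xi}_v\le -h(\xi)+\rho+\log 2$ and the same choice $\tau=\kappa/4s\nummonomials$. That choice already yields the stated constants, so no retuning of $\tau$ is needed.

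Two of your bookkeeping estimates need tightening, and as written the first one is false.

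\textbf{The $\log 2$ coefficient.} Bounding $\tfrac{\nu}{\nummonomials}(1+\tau)$ by $1$ gives $(3+\tfrac14\epsilon)\cdot\tfrac{2}{\epsilon}=6\tfrac{\nummonomials}{\kappa}+\tfrac12$, which exceeds $6\tfrac{\nummonomials}{\kappa}$. Keep $\tfrac{\nu}{\nummonomials}=1-\epsilon$ instead. The exact coefficient is then $3-\tfrac34\epsilon+\tfrac{\epsilon(1-\epsilon)}{4s}\le 3-\tfrac12\epsilon<3$, which also gives you the strict inequality.

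\textbf{The $\rho$ coefficient.} You must use $\nu=\nummonomials-\kappa$ exactly, not $\nu\le\nummonomials-1$. The resulting $-4s\nummonomials$ term is what makes $2+(\nummonomials-\kappa)(1+4s\nummonomials/\kappa)\le\tfrac{s\nummonomials}{\kappa}(4\nummonomials-2)$ hold. The difference of the two sides is $4s\nummonomials-\tfrac{2s\nummonomials}{\kappa}-\nummonomials+\kappa-2\ge 0$, using $\kappa\ge1$ and $s\ge1$. With $\nu\le\nummonomials-1$ the inequality fails, for example at $s=1$, $\kappa=4$, $\nummonomials=5$.
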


\begin{proof}
After replacing~$K$ by a finite extension, and $(r_v)$ by its corresponding extension, we may assume that $K$ contains $\xi$ and the coefficients of the $(r_v)$-global relations.
Let $V$ denote the set of places~$v$ of~$K$ satisfying
\begin{align*}
    \abs{\xi}_v   & < \min\{ 1, r_v \} \text{ if } v \text{ is non-archimedean,}
\\  \abs{2\xi}_v  & < \min\{ 1, r_v \} \text{ if } v \text{ is archimedean.}
\end{align*}
As in the proof of \cite[VII, 5.2, Thm.]{And89}, a simple calculation shows that
\begin{align}
    {\sum_{v \in V}}' \log \abs{\xi}_v
    = -{\sum_{v \in V}}' \log^+ (1/\abs{\xi}_v)
  & = -h(\xi) + {\sum_{v \not\in V}}' \log^+ (1/\abs{\xi}_v)
\notag
\\&
    \leq -h(\xi) + \rho + \log(2).
\label{eqn:xi_v-inequality-5.2}
\end{align}
Combining \cref{andre-explicit-bound-corrected} with~\eqref{eqn:xi_v-inequality-5.2} yields
\begin{equation*}
\tfrac{1}{\numlinmons} \bigl( -\kappa + \tau(2s\numlinmons-\kappa) \bigr) h(\xi) + \rho + \log(2) \geq -\refC{andres-c4}(\Lambda, \tau, \underline y).
\end{equation*}
Expanding the value of $\refC{andres-c4}(\Lambda, \tau, \underline y)$, we obtain
\begin{align}
    \tfrac{1}{\numlinmons} \bigl( \kappa + \tau(\kappa-2s\numlinmons) \bigr) h(\xi)
  & \leq \bigl( (s + 1 - \tfrac{\kappa}{\numlinmons})\tau + 3 - \tfrac{\kappa}{\numlinmons} \bigr) \log(2)
         + \tau\sigma_\Lambda + 2\tau H
\notag
\\& \phantom{{}={}} + (\numlinmons-\kappa)(1+\tfrac{1}{\tau}) \sigma_y + \bigl(2 + (\numlinmons-\kappa)(1+\tfrac{1}{\tau}) \bigr) \rho.
\label{eqn:hxi-tau-inequality}
\end{align}
As noted in \cite[VII, 3.5, Rmk.~2]{And89}, this inequality is non-trivial (the coefficient on the left is positive) when $\tau < \sfrac{\kappa}{(2s\numlinmons-\kappa)}$.
(The claim in the proof of \cite[VII, 5.2, Thm.]{And89}\ that, for $\kappa=1$, the inequality is non-trivial when $\tau < \sfrac{(\numlinmons-1)}{(2s\numlinmons-1)}$ appears to be a typo.)

We choose $\tau=\kappa/4s\numlinmons$, which is close to the optimal value.
We obtain:
\begin{align*}
    \frac{1}{\numlinmons} \bigl( \kappa + \tau(\kappa-2s\numlinmons) \bigr)
  & = \frac{1}{\numlinmons} \Bigl( \frac{\kappa}{2} + \frac{\kappa^2}{4s\numlinmons} \Bigr) > \frac{\kappa}{2\numlinmons},
\\  (s+1-\tfrac{\kappa}{\numlinmons})\tau + 3-\tfrac{\kappa}{\numlinmons}
  & < \frac{\kappa}{4\numlinmons} + \frac{\kappa}{4s\numlinmons} + 3 - \frac{\kappa}{\numlinmons} 
    < 3,
\\  (\numlinmons-\kappa)( 1+\tfrac{1}{\tau} ) < 2+(\numlinmons-\kappa)( 1+\tfrac{1}{\tau} )
  & = 2+(\numlinmons-\kappa) \Bigl( \frac{4s\numlinmons}{\kappa} + 1 \Bigr)
    \leq \frac{s\numlinmons}{\kappa}(4\numlinmons-2).
\end{align*}
Substituting these into~\eqref{eqn:hxi-tau-inequality} proves the lemma.
\end{proof}

In order to remove the condition that $y_1,\dotsc,y_\numfuns$ are $\Qbar(X)$-linearly independent from \cref{andre-bombieri-linear-indep}, we need to check that, when we discard some G-functions which are linearly dependent on the others, there is still a $(r_v)$-global relation among the remaining G-functions.
Note that \cref{linear-relations-redundant-variables} is not true for global relations rather than $(r_v)$-global relations, because we might have $R_v(y_1,\dotsc,y_{\nummonomials'}) > R_v(y_1,\dotsc,y_\nummonomials)$, in which case a global relation between $y_1,\dotsc,y_\nummonomials$ involving only $Y_1,\dotsc,Y_{\nummonomials'}$ might not be a global relation between $y_1,\dotsc,y_{\nummonomials'}$.

\begin{lemma} \label{linear-relations-redundant-variables}
Let $y_1,\dotsc,y_\nummonomials \in\powerseries{\Qbar}{X}$.
Let $\xi \in \Qbar \setminus \{0\}$.
If there are $\kappa$ $(r_v)$-global relations of degree~$1$ between $y_1,\dotsc,y_\nummonomials$ at~$\xi$, $\Qbar$-linearly independent modulo the trivial relations at~$\xi$, then there exists a subset $J \subset \{ 1,\dotsc,\nummonomials \}$ such that:
\begin{enumerate}
\item the $y_j$ for $j \in J$ form a $\Qbar(X)$-basis for the $\Qbar(X)$-span of $y_1,\dotsc,y_\nummonomials$;
\item there are $\kappa$ $\Qbar$-linearly independent $(r_v)$-global relations of degree~$1$ between $\{ y_j : j \in J \}$ at~$\xi$.
\end{enumerate}
\end{lemma}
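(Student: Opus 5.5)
We first choose $J$ adapted to the point~$\xi$, and then use trivial relations at~$\xi$ to eliminate the variables $Y_i$ with $i \notin J$ from the given relations. Let $F \subset \Qbar[X]^\nummonomials$ be the module of linear functional relations, that is, tuples $(a_1(X),\dotsc,a_\nummonomials(X))$ with $\sum_i a_i y_i = 0$ in $\powerseries{\Qbar}{X}$.

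\emph{Step 1: $F$ is saturated.} Since $\powerseries{\Qbar}{X}$ is an integral domain, if $(X-\xi)\underline a \in F$ then $\underline a \in F$. Hence $F$ is a saturated submodule of the free module $\Qbar[X]^\nummonomials$. Its rank is $\nummonomials - d$, where $d$ is the $\Qbar(X)$-dimension of the span of $y_1,\dotsc,y_\nummonomials$. Saturation implies that the reduction map $F/(X-\xi)F \to \Qbar^\nummonomials$ is injective. Thus the space $F_\xi$ of degree-$1$ trivial relations at~$\xi$ has $\Qbar$-dimension exactly $\nummonomials-d$. (This uses $\Qbar[X]$ being a PID, so $F$ is free and $F_{(X-\xi)}$ is a direct summand.)

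\emph{Step 2: choose $J$.} Choose $J$ with $\#J = d$ such that the projection of $F_\xi$ onto the coordinates indexed by $J^c = \{1,\dotsc,\nummonomials\} \setminus J$ is an isomorphism. This is possible by linear algebra, since $\dim F_\xi = \#J^c$.
\begin{itemize}
\item[(a)] Lift a basis of $F_\xi$ to elements of $F$. The $J^c$-minor of these lifts is a polynomial that does not vanish at~$\xi$, hence is nonzero.
\item[(b)] It follows that each $y_i$ with $i\notin J$ lies in the $\Qbar(X)$-span of $\{y_j : j \in J\}$. Since $\#J = d$, the $y_j$ for $j\in J$ form a basis, which proves~(1).
\item[(c)] Moreover, $F_\xi$ contains, for each $i \notin J$, a trivial relation of the form $T_i = Y_i - \sum_{j\in J} \alpha_{ij} Y_j$.
\end{itemize}

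\emph{Step 3: eliminate $Y_i$, $i \notin J$.} Let $\relation_k = \sum_i c_{ki} Y_i$ be the given relations, for $k=1,\dotsc,\kappa$, and set
\[ \relation_k' = \relation_k - \sum_{i \notin J} c_{ki} T_i, \]
which involves only the variables $Y_j$ with $j\in J$.

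First, every trivial relation at~$\xi$ is a $v$-adic relation at every relevant place. Indeed, it is the specialisation of a functional relation with polynomial coefficients, and evaluating that power-series identity at $\xi$ inside the disc of convergence gives~$0$. Hence each $\relation_k'$ is a $\hat v$-adic relation at every $(r_v)$-relevant place. These are relations between the same analytic functions $y_j^\van$, $j\in J$.

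Second, $(r_v)$ is still acceptable for $\{y_j : j\in J\}$, because $R_v$ of a subfamily is at least $R_v$ of the whole family. The notion of $(r_v)$-relevant place depends only on $(r_v)$, so each $\relation_k'$ is an $(r_v)$-global relation between $\{y_j : j \in J\}$ at~$\xi$.

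Third, $\relation_k' \equiv \relation_k$ modulo trivial relations. Since the $\relation_k$ are $\Qbar$-linearly independent modulo trivial relations, so are the $\relation'_k$, and in particular they are $\Qbar$-linearly independent. This proves~(2).

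The main obstacle is Step~2: an arbitrary $\Qbar(X)$-basis $J$ would express the $y_i$ with coefficients that may have poles at~$\xi$, so the elimination could not be specialised. The saturation and dimension count of Step~1 are exactly what guarantee that some $J$ admits trivial relations $T_i$ of the required shape at~$\xi$.
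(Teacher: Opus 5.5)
Your proof is correct; it runs the paper's argument on the dual side.

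\emph{The paper's route.} It works with the module $M$ spanned by $y_1,\dotsc,y_\nummonomials$ over the discrete valuation ring $\Qbar[X]_{(X-\xi)}$. It chooses $J$ so that the $y_j$ ($j\in J$) reduce to a basis of $M/(X-\xi)M$, and uses Nakayama to get $u y_i=\sum_{j\in J}t_{ij}y_j$ with $u(\xi)\neq 0$. It then substitutes $Y_i\mapsto\sum_j t_{ij}(\xi)Y_j$ into the $\relation_k$. Independence is proved by turning a hypothetical dependence into an explicit functional relation.

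\emph{Your route.} You work instead with the relation module $F$, replacing Nakayama by saturation together with the count $\dim F_\xi=\nummonomials-d$.

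\emph{Why they agree.} The admissible $J$ are the same in both proofs. Since $M$ is free, reducing $0\to F_{(X-\xi)}\to\Qbar[X]_{(X-\xi)}^{\nummonomials}\to M\to 0$ modulo $X-\xi$ gives an exact sequence $0\to F_\xi\to\Qbar^{\nummonomials}\to M/(X-\xi)M\to 0$. So ``$y_J$ reduces to a basis of the fibre'' is equivalent to ``$F_\xi$ projects isomorphically onto the $J^c$-coordinates''. Likewise, your $T_i$ is exactly the paper's trivial relation $u(\xi)Y_i-\sum_j t_{ij}(\xi)Y_j$ divided by $u(\xi)$.

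\emph{What each buys.} In your elimination, $\relation_k'\equiv\relation_k$ modulo trivial relations holds by construction, so independence is a one-line remark. You also make explicit two points the paper leaves tacit: trivial relations at $\xi$ vanish $\hat v$-adically at every $(r_v)$-relevant place, and $(r_v)$ remains acceptable for the subfamily. The paper's route is a little more economical in selecting $J$, since Nakayama produces the basis without a separate rank computation.
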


\begin{proof}
Let $R$ denote the discrete valuation ring $\Qbar[X]_{(X-\xi)}$ and let $M$ denote the $R$-module spanned by $y_1,\dotsc,y_\nummonomials$.
Since $M$ is a torsion-free module over a DVR, it is free.
Choose $J \subset \{ 1, \dotsc, \nummonomials \}$ such that the images of $\{ y_j : j \in J \}$ form a $\Qbar$-basis for $M/(X-\xi)M$.
By Nakayama's lemma, $\{ y_j : j \in J \}$ is an $R$-basis for~$M$, and hence, a $\Qbar(X)$-basis for $M \otimes_R \Qbar(X)$.
Thus $J$ satisfies~(1).

Choose $t_{ij} \in \Qbar[X]$ (for $i=1, \dotsc, \nummonomials$ and $j \in J$) and $u \in \Qbar[X]$ such that $u(\xi) \neq 0$ and, for each~$i$,
\begin{equation} \label{eqn:linear-functional-relations}
uy_i = \sum_{j \in J} t_{ij}y_j.
\end{equation}

Let $\relation_1,\dotsc,\relation_\kappa$ be the $(r_v)$-global relations of degree~$1$ between $y_1,\dotsc,y_\nummonomials$ at~$\xi$ given by the hypothesis of the lemma.
For each $k=1,\dotsc,\kappa$, let 
\begin{align*}
    \relation'_k(Y_j : j \in J) & = \relation_k \biggl( \sum_{j \in J} t_{1j}(\xi) Y_j, \, \dotsc, \sum_{j \in J} t_{\nummonomials j}(\xi) Y_j \biggr).
\end{align*}
Thanks to~\eqref{eqn:linear-functional-relations}, each $\relation'_k$ is an $(r_v$)-global relation of degree~$1$ between $y_j$ (for $j \in J$) at~$\xi$.
Therefore, to prove~(2), it suffices to show that $\relation'_1, \dotsc, \relation'_\kappa$ are $\Qbar$-linearly independent.

Indeed, assume that there is a linear dependence
\begin{equation} \label{eqn:Q'-dependence}
b_1\relation'_1 + \dotsb + b_\kappa \relation'_\kappa = 0,
\end{equation}
where $b_1, \dotsc, b_\kappa \in \Qbar$ are not all zero.
Let
\[ \tilde\relation(X)(Y_1, \dotsc, Y_\nummonomials) = \sum_{k=1}^\kappa b_k \relation_k \Bigl( uY_1 - \sum_{j \in J} t_{1j} Y_j, \, \dotsc, \, uY_\nummonomials - \sum_{j \in J} t_{\nummonomials j} Y_j \Bigr). \]
Thanks to~\eqref{eqn:linear-functional-relations}, 
\[ \tilde\relation(X)(y_1,\dotsc,y_\nummonomials) = \sum_{k=1}^\kappa b_k \relation_k(0,\dotsc,0) = 0. \]
Thus, $\tilde\relation$ is a functional relation between $y_1,\dotsc,y_\nummonomials$, so $\tilde \relation(\xi)(Y_1,\dotsc,Y_\nummonomials)$ is a trivial relation at~$\xi$.
However, by the definition of~$\relation'_k$ and by~\eqref{eqn:Q'-dependence},
\[ \tilde \relation(\xi)(Y_1,\dotsc,Y_\nummonomials) = \sum_{k=1}^\kappa b_k \bigl( u(\xi) \relation_k - \relation'_k \bigr) = \sum_{k=1}^\kappa b_k u(\xi) \relation_k. \]
Since $u(\xi) \neq 0$, this contradicts the hypothesis that $\relation_1,\dotsc,\relation_\kappa$ are $\Qbar$-linearly independent modulo the trivial relations at~$\xi$.
\end{proof}

Now we are ready to prove \cref{andre-bombieri}.

\begin{proof}[Proof of \cref{andre-bombieri}]
Write $z_i = m_i(y_1,\dotsc,y_\numfuns) \in \powerseries{\Qbar}{X}$ for $1 \leq i \leq \nummonomials$.
Apply \cref{linear-relations-redundant-variables} to $z_1,\dotsc,z_\nummonomials$ and~$\xi$ to obtain $J \subset \{ 1,\dotsc,\nummonomials \}$.

By the hypothesis of the theorem, the $\Qbar(X)$-linear span of $z_j$ (for $j \in J$) is a $\partial$-submodule of $\powerseries{\Qbar}{X}$.
Hence, there exists a differential operator~$\Lambda_\cM$ with solution $(z_j : j \in J)$.
Let $\Lambda^{\odot\delta}$ denote the $\delta$-th symmetric power of the differential operator~$\Lambda$ whose solution is $(y_1,\dotsc,y_\numfuns)$.
Thus, $\Lambda_\cM$ is a sub-operator of~$\Lambda^{\odot\delta}$.

According to \cite[VII, 4.1]{And89}, we have
\begin{align*}
    \rho(z_j : j \in J) & \leq \rho(\underline y^{\odot \delta}) = \rho(\underline y),
\\  \sigma(z_j : j \in J) & \leq \sigma(\underline y^{\odot \delta}) \leq \sigma(\underline y)(\log(\delta)+1),
\\  \sigma(\Lambda_\cM) & \leq \sigma(\Lambda^{\odot \delta}) \leq \sigma(\Lambda)(\log(\delta)+1),
\\  \Sin(\Lambda_\cM) & \subset \Sin(\Lambda^{\odot\delta}) = \Sin(\Lambda).
\end{align*}

Thus the theorem follows from \cref{andre-bombieri-linear-indep} applied to $\{ z_j : j \in J \}$.
\end{proof}

\section{Explicit height bounds for super-strongly non-trivial global relations} \label{sec:bounds-super-strongly-nt}

In this section, we state and prove explicit height bounds when there are \emph{super-strongly} non-trivial $(r_v)$-global relations between G-functions.
The idea is that, given a global relation~$\relation$ of degree~$\delta$, we can obtain many global relations of degree~$\delta+\gamma$ by multiplying $\relation$ by different polynomials of degree~$\gamma$.
For suitably chosen~$\gamma$, we get a better bound by applying \cref{andre-bombieri} to these relations of degree~$\delta+\gamma$ instead of the original~$P$.
The ``super-strongly non-trivial'' condition ensures that the multiples of~$\relation$ are linearly independent modulo trivial relations.

Hilbert functions are used to count the dimensions of the spaces of polynomials or of multiples of~$\relation$ of degree~$\delta+\gamma$, modulo trivial relations.
We recall the definition.

\begin{definition}
Let $K$ be a field.
Let $I$ be a homogeneous ideal in $K[Y_1,\dotsc,Y_\numfuns]$.
We write $HF_I \colon \ZZ_{\geq 0} \to \ZZ_{\geq 0}$ to denote the \defterm{Hilbert function} of the graded ring $K[Y_1,\dotsc,Y_\numfuns]/I$, that is,
\[ HF_I(\gamma) = \dim_K ( K[Y_1,\dotsc,Y_\numfuns]_\gamma/I_\gamma ), \]
where the subscript~$\gamma$ means the subspace of homogeneous polynomials of degree~$\gamma$.
\end{definition}

 \subsection{Statements for super-strongly non-trivial global relations} \label{ssec:ab-super-strong-start}

We state several versions of explicit height bounds for super-strongly non-trivial relations.
\Cref{andre-bombieri-HF} requires knowledge of specific values of the Hilbert function.
It involves a parameter~$\gamma$, which may be chosen to optimise the bound.
\Cref{andre-bombieri-HF-bounds} is stated in terms of bounds for the Hilbert function, and makes a good choice of~$\gamma$ for bounds of the given form.
The last two versions remove Hilbert functions from the statements: \cref{andre-bombieri-strong-general} applies general bounds on the Hilbert function, while \cref{andre-bombieri-homog-alg-ind} improves the bound in the special case of no functional relations.

\begin{theorem} \label{andre-bombieri-HF}
Let $\delta \in \ZZ_{\geq 1}$ and $\gamma \in \ZZ_{\geq 0}$.
Let $I \subset \Qbar(X)[Y_1,\dotsc,Y_\numfuns]$ be the homogeneous ideal generated by the functional relations between $y_1,\dotsc,y_\numfuns$.

Let $\xi \in \Qbar \setminus \{0\}$ be an ordinary point or good apparent singularity of~$\Lambda$.
Suppose that there exists a \textbf{super-strongly} non-trivial $(r_v)$-global relation of degree~$\delta$ between $y_1,\dotsc,y_\numfuns$ at~$\xi$.
Then the bound~\eqref{eqn:hxi-general-bound} holds with $\delta$ replaced by $\delta+\gamma$ and with $\nummonomials = HF_I(\delta+\gamma)$ and $\kappa = HF_I(\gamma)$.
\end{theorem}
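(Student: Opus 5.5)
The plan is to reduce \cref{andre-bombieri-HF} to \cref{andre-bombieri}, applied in degree $\delta+\gamma$. Let $\relation$ be the given super-strongly non-trivial $(r_v)$-global relation of degree~$\delta$.

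\textbf{Spanning monomials.} I take $m_1,\dotsc,m_\nummonomials$ to be monomials of degree $\delta+\gamma$ whose images form a $\Qbar(X)$-basis of $\Qbar(X)[Y]_{\delta+\gamma}/I_{\delta+\gamma}$; there are exactly $HF_I(\delta+\gamma)$ of them. The $\Qbar(X)$-span of the $m_i(y_1,\dotsc,y_\numfuns)$ equals the span of all degree-$(\delta+\gamma)$ monomials evaluated at~$\underline y$. That span is closed under $d/dX$ by the Leibniz rule, since the span of the $y_i$ is closed under $d/dX$.

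\textbf{Relations of degree $\delta+\gamma$.} For every homogeneous $\relfactor$ of degree~$\gamma$, the product $\relation\relfactor$ is an $(r_v)$-global relation, because each $v$-adic evaluation of $\relation$ vanishes. I choose $\relfactor_1,\dotsc,\relfactor_\kappa$ of degree~$\gamma$ that are $\Qbar$-linearly independent modulo trivial relations at~$\xi$, with $\kappa$ equal to the dimension of degree-$\gamma$ polynomials modulo degree-$\gamma$ trivial relations.

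I then need this dimension to equal $HF_I(\gamma)$. Let $\tilde I\subset\Qbar[X][Y]$ be the prime ideal of functional relations from \cref{functional-relations-ideal-is-prime}. The quotient $\Qbar[X][Y]_\gamma/\tilde I_\gamma$ is a finitely generated torsion-free $\Qbar[X]$-module, hence free, and its rank is $HF_I(\gamma)$ since $I=\tilde I\otimes\Qbar(X)$. Specialising at $X=\xi$ gives $\Qbar[Y]_\gamma/(\text{trivial relations})_\gamma$, because by definition the trivial relations are exactly the image of $\tilde I_\gamma$. This specialisation has dimension equal to the rank. The same argument applies in degree $\delta+\gamma$.

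Super-strong non-triviality now gives linear independence. Suppose $\sum b_k\relation\relfactor_k=\relation\bigl(\sum b_k\relfactor_k\bigr)$ were trivial. Then $\sum b_k\relfactor_k$ would be trivial, so all $b_k=0$.

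\textbf{Span condition and conclusion.} Hypothesis~(1) of \cref{andre-bombieri} asks that the $\relation\relfactor_k$ lie in the $\Qbar$-span of $m_1,\dotsc,m_\nummonomials$. This is not literally true, and I expect it to be the main obstacle. To fix it I reduce modulo trivial relations. Because $\tilde I$ is saturated and the quotient is free, the chosen monomials form a $\Qbar[X]_{(X-\xi)}$-basis of the localised quotient, as in the Nakayama step of \cref{linear-relations-redundant-variables}. This requires choosing the $m_i$ so that their specialisations at $\xi$ form a basis mod trivial relations, which is possible.

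So each $\relation\relfactor_k$ equals, modulo trivial relations at~$\xi$, an element $\relation'_k$ of the span of the $m_i$. Then $\relation'_k$ is still $(r_v)$-global, since the trivial relations at~$\xi$ vanish $v$-adically at~$\xi$: they are specialisations of functional relations, which hold in the disc of convergence. The $\relation'_k$ remain linearly independent modulo trivial relations, and I need $\kappa<\nummonomials$. When instead $\kappa=\nummonomials$, everything is a relation, which is the degenerate case excluded by the strictness in \cref{andre-bombieri}. Applying \cref{andre-bombieri} with degree $\delta+\gamma$ then yields the bound.
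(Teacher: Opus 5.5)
Your proposal is correct and is essentially the paper's proof. You pick monomials of degree $\delta+\gamma$ whose specialisations form a basis modulo the trivial relations at~$\xi$. You then use freeness of $\Qbar[X][Y_1,\dotsc,Y_\mu]_\gamma/\tilde I_\gamma$ over $\Qbar[X]$ and Nakayama's lemma (a repackaging of \cref{lift-basis} and \cref{hilbert-function-specialisation}) to see that these monomials form a $\Qbar(X)$-basis modulo~$I$ and that both dimensions equal $HF_I$. Finally you reduce the products of $P$ with a basis of degree-$\gamma$ forms, modulo trivial relations, into their span, get linear independence from super-strong non-triviality, and apply \cref{andre-bombieri}.

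The only loose end is the case $\kappa=\lambda$, which the paper also passes over; saying it is ``excluded by strictness'' does not by itself give the bound. It is easily settled. In that case every monomial of degree $\delta+\gamma$ is an $(r_v)$-global relation, so $y_i^\vhatan(\xi)=0$ for all $i$ at every $(r_v)$-relevant place~$\hat v$. Since $\underline y\neq 0$ and $\xi$ is an ordinary point or good apparent singularity, this means there is no such place. Hence $h(\xi)=h(1/\xi)\leq\rho$, which is below the bound.
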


The most convenient form of bounds for the Hilbert function is
\begin{equation} \label{eqn:HF-general-estimate}
A\binom{\gamma+\eta}{\eta} \leq HF_I(\gamma) \leq B\binom{\gamma+\eta}{\eta},
\end{equation}
where $\eta$ is the dimension of $V$, the closed subscheme of~$\PP^{\numfuns-1}$ cut out by~$I$.
Bounds of this form are consistent with the well-known asymptotic
\begin{equation} \label{eqn:HF-asymptotic}
HF_I(\gamma) \sim \frac{\deg(V)}{\dim(V)!} \gamma^{\dim(V)} \text{ as } \gamma \to \infty.
\end{equation}

Given a bound for the Hilbert function of the form~\eqref{eqn:HF-general-estimate}, 
we choose $\gamma = 2\delta - \lceil \tfrac{1}{2}\eta \rceil$, as long as this is positive. When $\delta$ is large compared with~$\eta$, this appears to be the optimal value for~$\gamma$.
In particular, if $\eta \leq 4$, then this is the optimal $\gamma$ for all $\delta$ such that it is positive.
Using this value of~$\gamma$ and bounds for binomial coefficients (\cref{binom-delta-gamma-bounds}), as well as the fact that $\delta+\gamma \leq 3\delta$, we deduce the following theorem from \cref{andre-bombieri-HF}.

\begin{theorem} \label{andre-bombieri-HF-bounds}
Let $I \subset \Qbar(X)[Y_1,\dotsc,Y_\numfuns]$ be the homogeneous ideal generated by the functional relations between $y_1,\dotsc,y_\numfuns$.
Let $\eta$ be a positive integer and 
let $\delta$ be an integer satisfying $\delta \geq \tfrac{1}{4}(\eta+2)$.
Let $A$, $B$ be positive real numbers satisfying~\eqref{eqn:HF-general-estimate}
for all $\gamma \geq 2\delta-\lceil \frac{1}{2}\eta \rceil$.

Let $\xi \in \Qbar \setminus \{0\}$ be an ordinary point or good apparent singularity of~$\Lambda$.
Suppose that there exists a super-strongly non-trivial $(r_v)$-global relation of degree~$\delta$ between $y_1,\dotsc,y_\numfuns$ at~$\xi$.
Then
\begin{align}
    h(\xi)
  & < 8s\frac{B^3}{A^2} \refC{hxi-strong-mu-kappa}(\eta)^2 \refC{hxi-strong-mu-mult}(\eta) \delta^\eta
    \max \Bigl\{ (\log(3\delta)+1) \sigma_y + \rho, \; \frac{A}{Bs\refC{hxi-strong-mu-kappa}(\eta)} (\log(3\delta)+2) \theta_\Lambda \Bigr\},
\end{align}
where
$\createC{hxi-strong-mu-kappa} \createC{hxi-strong-mu-mult}$
\begin{align*}
    \refC{hxi-strong-mu-kappa}(\eta)
  & =
    \begin{cases}
        \displaystyle \vphantom{\Bigg(} \frac{2}{(4k+1)(4k)} \binom{5k+2}{4k-1}
      & \text{ if } \eta=4k-1 \text{ with } k \in \ZZ_{\geq1},
    \\  \displaystyle \vphantom{\Bigg(} \frac{1}{4k+2} \binom{5k+3}{4k+1}
      & \text{ if } \eta=4k+1 \text{ with } k \in \ZZ_{\geq0},
    \\  \vphantom{\dfrac{0}{0}} \tfrac{3}{2} \refC{hxi-strong-mu-kappa}(\eta-1)
      & \text{ if $\eta$ is even,}
    \end{cases}
\end{align*}
\begin{align*}
    \refC{hxi-strong-mu-mult}(\eta)
  & = 
    \begin{cases}
        \displaystyle \vphantom{\Bigg(} \frac{3^\eta}{\eta!}
      & \text{ if $\eta$ is even,}
    \\  \displaystyle \vphantom{\Bigg(} \frac{5}{3} \cdot \frac{3^\eta}{\eta!}
      & \text{ if $\eta$ is odd.}
    \end{cases}
\end{align*}
\end{theorem}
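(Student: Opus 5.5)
The plan is to deduce the theorem from \cref{andre-bombieri-HF}, in its simplified form analogous to \cref{andre-bombieri-simp}, with the choice $\gamma = 2\delta - \lceil \tfrac12\eta\rceil$. The hypothesis $\delta \geq \tfrac14(\eta+2)$ gives $\gamma \geq 0$, and in fact $\gamma \geq 1$ up to rounding. \Cref{andre-bombieri-HF} then says that \eqref{eqn:hxi-general-bound} holds with $\delta$ replaced by $\delta+\gamma$, $\nummonomials = HF_I(\delta+\gamma)$ and $\kappa = HF_I(\gamma)$. Applying the argument of \cref{andre-bombieri-simp} verbatim, we get
\[ h(\xi) < 8s\frac{\nummonomials^3}{\kappa^2}\max\Bigl\{ (\log(\delta+\gamma)+1)\sigma_y+\rho,\; \frac{\kappa}{s\nummonomials}(\log(\delta+\gamma)+2)\theta_\Lambda\Bigr\}. \]
Since $\delta+\gamma \leq 3\delta$, we may replace $\log(\delta+\gamma)$ by $\log(3\delta)$.

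Next we use \eqref{eqn:HF-general-estimate}; it applies because both $\gamma$ and $\delta+\gamma$ are at least $2\delta-\lceil\eta/2\rceil$. This gives
\[ \frac{\nummonomials^3}{\kappa^2} \leq \frac{B^3}{A^2}\binom{\delta+\gamma+\eta}{\eta}^3\binom{\gamma+\eta}{\eta}^{-2}, \]
and, in the second term,
\[ \frac{\kappa}{\nummonomials} \leq \frac{B}{A}\binom{\gamma+\eta}{\eta}\binom{\delta+\gamma+\eta}{\eta}^{-1}. \]
Multiplied out, the second entry of the max becomes
\[ 8s\frac{B^3}{A^2}\cdot\frac{B\,\binom{\delta+\gamma+\eta}{\eta}^2}{A\,\binom{\gamma+\eta}{\eta}}\cdot\frac{1}{s}(\log(3\delta)+2)\theta_\Lambda . \]
To put this in the stated shape, the binomial estimates must show
\[ \binom{\delta+\gamma+\eta}{\eta}^3\binom{\gamma+\eta}{\eta}^{-2} \leq \refC{hxi-strong-mu-kappa}(\eta)^2\refC{hxi-strong-mu-mult}(\eta)\,\delta^\eta \]
and
\[ \binom{\delta+\gamma+\eta}{\eta}\binom{\gamma+\eta}{\eta}^{-1} \geq \refC{hxi-strong-mu-kappa}(\eta) \]
(up to compatible factors). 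These are exactly the contents of the auxiliary \cref{binom-delta-gamma-bounds}.

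The main obstacle is therefore \cref{binom-delta-gamma-bounds}: bounding
\[ \binom{3\delta-\lceil\eta/2\rceil+\eta}{\eta} \leq \refC{hxi-strong-mu-mult}(\eta)\,\delta^\eta \]
for the upper bound, and bounding the ratio
\[ \frac{\binom{\delta+\gamma+\eta}{\eta}}{\binom{\gamma+\eta}{\eta}} = \prod_{i=1}^{\eta}\frac{\delta+\gamma+i}{\gamma+i} \]
from below uniformly in $\delta \geq (\eta+2)/4$. For the upper bound I would write the binomial as $\frac{1}{\eta!}\prod_{i=1}^{\eta}(3\delta-\lceil\eta/2\rceil+i)$ and pair the factors symmetrically about $3\delta$. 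By AM–GM, each pair is at most $(3\delta)^2$ when $\eta$ is even, which gives $3^\eta\delta^\eta/\eta!$. When $\eta$ is odd there is an unpaired factor $3\delta+\tfrac12$, and this is where the constant $\tfrac53$ enters, using $\delta\geq 1$ and the small-$\delta$ cases.

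For the lower bound on the ratio, each factor $1+\delta/(\gamma+i)$ is decreasing in $\delta$ relative to $\gamma\approx 2\delta$, so the product is minimised at the smallest admissible $\delta$, namely $\delta=\lceil(\eta+2)/4\rceil$. There the product evaluates to the binomial expressions in the definition of $\refC{hxi-strong-mu-kappa}(\eta)$: for $\eta = 4k\pm1$ one gets $\delta=k$ or $k+1$ and $\gamma = 2\delta-\lceil\eta/2\rceil$. The even case follows by comparing with $\eta-1$, giving the factor $\tfrac32$. The parts I would check most carefully are the monotonicity argument and how the resulting constants combine into $\refC{hxi-strong-mu-kappa}(\eta)^2\refC{hxi-strong-mu-mult}(\eta)$.
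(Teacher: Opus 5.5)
Your reduction matches the paper's: apply \cref{andre-bombieri-HF} with $\gamma=2\delta-\lceil\frac12\eta\rceil$ (the hypothesis on $\delta$ is exactly $\gamma\ge1$), simplify as in \cref{andre-bombieri-simp}, then use $\delta+\gamma\le3\delta$, \eqref{eqn:HF-general-estimate} and \cref{binom-delta-gamma-bounds}. The gap is in which binomial inequalities are needed, and in how you propose to prove them. Write $\alpha=\binom{\delta+\gamma+\eta}{\eta}$, $\beta=\binom{\gamma+\eta}{\eta}$, $c=\refC{hxi-strong-mu-kappa}(\eta)$ and $c'=\refC{hxi-strong-mu-mult}(\eta)$. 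Do not bound $\kappa/\lambda$ on its own: that uses $\kappa\le B\beta$ and $\lambda\ge A\alpha$, and loses a factor $(B/A)^2$. Instead, distribute the prefactor into the max first. The second entry then becomes $8\frac{\lambda^2}{\kappa}(\log(3\delta)+2)\theta_\Lambda$, and
\[ \frac{\lambda^2}{\kappa}\le\frac{B^2}{A}\,\alpha\cdot\frac{\alpha}{\beta}\le\frac{B^2}{A}\,c'\delta^\eta\,c = s\frac{B^3}{A^2}c^2c'\delta^\eta\cdot\frac{A}{Bsc}. \]
For the first entry you have $\frac{\lambda^3}{\kappa^2}\le\frac{B^3}{A^2}\alpha\bigl(\frac{\alpha}{\beta}\bigr)^2$. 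So all you need are the two \emph{upper} bounds $\alpha\le c'\delta^\eta$ and $\alpha/\beta\le c$, which is exactly what \cref{binom-delta-gamma-bounds} states. The lower bound $\alpha/\beta\ge c$ that you ask for is false. For $\eta=3$, one has $\alpha/\beta=35/10=\frac72=c$ at $\delta=2$, but $\alpha/\beta=120/35<\frac72$ at $\delta=3$.

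Your sketch of the lemma has matching errors; throughout, let $\ell=\lfloor\eta/2\rfloor$.
\begin{itemize}
\item A ratio that decreases in $\delta$ attains its \emph{maximum} at the smallest admissible $\delta$, and that is what gives the upper bound $c$.
\item The individual factors $1+\delta/(2\delta+m)$, with $m=i-\lceil\eta/2\rceil$, increase with $\delta$ when $m>0$. So you need the paper's pairing: for $\eta=2\ell+1$ the ratio is $\prod_{i=-\ell}^{\ell}\frac{3\delta+i}{2\delta+i}=\frac32\prod_{j=1}^{\ell}\bigl(\frac94+\frac54\cdot\frac{j^2}{4\delta^2-j^2}\bigr)$, which is visibly decreasing.
\item The smallest admissible $\delta$ is $k+1$ for both $\eta=4k\pm1$, not $k$.
\item For even $\eta$ the ratio increases with $\delta$, so the reduction to $\eta-1$ via $\frac{3\delta+\ell}{2\delta+\ell}\le\frac32$ is essential, not optional.
\end{itemize}

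In part (a) your parities are reversed. For odd $\eta$, the factors $3\delta-\ell,\dots,3\delta+\ell$ are centred at $3\delta$, and the unpaired one is $3\delta$ itself. So AM--GM gives $\alpha\le3^\eta\delta^\eta/\eta!$ with no $\frac53$. For even $\eta=2\ell$, the factors are centred at $3\delta+\frac12$, and the pairs are not bounded by $(3\delta)^2$. Indeed, for $\eta=2$ one has $\alpha=\binom{3\delta+1}{2}>\frac92\delta^2$. The paper handles even $\eta$ via $\alpha(\eta,\delta)=\frac{3\delta+\ell}{\eta}\alpha(\eta-1,\delta)$ together with $1+\frac{\ell}{3\delta}<\frac53$. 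So the factor $\frac53$ belongs to even $\eta$. The parity labels in the displayed definition of $c'$ appear to be swapped; the paper itself later uses $\refC{hxi-strong-mu-mult}(3)=\frac92=3^3/3!$. The even-$\eta$ estimate you tried to force is false.
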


\begin{remark}
It is clear that $\refC{hxi-strong-mu-kappa}(\eta) \leq \refC{hxi-smk-mult} 2^{5\eta/4} \leq \refC{hxi-smk-mult} 3^\eta$ for some $\newC{hxi-smk-mult}$, so $\refC{hxi-strong-mu-kappa}(\eta)^2 \refC{hxi-strong-mu-mult}(\eta) \leq \newC* 3^{3\eta} / \eta!$. This goes to zero as $\eta \to \infty$.
\end{remark}

The following corollary requires no information about the functional relations except the dimension and degree of the projective variety which they cut out.
Since $\eta \leq \numfuns-1$, this is an explicit version of the bound for~$\Sha'_\delta$ in \cite[VII, 5.2, Thm.]{And89}\ (restricted to super-strongly non-trivial relations).

\begin{corollary} \label{andre-bombieri-strong-general}
Let $\eta$ and~$D$ denote the dimension and the degree, respectively, of the closed subvariety of~$\PP^{\numfuns-1}_{\Qbar(X)}$ cut out by the functional relations between $y_1,\dotsc,y_\numfuns$.
Suppose that $\eta \geq 1$ (in other words, at least one of the ratios $y_i/y_j$ is transcendental over~$\Qbar(X)$).
Let $\delta$ be an integer satisfying $\delta \geq D + \tfrac{1}{4}\eta$.

Let $\xi \in \Qbar \setminus \{0\}$ be an ordinary point or good apparent singularity of~$\Lambda$.
Suppose that there exists a super-strongly non-trivial $(r_v)$-global relation of degree~$\delta$ between $y_1,\dotsc,y_\numfuns$ at~$\xi$.
Then
\begin{align*}
    h(\xi)
  & < 2^{2\eta+3}sD \refC{hxi-strong-mu-kappa}(\eta)^2 \refC{hxi-strong-mu-mult}(\eta) \delta^\eta
    \max \Bigl\{ (\log(3\delta)+1) \sigma_y + \rho, \; \frac{1}{2^\eta s\refC{hxi-strong-mu-kappa}(\eta)} (\log(3\delta)+2) \theta_\Lambda \Bigr\}.
\end{align*}
\end{corollary}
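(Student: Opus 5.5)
The plan is to deduce \cref{andre-bombieri-strong-general} from \cref{andre-bombieri-HF-bounds}. To do this, I will exhibit constants $A,B$ satisfying \eqref{eqn:HF-general-estimate} with $B/A$ controlled by~$D$, valid for all $\gamma \geq 2\delta - \lceil \tfrac12\eta\rceil$. The aim is to match the shape of the stated bound. In \cref{andre-bombieri-HF-bounds} the leading factor is $8sB^3/A^2$. The corollary's leading factor is $2^{2\eta+3}sD = 8s \cdot 2^{2\eta}D$. Also, $A/B$ in the second term of the max must equal $2^{-\eta}$. This suggests taking $B = 2^\eta D$ and $A = D$, so that $B^3/A^2 = 2^{3\eta}D$. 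Since that overshoots by $2^\eta$, the better choice is $A = D/2^\eta$ and $B = D$, giving $B^3/A^2 = 2^{2\eta}D$ and $A/B = 2^{-\eta}$, exactly as required.

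Step 1 is to reduce to the geometric Hilbert function. The variety $V \subset \PP^{\numfuns-1}_{\Qbar(X)}$ is cut out by~$I$, and $I$ is prime by \cref{functional-relations-ideal-is-prime} (after extending scalars from $\Qbar[X]$ to $\Qbar(X)$; this is a localisation, so primality is preserved). Hence $V$ is an irreducible variety of dimension~$\eta$ and degree~$D$, and $HF_I$ is the Hilbert function of its homogeneous coordinate ring. After passing to an algebraic closure of $\Qbar(X)$, $V$ may become reducible but remains equidimensional of degree~$D$, and the Hilbert function is unchanged.

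Step 2 establishes the upper bound $HF_I(\gamma) \leq D\binom{\gamma+\eta}{\eta}$ for all $\gamma \geq 0$. This is a standard result (Chardin / Sombra / Chardin–Philippon type, for equidimensional varieties). I would prove it by a generic linear projection $V \to \PP^\eta$, which is finite of degree~$D$. The coordinate ring of~$V$ is then a torsion-free module over the polynomial ring in $\eta+1$ variables, of generic rank~$D$. Such a module embeds into a free module of rank~$D$, possibly with a degree shift. Controlling that shift is the delicate point; if the shift cannot be controlled, I would instead cite the known inequality $HF(\gamma) \leq D\binom{\gamma+\eta}{\eta}$ from the literature.

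Step 3 establishes the lower bound $HF_I(\gamma) \geq 2^{-\eta}D\binom{\gamma+\eta}{\eta}$ for $\gamma \geq 2\delta - \lceil\eta/2\rceil$. This is where the hypothesis $\delta \geq D + \tfrac14\eta$ enters, since it gives $\gamma \geq 2D - 1 \geq D-1$. A known lower bound (Chardin–Philippon) states that $HF(\gamma) \geq \binom{\gamma+\eta+1}{\eta+1} - \binom{\gamma-D+\eta+1}{\eta+1}$ for $\gamma \geq D-1$; this expression is the Hilbert function of a hypersurface of degree~$D$ in $\PP^{\eta+1}$. By projecting $V$ generically to $\PP^{\eta+1}$, its image is a hypersurface of degree~$D$, and the coordinate ring of this hypersurface injects into that of~$V$. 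That injection gives the inequality directly.

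The remaining task is elementary: show that this difference of binomials is at least $2^{-\eta}D\binom{\gamma+\eta}{\eta}$ when $\gamma \geq 2D-1$. Writing the difference as $\sum_{j=0}^{D-1}\binom{\gamma-j+\eta}{\eta}$, each term with $j \leq D-1 \leq \gamma/2$ satisfies $\binom{\gamma-j+\eta}{\eta} \geq 2^{-\eta}\binom{\gamma+\eta}{\eta}$, because every factor ratio $(\gamma-j+i)/(\gamma+i)$ is at least $\tfrac12$. Summing the $D$ terms gives the bound.

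Step 4 plugs $A = 2^{-\eta}D$ and $B = D$ into \cref{andre-bombieri-HF-bounds}. The hypothesis $\delta \geq \tfrac14(\eta+2)$ follows from $\delta \geq D + \tfrac14\eta$, since $D \geq 1$ and $\delta$ is an integer. I expect the main obstacle to be Step 2, namely justifying the clean upper bound $D\binom{\gamma+\eta}{\eta}$ for every~$\gamma$ rather than only asymptotically. I would rely on a citation there.
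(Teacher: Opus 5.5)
Your proposal is correct and follows the paper's proof. The paper also takes $A = 2^{-\eta}D$ and $B = D$, using Chardin's upper bound and Nesterenko's lower bound $\binom{\gamma+\eta+1}{\eta+1} - \binom{\gamma+\eta+1-D}{\eta+1}$ (packaged as \cref{HF-bounds-2}, applicable since $2\delta - \lceil \tfrac{1}{2}\eta \rceil \geq 2D$), and feeds them into \cref{andre-bombieri-HF-bounds}. The differences are only in detail: you justify the lower bound by generic projection onto a degree-$D$ hypersurface and estimate the binomial difference termwise via $\sum_{j<D}\binom{\gamma-j+\eta}{\eta}$, whereas the paper cites Nesterenko and uses $D\binom{\gamma+\eta-D}{\eta} \geq (1-D/\gamma)^\eta D\binom{\gamma+\eta}{\eta}$.
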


\begin{proof}
We have $2\delta - \lceil \tfrac{1}{2}\eta \rceil \geq 2D$.
Hence, by \cref{HF-bounds-2}, the Hilbert function of the ideal generated by functional relations satisfies \eqref{eqn:HF-general-estimate} for $\gamma = 2\delta - \lceil \tfrac{1}{2} \eta \rceil$ with
\[ A = 2^{-\eta}D, \quad B=D. \]
Thus $B/A = 2^\eta$ and
$B^3/A^2 = 4^\eta D$.
The corollary follows by \cref{andre-bombieri-HF-bounds}.
\end{proof}

When $y_1,\dotsc,y_\numfuns$ are homogeneously algebraically independent, that is, there are no non-zero functional relations, the Hilbert function is known exactly:
\begin{equation*}
HF_{\{0\}}(\gamma) = \dim(K[Y_1,\dotsc,Y_\numfuns]_\gamma) = \binom{\gamma+\numfuns-1}{\numfuns-1}.
\end{equation*}
In other words, \eqref{eqn:HF-general-estimate} holds with $A=B=1$ and $\eta=\numfuns-1$.
In this case, the ideal of trivial relations at any~$\xi$ is zero, so non-trivial relations are automatically super-strongly non-trivial.
Hence we obtain the following corollary of \cref{andre-bombieri-HF-bounds}.

\begin{corollary} \label{andre-bombieri-homog-alg-ind}
Suppose that $\numfuns \geq 2$.
Let $\delta$ be an integer satisfying $\delta \geq \tfrac{1}{4}(\numfuns+1)$.
Suppose that $y_1,\dotsc,y_\numfuns$ are homogeneously algebraically independent over~$\Qbar(X)$.
Let $\xi \in \Qbar \setminus \{0\}$ be an ordinary point or good apparent singularity of~$\Lambda$.
Suppose that there exists a non-zero $(r_v)$-global relation of degree~$\delta$ between $y_1,\dotsc,y_\numfuns$ at~$\xi$.
Then
\begin{align}
    h(\xi)
  & < 8s \refC{hxi-strong-mu-kappa}(\numfuns-1)^2 \refC{hxi-strong-mu-mult}(\numfuns-1) \delta^{\numfuns-1}
\notag
\\& \qquad\quad \cdot \max \Bigl\{ (\log(3\delta)+1) \sigma_y + \rho, \; \frac{1}{s\refC{hxi-strong-mu-kappa}(\numfuns-1)} (\log(3\delta)+2) \theta_\Lambda \Bigr\}.
\label{eqn:hxi-strong-bound}
\end{align}
\end{corollary}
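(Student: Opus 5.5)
This is a direct specialisation of \cref{andre-bombieri-HF-bounds}, so the plan is to check its hypotheses with $A=B=1$ and $\eta=\numfuns-1$, and then substitute these values into its conclusion.

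First, since $y_1,\dotsc,y_\numfuns$ are homogeneously algebraically independent over $\Qbar(X)$, the ideal $I\subset\Qbar(X)[Y_1,\dotsc,Y_\numfuns]$ generated by functional relations is zero. Indeed, a functional relation in $\Qbar[X][Y]$ that is homogeneous in $Y$ and non-zero would give a non-trivial homogeneous algebraic dependence. Hence
\[ HF_I(\gamma)=\binom{\gamma+\numfuns-1}{\numfuns-1}. \]
So \eqref{eqn:HF-general-estimate} holds for every $\gamma\geq 0$ with $A=B=1$ and $\eta=\numfuns-1$. In particular it holds for all $\gamma \geq 2\delta-\lceil\frac12\eta\rceil$. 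The hypothesis $\numfuns\geq2$ gives $\eta\geq1$, so $\eta$ is a positive integer. The condition $\delta\geq\frac14(\numfuns+1)=\frac14(\eta+2)$ is exactly the degree hypothesis of \cref{andre-bombieri-HF-bounds}.

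Second, I need the relation to be super-strongly non-trivial. Since there are no non-zero functional relations, the only trivial relation at $\xi$ is $0$, because trivial relations are specialisations of functional relations. The relation $\relation$ is assumed non-zero, hence it is non-trivial. For super-strong non-triviality, suppose $\relation\relfactor$ is trivial, so $\relation\relfactor=0$. Since $\Qbar[Y]$ is an integral domain and $\relation\neq0$, this forces $\relfactor=0$, which is trivial. So no $\relfactor$ as in the definition exists, and $\relation$ is super-strongly non-trivial.

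Finally, I apply \cref{andre-bombieri-HF-bounds} with these data. Substituting $B^3/A^2=1$, $A/B=1$ and $\eta=\numfuns-1$ into its bound yields exactly \eqref{eqn:hxi-strong-bound}. I expect no real obstacle here. The only point needing care is the identification $I=0$, that is, checking that "homogeneously algebraically independent" means there are no non-zero homogeneous polynomial relations over $\Qbar(X)$. Clearing denominators moves such a relation into $\Qbar[X][Y]$, so this identification holds.
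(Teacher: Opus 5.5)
Your proposal is correct and follows essentially the same route as the paper. The paper also observes that the absence of functional relations gives $HF_I(\gamma)=\binom{\gamma+\mu-1}{\mu-1}$, so \eqref{eqn:HF-general-estimate} holds with $A=B=1$ and $\eta=\mu-1$; it also notes that the only trivial relation at $\xi$ is $0$, so a non-zero relation is automatically super-strongly non-trivial, and then applies \cref{andre-bombieri-HF-bounds}.
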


\subsection{Proof of Theorem~\ref{andre-bombieri-HF}}

The proof of \cref{andre-bombieri-HF} relies on the fact that the Hilbert function of a homogeneous ideal of $K(X)[Y_1,\dotsc,Y_\numfuns]$ is unchanged by specialisation $X \mapsto \xi$, as described by the following lemma and corollary.

\begin{lemma} \label{lift-basis}
Let $I$ be a homogeneous ideal of $K(X)[Y_1,\dotsc,Y_\numfuns]$.
Let $\tilde I = I \cap K[X][Y_1,\dotsc,Y_\numfuns]$.
For $\xi \in K$, let $I_\xi \subset K[Y_1,\dotsc,Y_\numfuns]$ denote the specialisation of $\tilde I$ under $X \mapsto \xi$.
Let $\gamma \in \ZZ_{\geq0}$.

Let $\cB$ be a subset of $K[Y_1,\dotsc,Y_\numfuns]_\gamma$ whose image in $K[Y_1,\dotsc,Y_\numfuns]/I_\xi$ forms a $K$-basis for $K[Y_1,\dotsc,Y_\numfuns]_\gamma/I_{\xi,\gamma}$.
Then the image of~$\cB$ in $K(X)[Y_1,\dotsc,Y_\numfuns]/I$ forms a $K(X)$-basis for $K(X)[Y_1,\dotsc,Y_\numfuns]_\gamma/I_\gamma$.
\end{lemma}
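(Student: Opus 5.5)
We must show two things about the image of $\cB$ in $K(X)[Y]_\gamma/I_\gamma$: it is $K(X)$-linearly independent, and it spans. The plan is to work with the localisation $R = K[X]_{(X-\xi)}$, a discrete valuation ring with residue field $K$ and fraction field $K(X)$. Set $I_R = I \cap R[Y_1,\dotsc,Y_\numfuns]$ and $N = R[Y]_\gamma / I_{R,\gamma}$. This $N$ is a finitely generated $R$-module, and it is torsion-free. Indeed, if $f \in R[Y]_\gamma$ and $(X-\xi)f \in I$, then $f \in I$, because $I$ is an ideal over $K(X)$ where $X-\xi$ is invertible. So $N$ is free over $R$, of rank $\dim_{K(X)} K(X)[Y]_\gamma/I_\gamma$, since $N \otimes_R K(X) = K(X)[Y]_\gamma / I_\gamma$.

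Next we identify $N/(X-\xi)N$ with $K[Y]_\gamma/I_{\xi,\gamma}$. Reduction modulo $X-\xi$ gives a surjection $R[Y]_\gamma \to K[Y]_\gamma$, and we need the image of $I_{R,\gamma}$ under it to equal $I_{\xi,\gamma}$. Every element of $I_{R}$ has the form $g/u$ with $g \in \tilde I$ and $u \in K[X]$, $u(\xi) \neq 0$; so its reduction is $g(\xi)/u(\xi) \in I_\xi$. Conversely, elements of $\tilde I$ lie in $I_R$. Homogeneity in $Y$ is preserved throughout, so the degree-$\gamma$ parts match; here we use that $\tilde I$ is a homogeneous ideal because $I$ is. Hence $N/(X-\xi)N \cong K[Y]_\gamma/I_{\xi,\gamma}$.

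Now apply Nakayama's lemma, as in the proof of \cref{linear-relations-redundant-variables}. The elements of $\cB$, viewed in $N$, reduce to a $K$-basis of $N/(X-\xi)N$, so they generate $N$. Since $N$ is free and $\cB$ has cardinality equal to $\dim_K N/(X-\xi)N = \rk N$, these generators form an $R$-basis. Tensoring with $K(X)$ then shows that the image of $\cB$ is a $K(X)$-basis of $K(X)[Y]_\gamma/I_\gamma$.

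The only delicate step is the identification of the specialisation of $I_R$ with $I_\xi$, that is, checking that the localisation does not enlarge the specialised ideal. This holds because clearing denominators coprime to $X-\xi$ only changes elements by units at $\xi$.
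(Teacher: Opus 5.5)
Your argument is correct, and it differs from the paper's mainly in which module it tracks. You localise at $X-\xi$ and work with the quotient $N = R[Y]_\gamma/I_{R,\gamma}$. Torsion-freeness, which comes from $X-\xi$ being invertible in $K(X)$, makes $N$ free. Its two fibres are $K[Y]_\gamma/I_{\xi,\gamma}$ and $K(X)[Y]_\gamma/I_\gamma$, and Nakayama plus a rank count then turns $\cB$ directly into an $R$-basis.

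The paper instead stays over the principal ideal domain $K[X]$ and works with the submodule $\tilde I_\gamma$ rather than the quotient. It uses the same saturation fact, in the form $(X-\xi)\tilde I_\gamma = \tilde I_\gamma \cap (X-\xi)K[X][Y]_\gamma$, to get $\tilde I_\gamma/(X-\xi)\tilde I_\gamma \cong I_{\xi,\gamma}$. It chooses a $K[X]$-basis $\tilde\cB'$ of $\tilde I_\gamma$, whose specialisation is then a $K$-basis of $I_{\xi,\gamma}$. A rank count then shows that $\cB \sqcup \tilde\cB'$ is a $K(X)$-basis of $K(X)[Y]_\gamma$ in which $\tilde\cB'$ spans $I_\gamma$.

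Your version is the more conceptual one: it is exactly the statement that the quotient is free over the local ring at $\xi$ with the expected fibres. Its only extra cost is checking that specialising $I \cap R[Y]$ gives nothing beyond $I_\xi$, which you do correctly by clearing denominators that are units at $\xi$. The paper's version avoids localisation and Nakayama altogether, using only linear algebra over $K[X]$.
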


\begin{proof}
Write $R = K(X)[Y_1,\dotsc,Y_\numfuns]$, $\tilde R = K[X][Y_1,\dotsc,Y_\numfuns]$ and $R_\xi = K[Y_1,\dotsc,Y_\numfuns]$.

Since $X-\xi$ is invertible in~$R$ and $\tilde I_\gamma = I_\gamma \cap \tilde R$, we have
\[  (X-\xi)\tilde I_\gamma = \tilde I_\gamma \cap (X-\xi)\tilde R = \ker(\spmap_\xi \colon \tilde I_\gamma \to I_{\xi,\gamma}). \]
Hence the specialisation map $X \mapsto \xi$ induces an isomorphism of $K[X]$-modules
\begin{equation} \label{eqn:sp-isom}
\tilde I_\gamma \otimes_{K[X]} (K[X]/ (X-\xi)) \cong \tilde I_\gamma/(X-\xi)\tilde I_\gamma \cong I_{\xi,\gamma}.
\end{equation}

Since $K[X]$ is a principal ideal domain, $\tilde I_\gamma$ is a free $K[X]$-module.
Choose a $K[X]$-basis $\tilde\cB'$ for $\tilde I_\gamma$.
Thanks to~\eqref{eqn:sp-isom}, the image of $\tilde\cB'$ in $I_{\xi,\gamma}$ forms a $K$-basis for $I_{\xi,\gamma}$.
It follows that the image of $\cB \sqcup \tilde\cB'$ in $R_{\xi,\gamma}$ forms a $K$-basis for $R_{\xi,\gamma}$.

Consider the $K[X]$-submodule $M$ of $\tilde R_\gamma$ spanned by~$\cB \sqcup \tilde\cB'$.
The map $M \to R_{\xi,\gamma}$ 
(given by $X \mapsto \xi$) maps $\cB \sqcup \cB'$ to itself, hence is surjective.
Therefore, 
\[ \dim_K(R_{\xi,\gamma}) \leq \rk_{K[X]}(M) \leq \#\cB + \#\tilde\cB' = \dim_K(R_{\xi,\gamma}). \]
Therefore, these inequalities must be equalities, and~$\cB \sqcup \tilde\cB'$ is a $K[X]$-basis of~$M$.

Consequently, $\cB \sqcup \tilde\cB'$ is a $K(X)$-basis for $M \otimes_{K[X]} K(X) \subset R_\gamma$.
But $\dim_K(R_{\xi,\gamma}) = \dim_{K(X)}(R_\gamma)$ (since both are spanned by the same set of monomials).
Hence, $\cB \sqcup \tilde\cB'$ is a $K(X)$-basis for $R_\gamma$.
Also, $\tilde\cB'$ is a $K(X)$-basis for $\tilde I_\gamma \otimes_{K[X]} K(X) = I_\gamma$.
Therefore, the image of $\cB$ in $R_\gamma/I_\gamma$ is a $K(X)$-basis for $R_\gamma$.
\end{proof}

\begin{corollary} \label{hilbert-function-specialisation}
In the setting of \cref{lift-basis},
$HF_I(\gamma) = HF_{I_\xi}(\gamma)$.
\end{corollary}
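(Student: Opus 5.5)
The corollary follows from \cref{lift-basis} by a dimension count: that lemma supplies a $K(X)$-basis of $R_\gamma/I_\gamma$, and its cardinality is the Hilbert function on both sides. Write $R = K(X)[Y_1,\dotsc,Y_\numfuns]$ and $R_\xi = K[Y_1,\dotsc,Y_\numfuns]$, as in the proof of \cref{lift-basis}.

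The steps are as follows.

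\begin{enumerate}
\item Choose a finite subset $\cB \subset R_{\xi,\gamma}$ whose image in $R_{\xi,\gamma}/I_{\xi,\gamma}$ is a $K$-basis. This is possible because $R_{\xi,\gamma}$ is finite-dimensional: take any basis of the quotient and lift each element to a homogeneous polynomial of degree~$\gamma$. Then $\#\cB = HF_{I_\xi}(\gamma)$ by definition.
\item Apply \cref{lift-basis} to this $\cB$. It says that the image of $\cB$ in $R_\gamma/I_\gamma$ is a $K(X)$-basis. This needs the images of distinct elements of $\cB$ to be distinct and linearly independent, which is exactly what ``forms a basis'' asserts.
\item Conclude that $\dim_{K(X)}(R_\gamma/I_\gamma) = \#\cB$, which is $HF_I(\gamma)$ by definition. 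Hence $HF_I(\gamma) = HF_{I_\xi}(\gamma)$.
\end{enumerate}

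There is no real obstacle. The only point needing care is that $I_\xi$ is a homogeneous ideal, so that $HF_{I_\xi}$ is defined and $I_{\xi,\gamma}$ is its degree-$\gamma$ part. This holds because $I$ is homogeneous in the $Y$-variables. Therefore $\tilde I = I \cap K[X][Y_1,\dotsc,Y_\numfuns]$ is homogeneous in the $Y$-variables, and specialising $X \mapsto \xi$ preserves $Y$-degrees. It follows that $I_{\xi,\gamma}$ is the specialisation of $\tilde I_\gamma$, which is the identification already used in \cref{lift-basis}.
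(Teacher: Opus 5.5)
Your proof is correct and is exactly the intended argument: the paper gives no separate proof, treating the corollary as immediate from \cref{lift-basis} by lifting a basis of $R_{\xi,\gamma}/I_{\xi,\gamma}$ to a set $\cB$ and comparing cardinalities. Your added check that $I_\xi$ is homogeneous, with $I_{\xi,\gamma}$ equal to the specialisation of $\tilde I_\gamma$, is correct, and the paper uses this fact implicitly.
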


This corollary could be restated as: if $\tilde V$ denotes the closed subscheme of $\AAA^1 \times \PP^{\numfuns-1}$ cut out by $I \cap K[X][Y_1,\dotsc,Y_\numfuns]$, where $I$ is a homogeneous ideal of $K(X)[Y_1,\dotsc,Y_\numfuns]$, then $\tilde V \to \AAA^1$ is very flat.

\begin{proof}[Proof of \cref{andre-bombieri-HF}]
Let $I_\xi \subset \Qbar[Y_1,\dotsc,Y_\numfuns]$ be the homogeneous ideal generated by the relations between $y_1,\dotsc,y_\numfuns$ which are trivial at~$\xi$.
In other words, $I_\xi$ is the specialisation of~$I \cap \Qbar[X][Y_1,\dotsc,Y_\numfuns]$ under $X \mapsto \xi$.

Using \cref{hilbert-function-specialisation}, set
\[ \nummonomials = HF_I(\delta+\gamma) = HF_{I_\xi}(\delta+\gamma), \qquad \kappa = HF_I(\gamma) = HF_{I_\xi}(\gamma). \]
Choose monomials:
\begin{enumerate}
\item $m_1,\dotsc,m_\nummonomials$ in $Y_1,\dotsc,Y_\numfuns$ of degree $\delta+\gamma$ whose images in $\Qbar[Y_1,\dotsc,Y_\numfuns]/I_\xi$ form a $\Qbar(X)$-basis for $\Qbar[Y_1,\dotsc,Y_\numfuns]_{\delta+\gamma}/I_{\xi,\delta+\gamma}$;
\item $m'_1,\dotsc,m'_\kappa$ in $Y_1,\dotsc,Y_\numfuns$ of degree $\gamma$ whose images in $\Qbar[Y_1,\dotsc,Y_\numfuns]/I_\xi$ form a $\Qbar$-basis for $\Qbar[Y_1,\dotsc,Y_\numfuns]_{\gamma}/I_{\xi,\gamma}$.
\end{enumerate}

By \cref{lift-basis}, the images of $m_1, \dotsc, m_\nummonomials$ in $\Qbar(X)[Y_1,\dotsc,Y_\numfuns]_{\delta+\gamma}/I_{\delta+\gamma}$ form a $\Qbar(X)$-basis.
It  follows that the $\Qbar(X)$-linear span of $\{ m_i(y_1,\dotsc,y_\numfuns) : 1 \leq i \leq \nummonomials \}$ is equal to the $\Qbar(X)$-linear span of all monomials of degree~$\delta+\gamma$ in $y_1,\dotsc,y_\numfuns$, hence is closed under $d/dX$.

Let $\relation$ be a super-strongly non-trivial $(r_v)$-global relation of degree~$\delta$ between $y_1,\dotsc,y_\numfuns$ at~$\xi$.
For each $i=1,\dotsc,\kappa$, we have $m_i'\relation \in \Qbar[Y_1,\dotsc,Y_\numfuns]_{\delta+\gamma}$.
Hence there exists some $\relation_i'$ in the $\Qbar$-span of $m_1,\dotsc,m_\nummonomials$ such that $\relation_i' - m_i'\relation_i \in I_{\xi,\delta+\gamma}$.
Then $\relation_1', \dotsc, \relation_\kappa'$ are $(r_v)$-global relations of degree~$\delta+\gamma$ between $y_1,\dotsc,y_\numfuns$ at~$\xi$.


Since $\relation$ is super-strongly non-trivial and $m_1',\dotsc,m_\kappa'$ are $\Qbar$-linearly independent mod~$I_\xi$, it follows that $\relation_1',\dotsc,\relation_\kappa'$ are $\Qbar$-linearly independent mod~$I_\xi$.
Therefore, we may apply \cref{andre-bombieri} to the relations $\relation_1', \dotsc, \relation_\kappa'$ and monomials $m_1,\dotsc,m_\nummonomials$.
\end{proof}

\subsection{Bounds involving certain binomial coefficients}

When applying \cref{andre-bombieri-HF}, one wants to choose $\gamma$ so that
$HF_I(\delta+\gamma)^3/HF_I(\gamma)^2$ is as small as possible.
When using a bound for $HF_I$ of the form~\eqref{eqn:HF-general-estimate}, this means we want to minimise
\[ \binom{\delta+\gamma+\eta}{\eta}^3 \Big/ \binom{\gamma+\eta}{\eta}^2. \]
When $\delta$ is large compared with $\eta$, the optimal choice is close to $\gamma = 2\delta - \lfloor \frac{1}{2}\eta \rfloor$.
The following lemma bounds the necessary binomial coefficients for this value of~$\gamma$.
This lemma is the source of the constants $\refC{hxi-strong-mu-kappa}(\eta)$ and~$\refC{hxi-strong-mu-mult}(\eta)$ defined in \cref{andre-bombieri-HF-bounds}.

\begin{lemma} \label{binom-delta-gamma-bounds}
Let $\eta,\delta \in \ZZ_{\geq1}$ with $\delta \geq \frac{1}{4}(\eta+2)$.
Set $\gamma = 2\delta - \lceil \frac{1}{2}\eta \rceil$ and
\[ \alpha(\eta,\delta) = \binom{\delta+\gamma+\eta}{\eta}, \quad \beta(\eta,\delta) = \binom{\gamma+\eta}{\eta}. \] 
Then:
\begin{enumerate}[(a)]
\item $\alpha(\eta,\delta) \leq \refC{hxi-strong-mu-mult}(\eta) \delta^{\eta}$;
\vskip 0.4em
\item $\alpha(\eta,\delta) / \beta(\eta,\delta) \leq \refC{hxi-strong-mu-kappa}(\eta)$.
\end{enumerate}
\end{lemma}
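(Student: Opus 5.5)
The plan is to write both binomial coefficients as explicit products and treat them separately. Put $c = \lceil \tfrac12\eta \rceil$, so that $\gamma+\eta = 2\delta - c + \eta$ and $\delta+\gamma+\eta = 3\delta - c + \eta$. Then
\[ \alpha(\eta,\delta) = \frac{1}{\eta!}\prod_{j=1}^{\eta}(3\delta - c + j), \qquad \frac{\alpha(\eta,\delta)}{\beta(\eta,\delta)} = \prod_{j=1}^{\eta} \frac{3\delta-c+j}{2\delta-c+j}. \]
The hypothesis $\delta \geq \tfrac14(\eta+2)$ is exactly what makes $\gamma \geq 0$ (indeed $\gamma \geq 1$ in the odd case), so these expressions make sense.

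For (a), write each factor as $3\delta + t$, where the offset $t = j - c$ runs over an interval of integers.

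If $\eta = 2e-1$ is odd, then $c = e$ and the offsets are $-(e-1),\dots,e-1$, which are symmetric about~$0$. Pairing $t$ with $-t$ gives $(3\delta+t)(3\delta-t) \leq 9\delta^2$. Hence $\alpha \leq 3^\eta\delta^\eta/\eta!$, which is even better than the stated $\tfrac53 \cdot 3^\eta/\eta!$.

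If $\eta = 2e$ is even, then $c = e$ and the offsets are $-(e-1),\dots,e$. After pairing, one factor $3\delta + e$ is left over, and the bound becomes
\[ \alpha \leq \frac{(3\delta)^{\eta-1}(3\delta+e)}{\eta!}\prod_{k=1}^{e-1}\Bigl(1-\frac{k^2}{9\delta^2}\Bigr). \]
One would then try to show that the loss $\prod_k(1-k^2/9\delta^2)$ absorbs the excess $1 + e/3\delta$, using $\delta \geq (\eta+2)/4$. This does not work for small $\eta$. For $\eta = 2$ one gets $\alpha = (3\delta+1)(3\delta)/2$, which already exceeds $9\delta^2/2$. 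So I would recheck the intended parity assignment of the constant $\refC{hxi-strong-mu-mult}$. Either the factor $\tfrac53$ belongs to the even case, or the convention is $\gamma = 2\delta - \lfloor \eta/2 \rfloor$ as in the sentence preceding the lemma, which makes the offsets symmetric for even~$\eta$ instead. In whichever case has the unpaired factor, I would bound it by $(3\delta+e)/3\delta \leq 1 + \tfrac{2e}{3(2e+1)} < \tfrac53$, using $\delta \geq (\eta+1)/4$.

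For (b), the formula for $\refC{hxi-strong-mu-kappa}$ is exactly $\alpha/\beta$ evaluated at the minimal admissible $\delta$. For example, with $\eta = 4k-1$ and $\delta = k+1$ we get $\gamma = 2$, $\alpha = \binom{5k+2}{4k-1}$ and $\beta = \binom{4k+1}{2}$. So the plan is to prove that $r(\delta) := \alpha(\eta,\delta)/\beta(\eta,\delta)$ is non-increasing in $\delta$ for $\delta \geq \lceil (\eta+2)/4 \rceil$. The evaluation at the minimal $\delta$ then gives the formulas for $\eta \equiv \pm1 \bmod 4$ directly.

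To establish monotonicity, compute $r(\delta+1)/r(\delta)$ by telescoping. Passing from $\delta$ to $\delta+1$ shifts the top products by $3$ and the bottom products by $2$, which gives
\[ \frac{r(\delta+1)}{r(\delta)} = \prod_{i=1}^{3}\frac{N+i}{N+i-\eta}\cdot\prod_{i=1}^{2}\frac{M+i-\eta}{M+i}, \qquad N = 3\delta-c+\eta,\ M = 2\delta - c+\eta. \]
The claim $r(\delta+1) \leq r(\delta)$ then becomes a polynomial inequality in $\delta$, for each fixed $\eta$, that should follow from $\delta \geq (\eta+2)/4$. For even $\eta$, I would compare with $\eta - 1$: the extra factor $(3\delta-c+\eta)/(2\delta-c+\eta)$ is at most $\tfrac32$ once the shift in $c$ is accounted for, which yields $\tfrac32\refC{hxi-strong-mu-kappa}(\eta-1)$.

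I expect the main obstacle to be this monotonicity inequality, in particular near the boundary value of $\delta$, where the individual factors are not monotone in $\delta$. If the direct polynomial comparison is messy, I would instead first treat the limit $\delta \to \infty$, where $r \to (3/2)^\eta$, and then handle the finitely many small-$\delta$ configurations by explicit estimation.
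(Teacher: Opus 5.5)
Part (a) is essentially the paper's argument. For odd $\eta$ the paper uses symmetric pairing/AM--GM around $3\delta$. For even $\eta$ there is one leftover factor $1+\ell/3\delta<\tfrac53$, which the paper gets from $\alpha(\eta,\delta)=\frac{3\delta+\ell}{\eta}\,\alpha(\eta-1,\delta)$ with $\ell=\lfloor\eta/2\rfloor$. Your diagnosis of the constant is also correct. The proof gives $3^\eta/\eta!$ for odd $\eta$ and $\tfrac53\cdot 3^\eta/\eta!$ for even $\eta$, so the parities in the definition of $\refC{hxi-strong-mu-mult}$ are swapped; the paper's later use of $\refC{hxi-strong-mu-mult}(3)=\tfrac92$ agrees with the proof. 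Your alternative explanation via $\lfloor\eta/2\rfloor$ does not work: floor and ceiling coincide for even $\eta$, and no integer shift of $\gamma$ makes the offsets symmetric when $\eta$ is even. A small slip: $\delta\ge(2e+1)/4$ gives $e/3\delta\le 4e/(3(2e+1))$, not $2e/(3(2e+1))$, though the conclusion $<\tfrac53$ still holds.

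The genuine gap is in (b). For odd $\eta$, everything rests on $r(\delta)=\alpha/\beta$ being non-increasing, and you do not prove this. The telescoped ratio $r(\delta+1)/r(\delta)$ only turns it into a degree-five polynomial inequality that you say ``should follow''. The fallback (the limit $(3/2)^\eta$ plus finitely many small $\delta$) is not a proof either: a limit gives no bound at finite $\delta$ without a quantitative estimate, the relevant maximum sits at the boundary $\delta=k+1$, and $\eta$ ranges over infinitely many values. The missing idea is to apply the pairing you used in (a) to the ratio itself. For $\eta=2\ell+1$,
\[ \frac{\alpha(\eta,\delta)}{\beta(\eta,\delta)}=\prod_{i=-\ell}^{\ell}\frac{3\delta+i}{2\delta+i}=\frac32\prod_{j=1}^{\ell}\frac{9\delta^2-j^2}{4\delta^2-j^2}=\frac32\prod_{j=1}^{\ell}\Bigl(\frac94+\frac54\cdot\frac{j^2}{4\delta^2-j^2}\Bigr), \]
and each factor decreases in $\delta$ because $2\delta>\ell\ge j$. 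The individual factors $\frac{3\delta+i}{2\delta+i}$ are indeed not monotone (they increase for $i>0$), which is the obstacle you noticed, but the paired ones are. Evaluating at $\delta=k+1$ then gives $\refC{hxi-strong-mu-kappa}(\eta)$.

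Do not attempt monotonicity for all $\eta$. For even $\eta$ the ratio increases; for $\eta=2$ it is $\frac{3(3\delta+1)}{2(2\delta+1)}$, rising to $\frac94$. So your reduction $\frac{\alpha(\eta,\delta)}{\beta(\eta,\delta)}=\frac{3\delta+\ell}{2\delta+\ell}\cdot\frac{\alpha(\eta-1,\delta)}{\beta(\eta-1,\delta)}\le\frac32\cdot\frac{\alpha(\eta-1,\delta)}{\beta(\eta-1,\delta)}$, which is exactly the paper's, is essential rather than optional. Here $\gamma$ is the same for $\eta$ and $\eta-1$, so there is no shift in $c$ to account for.
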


\begin{proof}
Note that $\delta \geq \frac{1}{4}(\eta+2)$ is equivalent to $\gamma \geq 1$.
Write $\ell = \lfloor \tfrac{1}{2}\eta \rfloor$.
Then
\[ \alpha(\eta,\delta) = \binom{3\delta + \ell}{\eta}, \quad \beta(\eta,\delta) = \binom{2\delta + \ell}{\eta}. \]

\subsubsection*{Proof of (a) when $\eta$ is odd}
If $\eta$ is odd, then $\eta=2\ell+1$ and
\begin{align*}
    \alpha(\eta,\delta)
  & = \frac{1}{\eta!}(3\delta+\ell)(3\delta+\ell-1) \dotsm (3\delta-\ell)
    \leq \frac{1}{\eta!}(3\delta)^\eta
\end{align*}
by the AM-GM inequality.

\subsubsection*{Proof of (a) when $\eta$ is even}
When $\eta$ is even, replacing $\eta$ by $\eta-1$ would reduce $\ell$ by~$1$, so
\begin{equation} \label{eqn:alpha-odd-reduce-n}
    \alpha(\eta,\delta) = \frac{3\delta+\ell}{\eta} \binom{3\delta+\ell-1}{\eta-1} = \frac{3\delta+\ell}{\eta} \alpha(\eta-1,\delta).
\end{equation}
Using (a) for $\eta-1$, which is odd, it follows that
\[ \alpha(\eta,\delta) \leq \frac{3\delta+\ell}{\eta} \cdot \frac{3^{\eta-1}\delta^{\eta-1}}{(\eta-1)!}
   = \Bigl( 1+\frac{\ell}{3\delta} \Bigr) \frac{3^\eta\delta^\eta}{\eta!}. \]
Since $\ell < 2\delta$, we have $1+\ell/3\delta < 5/3$, so this proves (a) when $\eta$ is even.

\subsubsection*{Proof of (b) when $\eta$ is odd}
If $\eta$ is odd, then
\begin{align*}
    \frac{\alpha(\eta,\delta)}{\beta(\eta,\delta)}
   = \prod_{i=-\ell}^{\ell} \frac{3\delta+i}{2\delta+i}
   = \prod_{i=-\ell}^{\ell} \Bigl( \frac{3}{2} - \frac{1}{2} \cdot \frac{i}{2\delta+i} \Bigr)
    = \frac{3}{2} \prod_{j=1}^{\ell} \Bigl( \frac{9}{4} + \frac{5}{4} \cdot \frac{j^2}{4\delta^2-j^2} \Bigr)
\end{align*}
(for the last equality, split out the $i=0$ factor, and combine the other factors in pairs $i=\pm j$).
For fixed $j$, as $\delta$ increases, the factor $\frac{9}{4} + \frac{5}{4} \cdot \frac{j^2}{4\delta^2-j^2}$ decreases (bearing in mind that $2\delta > \ell \geq j$).
Hence, for fixed odd~$\eta$, $\alpha(\eta,\delta)/\beta(\eta,\delta)$  is a decreasing function of~$\delta$.

Write $\eta=4k-1$ or $4k+1$ with $k \in \ZZ$.
In either of these cases, the condition that $\delta \geq \frac{1}{4}(\eta+2)$ is equivalent to $\delta \geq k+1$.
Hence, and comparing with the definition of $\refC{hxi-strong-mu-kappa}(\eta)$, we obtain that
\[ \frac{\alpha(\eta,\delta)}{\beta(\eta,\delta)} \leq \frac{\alpha(\eta,k+1)}{\beta(\eta,k+1)} = \refC{hxi-strong-mu-kappa}(\eta). \]

\subsubsection*{Proof of (b) when $\eta$ is even}
If $\eta$ is even and fixed, then $\alpha(\eta,\delta)/\beta(\eta,\delta)$ is an increasing function of~$\delta$, so the same argument as in the odd~$\eta$ case does not work.

Instead, we use \eqref{eqn:alpha-odd-reduce-n}, together with the similar
\begin{align*}
    \beta(\eta,\delta)  & = \frac{2\delta+\ell}{\eta} \binom{2\delta+\ell-1}{\eta-1} = \frac{2\delta+\ell}{\eta} \beta(\eta-1,\delta). 
\end{align*}
Hence,
\[ \frac{\alpha(\eta,\delta)}{\beta(\eta,\delta)} = \frac{3\delta+\ell}{2\delta+\ell} \cdot \frac{\alpha(\eta-1,\delta)}{\beta(\eta-1,\delta)} \leq \frac{3}{2} \cdot \frac{\alpha(\eta-1,\delta)}{\beta(\eta-1,\delta)}. \]
Thus, we may deduce (b) for an even value~$\eta$ from (b) for the odd value~$\eta-1$.
\end{proof}

\subsection{General bounds for Hilbert functions} \label{ssec:ab-super-strong-end}

The following lemma states an upper bound for the Hilbert functions of homogeneous prime ideals due to Chardin, and converts a lower bound due to Nesterenko into the form~\eqref{eqn:HF-general-estimate}.

\begin{lemma} \label{HF-bounds}
Let $I \subset K[Y_1,\dotsc,Y_\numfuns]$ be a homogeneous prime ideal, defining a projective variety $V \subset \PP^{\numfuns-1}$.
Let $\eta = \dim(V)$ and let $D = \deg(V)$.
Then we have
\[ HF_I(\gamma) \leq D\binom{\gamma+\eta}{\eta} \text{ for all } \gamma \geq 0 \]
and
\[ HF_I(\gamma) \geq \Bigl( 1-\frac{D}{\gamma} \Bigr)^\eta D \binom{\gamma+\eta}{\eta} \text{ for all } \gamma > D. \]
\end{lemma}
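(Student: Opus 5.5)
The plan is to treat the two inequalities separately. Both come from the literature, and the only work is to put the lower bound into the shape of \eqref{eqn:HF-general-estimate}. I would first reduce to the case $K$ algebraically closed. Extending scalars does not change the Hilbert function, since $\dim_K$ of each graded piece is preserved under base change. If $I$ is geometrically prime the dimension and degree are unchanged. Otherwise, passing to $\ov K$ splits $V$ into conjugate components of the same dimension, with degrees summing to $D$. I would therefore check that the cited bounds hold for equidimensional (unmixed radical) ideals, or else simply restrict to the geometrically prime case, which is what we need since our ideals live over $\Qbar(X)$ after specialisation.

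For the upper bound I would invoke Chardin's theorem on Hilbert functions of unmixed homogeneous ideals. It states that if $I$ is a homogeneous prime (more generally, unmixed) ideal defining a projective variety of dimension $\eta$ and degree $D$, then $HF_I(\gamma) \leq D\binom{\gamma+\eta}{\eta}$ for all $\gamma \geq 0$. This gives the first inequality directly, and no conversion is needed.

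For the lower bound I would quote Nesterenko's estimate for homogeneous prime ideals in the form
\[ HF_I(\gamma) \geq D\binom{\gamma-D+\eta}{\eta} \quad \text{for } \gamma \geq D, \]
or whichever shifted binomial his statement actually provides; I expect a shift of size about $D$ in the top entry. The conversion is then the elementary computation
\[ \binom{\gamma-D+\eta}{\eta} \Big/ \binom{\gamma+\eta}{\eta} = \prod_{i=1}^{\eta} \frac{\gamma+i-D}{\gamma+i} = \prod_{i=1}^{\eta}\Bigl(1-\frac{D}{\gamma+i}\Bigr) \geq \Bigl(1-\frac{D}{\gamma}\Bigr)^{\eta}. \]
The last step uses that each factor is increasing in $i$ and positive when $\gamma > D$. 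Multiplying through by $D\binom{\gamma+\eta}{\eta}$ gives the stated lower bound. If Nesterenko's shift is slightly different, for instance $\gamma - D + 1$, the same product argument works factor by factor, possibly needing only $\gamma > D$ to keep all factors positive.

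The main obstacle is bibliographic rather than mathematical: pinning down the exact form and hypotheses of Nesterenko's lower bound. One needs to confirm whether it is stated for prime ideals over an arbitrary field, for the projective dimension $\eta$ rather than the Krull dimension $\eta+1$, and with which shift in the binomial coefficient. The conversion must then be matched to it. I would first check that the normalisation of dimension agrees, since an off-by-one error there would change the exponent $\eta$.
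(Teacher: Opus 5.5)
Your proposal is correct and follows the paper's proof. The upper bound is quoted from Chardin. The lower bound starts from Nesterenko's estimate, which the paper states as $\binom{\gamma+\eta+1}{\eta+1}-\binom{\gamma+\eta+1-D}{\eta+1}$ and immediately bounds below by $D\binom{\gamma+\eta-D}{\eta}$, which is exactly the form you guessed. It then does the same factor-by-factor comparison with $(1-D/\gamma)^\eta$ that you describe.

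The paper cites both results directly for prime ideals over $K$ and does not base change to $\ov K$. Your fallback of restricting to geometrically prime ideals would not suffice: the lemma is later applied over $\Qbar(X)$ to ideals of functional relations, and these need not be geometrically prime.
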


\begin{proof}
The upper bound is from \cite{Cha89}.

For the lower bound, we start from \cite[\S 6, Prop.~1]{Nes84}:
\begin{align*}
    HF_I(\gamma)
   \geq \binom{\gamma+\eta+1}{\eta+1} - \binom{\eta - D + \gamma+1}{\eta+1}
   \geq D \binom{\gamma+\eta-D}{\eta}.
\end{align*}
Using that $\gamma > D$, the lower bound stated in the lemma follows because
\begin{align*}
\binom{\gamma+\eta-D}{\eta}
   = \binom{\gamma+\eta}{\eta} \cdot \prod_{j=1}^\eta \frac{\gamma+j-D}{\gamma+j}
    \geq \binom{\gamma+\eta}{\eta} \cdot \biggl( 1-\frac{D}{\gamma} \biggr)^\eta.
\\[\dimexpr-\baselineskip+\dp\strutbox]
&\qedhere
\end{align*}
\end{proof}

\begin{corollary} \label{HF-bounds-2}
If $\gamma \geq 2D$, then
\[ 2^{-\eta} D \binom{\gamma+\eta}{\eta} \leq HF_I(\gamma) \leq D \binom{\gamma+\eta}{\eta}. \]
\end{corollary}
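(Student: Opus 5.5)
This follows directly from \cref{HF-bounds} by checking that, once $\gamma \geq 2D$, each of its two inequalities is at least as strong as the one claimed.

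The upper bound $HF_I(\gamma) \leq D\binom{\gamma+\eta}{\eta}$ holds for all $\gamma \geq 0$ by \cref{HF-bounds}, so it holds for $\gamma \geq 2D$.

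For the lower bound, the plan is to apply the second inequality of \cref{HF-bounds}. That inequality needs $\gamma > D$. Since $D = \deg(V) \geq 1$, the assumption $\gamma \geq 2D$ gives $\gamma \geq 2D > D$, so it applies. The next step is to bound its prefactor. From $\gamma \geq 2D$ we get $D/\gamma \leq \tfrac12$, hence $1 - D/\gamma \geq \tfrac12$. Since $1 - D/\gamma$ is positive and $\eta \geq 0$, raising to the power $\eta$ preserves the inequality, giving $(1-D/\gamma)^\eta \geq 2^{-\eta}$. Substituting into \cref{HF-bounds} yields $HF_I(\gamma) \geq 2^{-\eta} D \binom{\gamma+\eta}{\eta}$.

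No step here is a real obstacle. The only things to keep track of are that $D \geq 1$, so the strict inequality $\gamma > D$ really does follow from $\gamma \geq 2D$, and that the hypotheses of \cref{HF-bounds} (a homogeneous prime ideal $I$) are carried over unchanged.
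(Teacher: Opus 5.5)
Your proposal is correct and follows the paper's proof: the upper bound is quoted directly from \cref{HF-bounds}, and for the lower bound $\gamma \geq 2D$ gives $1 - D/\gamma \geq \tfrac12$, hence $(1-D/\gamma)^\eta \geq 2^{-\eta}$. Your check that $D \geq 1$, so that $\gamma \geq 2D$ implies the strict inequality $\gamma > D$ needed by \cref{HF-bounds}, is a detail the paper leaves implicit.
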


\begin{proof}
Since $\gamma \geq 2D$, we have $\tfrac{1}{2} \leq 1-D/\gamma$.
Hence this follows from \cref{HF-bounds}.
\end{proof}

\section{Application to unlikely intersections with lines in torus} \label{sec:torus-application}

In this section, we prove \cref{lines-in-torus-height-bound-intro}, that is, an explicit height bound for unlikely intersections in tori (under the restrictions discussed in section~\ref{torus-bound-restrictions}).
In order to state the bound, for number fields~$L$, we write
\[ \tau(L) = \# \{ \text{complex places of~$L$} \} = \tfrac{1}{2} \bigl( [L:\QQ] - \# \Hom(L,\RR) \bigr). \]

\begin{theorem} \label{lines-in-torus-height-bound}
Let $n \in \ZZ_{\geq2}$.
Let $G = \GG_m^n$.
Let $C \subset G$ be a closed algebraic curve such that the Zariski closure~$\ov C$ of~$C$ in~$\AAA^n$ is a line defined over a number field~$K$.
Suppose that $C$ contains the point $s_0 := (1,1,\dotsc,1)$, and that $C$ is not contained in a proper algebraic subgroup of~$G$.

Let $x \colon \ov C \to \AAA^1$ be an isomorphism of algebraic curves satisfying $x(s_0)=1$.
Let
\[ \Xi = \{ x(s) : s \in \ov C(\ov K) \setminus C(\ov K) \}. \]
Let $H$ be an upper bound for $H(\Xi)$.

Then, for every point~$s \in C(\ov K) \cap \Sigma_2$, letting $\delta = \max\{1,\tau(K(s))\}$, we have:
\begin{enumerate}[(i)]
\item for all $\delta \in \ZZ_{\geq1}$, letting $\nummonomials = \binom{n+\delta}{\delta}$,
\begin{equation*}
h(x(s)) < n \bigl( 8\nummonomials^3 - 3.8\nummonomials^2 \bigr) \bigl( \log(\delta) + 2 \bigr) (H+1);
\end{equation*}

\item if $\delta \geq \tfrac{1}{4}(n+2)$, then
\begin{align*}
    h(x(s)) < {}
  & 8n \refC{hxi-strong-mu-kappa}(n)^2 \refC{hxi-strong-mu-mult}(n)  \delta^n \big( \log(3\delta) + 2 \bigr)(H+1),
\end{align*}
where $\refC{hxi-strong-mu-kappa}(n)$ and $\refC{hxi-strong-mu-mult}(n)$ were defined in \cref{andre-bombieri-HF-bounds}.
\end{enumerate}
\end{theorem}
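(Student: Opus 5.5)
We use the G-functions $y_0=1$, $y_i=\log(1+a_iX)$ for $i=1,\dotsc,n$. Here the line is parametrised so that the $i$-th coordinate of the point of $\ov C$ with $x=\xi$ is $1+a_i(\xi-1)$. Concretely, we use the local parameter $X=x-1$, so $s_0$ corresponds to $X=0$. The points of $\ov C\setminus C$ are then $X=-1/a_i$, and $\Xi=\{1-1/a_i\}$.

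The $\Qbar(X)$-span of $1,\log(1+a_iX)$ is closed under $d/dX$. The corresponding operator $\Lambda$ has $\Sin(\Lambda)=\{-1/a_i\}$, so $s=n$ and $H(\Sin(\Lambda))$ is controlled by $H(\Xi)$; the latter requires a short comparison $h(1/(\alpha-1))\le h(\alpha)+\log 2$. The functions $1,\log(1+a_1X),\dotsc,\log(1+a_nX)$ are homogeneously algebraically independent over $\Qbar(X)$. Indeed, $C$ not lying in a proper subgroup means that the $1+a_iX$ are multiplicatively independent modulo constants, and the logarithms of such functions are algebraically independent over $\CC(X)$ by Ax's theorem or a direct monodromy argument.

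Next we estimate the inputs. The functions $\log(1+a_iX)$ have $R_v=\min_i|a_i|_v^{-1}$, and we take $r_v$ to be these radii (shrunk at archimedean places if necessary, see below). Then $\rho\le\sum_v'\log^+\max_i|a_i|_v\le H(\{a_i\})$. Moreover $\sigma_y\le\rho+\limsup\frac1n\log\lcm(1,\dotsc,n)=\rho+1$, by the prime number theorem. For $\sigma_\Lambda$ we use the explicit form of $\Gamma_{[j]}$: entries of the shape $c\,a_i^{j}/(1+a_iX)^j\cdot(j-1)!$. Dividing by $j!$ leaves denominators of size at most $j$, which gives $\sigma_\Lambda\le H+1$ or similar. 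Together these yield $\theta_\Lambda\lesssim H+1$, and everything becomes proportional to $(H+1)$.

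The main step is constructing the global relation at $\xi=X(s)$ for $s\in\Sigma_2$. Since $s$ lies in a subgroup of codimension $\ge2$, there are two independent integer vectors $b,b'$ with $\prod(1+a_i\xi)^{b_i}=1$, and likewise for $b'$. At each place $v$ with $|\xi|_v<r_v$, we have the identity $\sum b_i\log_v(1+a_i\xi)=2\pi i k_v$ (with $k_v=0$ at non-archimedean places, where $\log_v$ of a product equal to $1$ in the convergence disc is $0$; the $p$-adic case may need $r_v$ shrunk by $|p|^{1/(p-1)}$). At a complex place we eliminate $2\pi i$ by taking $k'_vL_b-k_vL_{b'}$, where $L_b=\sum b_iY_i$. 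This linear form has algebraic (integer) coefficients, and it is non-zero because $b,b'$ are independent. At real places $k_v=0$, so $L_b$ itself works. Conjugate places induce the same form up to complex conjugation, so we can take one linear factor per complex place $w$ of $K(s)$, together with $L_b$. The product of these is a homogeneous relation of degree $\max\{1,\tau(K(s))\}=\delta$ (after homogenising with $Y_0$), and it vanishes at every $(r_v)$-relevant place. By independence it is non-zero, hence non-trivial and super-strongly non-trivial.

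Finally we apply the results. Part (i) follows from \cref{andre-5.2-explicit}, or rather the finer \cref{andre-bombieri-simp}, with $\numfuns=n+1$, $\kappa=1$, $\nummonomials=\binom{n+\delta}{\delta}$ and $s=n$, giving the $8\nummonomials^3$ shape. Part (ii) follows from \cref{andre-bombieri-homog-alg-ind} with $\numfuns-1=n$. Then $h(x(s))\le h(\xi)+\log2$ is absorbed.

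I expect the main obstacle to be the bookkeeping at archimedean places. There one must ensure that the relevant $\xi$ stays in a disc where the principal branch identity holds, with $k_v$ controlled, and check that the chosen $(r_v)$ keeps $\rho$ of size $H+O(1)$. After that, the constants just need to be tracked into the stated $-3.8\nummonomials^2$ and $(\log\delta+2)$ forms.
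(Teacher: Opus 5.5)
Your overall route is the paper's: the G-functions $1,\log(1+a_iX)$, Ax--Schanuel for homogeneous independence, a pair of characters to cancel $2\pi i$ at complex places, then \cref{andre-bombieri} for (i) and \cref{andre-bombieri-homog-alg-ind} for (ii). The gap is in the degree count for your global relation, and that degree is exactly the $\delta$ the bound depends on.

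You multiply one linear factor per complex place of $K(s)$ \emph{together with} $L_b$. When all $\tau(K(s))\geq 1$ complex places are $(r_v)$-relevant, this product has degree $\tau(K(s))+1$, not $\max\{1,\tau(K(s))\}$. For example, for $K(s)$ cubic with a complex place you get degree $2$ instead of $1$, so $\nummonomials=\binom{n+2}{2}$ instead of $n+1$.

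The missing observation is the one in \cref{arch-vs-non-arch-relations} and \cref{global-relation}. Your complex-place factor $k'_wL_b-k_wL_{b'}$ equals $L_{k'_wb-k_wb'}$, which is the linear form attached to a character that still vanishes at $s$. So by \cref{relation-linear-non-arch} it is already a $\hat v$-adic relation at every relevant non-archimedean and real place. Hence $L_b$ is needed only when no complex place is $(r_v)$-relevant. Otherwise the product over the relevant complex places alone is $(r_v)$-global, of degree at most $\tau(K(s))$, and you pad with a power of $Y_0$ to reach degree exactly $\delta$.

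There is also a smaller error in the same step: your claim that $k'_wL_b-k_wL_{b'}\neq0$ ``because $b,b'$ are independent'' fails when $k_w=k'_w=0$. As in \cref{relation-linear-arch}, take any integers $(c,c')\neq(0,0)$ with $ck_w+c'k'_w=0$.

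Several further points affect the exact constants.

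\begin{itemize}
\item For (i) you need the unsimplified bound \eqref{eqn:hxi-general-bound} with $\kappa=1$. \cref{andre-bombieri-simp} only gives $8n\nummonomials^3(\log(\delta)+2)(H+1)$. The $-3.8\nummonomials^2$ comes from the factor $\nummonomials-\tfrac12$, after using $0.2n\nummonomials^2(\log(\delta)+2)(H+1)$ to absorb the second line.
\item Do not shrink $r_v$ by $\abs{p}_v^{1/(p-1)}$. The $p$-adic logarithm is a homomorphism on the whole disc $D(1,1)$, which is all that is used. Shrinking at every prime would make ${\sum_v}'\log^+(1/r_v)$ diverge, so $(r_v)$ would not be acceptable. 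The paper simply takes $r_v=\sigma_v(\Xi)$, with no archimedean adjustment and no need to control $k_w$.
\item Your shift $X=x-1$ comes from a slip in the statement. The proof writes $p_i=1+a_ix$ and $\Xi=\{-1/a_i\}$, i.e.\ $x(s_0)=0$. With your normalisation, $H(\Sin(\Lambda))$, $\rho$ and $\sigma_y-1$ are only bounded by $H+n\log 2$. This additive loss is not absorbed by the stated constants when $H$ is small.
\item Your bound $\sigma_\Lambda\le H+1$ is harmless. Still, the Gauss norms of $\Gamma_{[j]}/j!$ are at most $\abs{1/j}_v$, which gives $\sigma_\Lambda\le1$.
\end{itemize}
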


While the bound in \cref{lines-in-torus-height-bound}(i) is valid for all~$\delta$ and~$n$, it is mainly useful for $\delta < \tfrac{1}{4}(n+2)$, because \cref{lines-in-torus-height-bound}(ii) is usually better when it applies.

Throughout the proof of \cref{lines-in-torus-height-bound}, we shall let $p_1,\dotsc,p_n \colon \ov C \to \AAA^1$ denote the projections onto the coordinate axes.
For each~$i$, we have $p_i = 1 + a_ix$ in~$\cO(\ov C)$, for some $a_i \in K^\times$.
The set $\Xi$ can be described more concretely as
\begin{equation} \label{eqn:Xi-ai}
\Xi = \{ x(s) : s \in \ov C(\ov K), p_i(s) = 0 \text{ for some } i \} = \{ -1/a_i : 1 \leq  i \leq n \}.
\end{equation}
Consequently,
\[ H(\Xi) = h(a_1) + \dotsb + h(a_n). \]

The proof of \cref{lines-in-torus-height-bound} is in two parts.
In section~\ref{ssec:tori-inexplicit}, we prove the existence of a bound for $h(x(s))$ which is polynomial in~$\delta$, but without computing the constants; the bulk of the work is to construct $(r_v)$-global relations at points in $C(\ov K) \cap \Sigma_2$.
Then, in section~\ref{ssec:tori-explicit}, we compute the explicit constants, by computing the input data of \cref{andre-bombieri} for conclusion~(i) and \cref{andre-bombieri-homog-alg-ind} for conclusion~(ii).

\subsection{Proof of inexplicit height bound} \label{ssec:tori-inexplicit}

\subsubsection{G-functions}

For each $1 \leq i \leq n$, let
\[ F_i = \log(1+a_i X) \in \powerseries{K}{X}. \]
This is the Taylor series of ${\log} \circ p_i$ around~$s_0$, in terms of the local parameter~$x$.
It was shown in section~\ref{ssec:g-functions-definition} that the $F_i$ are G-functions.

We shall apply \cref{andre-thm-E} to the set of G-functions
\[ \cG = \{ 1, F_1, \dotsc, F_n \}. \]
The $(r_v)$-global relations we construct will involve only the G-functions $F_1, \dotsc, F_n$; by adjoining the G-function~$1$, we obtain a set~$\cG$ whose $\Qbar(X)$-linear span is closed under $d/dX$, because, for each~$i$,
\begin{equation} \label{eqn:F_i-derivative}
\frac{dF_i}{dX} = \frac{a_i}{1+a_iX} \in \Qbar(X) \cdot 1.
\end{equation}

\subsubsection{Functional relations}


The functional relations are controlled using the Ax--Schanuel theorem.

\begin{lemma} \label{Fi-alg-indep}
The elements of~$\cG$ are homogeneously algebraically independent over~$\Qbar(X)$.
\end{lemma}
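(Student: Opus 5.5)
The statement reduces to the algebraic independence of $F_1,\dotsc,F_n$ over $\Qbar(X)$, and I would deduce that from Ax's theorem, using the hypothesis that $C$ is not contained in a proper algebraic subgroup of~$G$.

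First I reduce to the inhomogeneous statement. A homogeneous polynomial $\tilde\relation \in \Qbar(X)[Y_0,Y_1,\dotsc,Y_n]$ vanishing at $(1,F_1,\dotsc,F_n)$ gives a polynomial $\tilde\relation(1,Y_1,\dotsc,Y_n)$ vanishing at $(F_1,\dotsc,F_n)$. This dehomogenisation is non-zero whenever $\tilde\relation$ is non-zero, since $\tilde\relation = Y_0^{\deg \tilde\relation}\,\tilde\relation(1,Y_1/Y_0,\dotsc,Y_n/Y_0)$. So it suffices to show that $F_1,\dotsc,F_n$ are algebraically independent over $\Qbar(X)$, or equivalently over $\CC(X)$, by the usual descent for algebraic independence from $\Qbar$ to $\CC$. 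In other words, I must show $\trdeg_{\CC(X)} \CC(X)(F_1,\dotsc,F_n) = n$, working in the field of formal Laurent series $\CC((X))$ with derivation $d/dX$.

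Next I apply Ax's theorem (the differential-field Schanuel statement, \cite{Ax71}-style; the paper calls this Ax--Schanuel). Suppose $y_1,\dotsc,y_n \in X\powerseries{\CC}{X}$ are $\QQ$-linearly independent modulo $\CC$. Then
\[ \trdeg_\CC \CC\bigl(y_1,\dotsc,y_n,e^{y_1},\dotsc,e^{y_n}\bigr) \geq n+1. \]
Take $y_i = F_i$, so that $e^{y_i} = 1+a_iX \in \CC(X)$. The field on the left is then $\CC(X)(F_1,\dotsc,F_n)$, whose transcendence degree over~$\CC$ is $1 + \trdeg_{\CC(X)}\CC(X)(F_1,\dotsc,F_n)$. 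The inequality therefore gives transcendence degree at least $n$ over $\CC(X)$, which is what we want.

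The main step is verifying the linear independence hypothesis, and this is where the hypothesis on $C$ enters. Suppose $\sum c_i F_i \in \CC$ with $c_i \in \QQ$ not all zero. Clearing denominators, I may take $c_i \in \ZZ$. Since each $F_i$ has zero constant term, the constant must be $0$. Exponentiating in $\powerseries{\CC}{X}$ gives $\prod_i (1+a_iX)^{c_i} = 1$, that is, $\prod_i p_i^{c_i} = 1$ on $\ov C$. Hence $C$ lies in the kernel of the non-trivial character $(t_1,\dotsc,t_n) \mapsto \prod_i t_i^{c_i}$, which is a proper algebraic subgroup of~$G$, contradicting the hypothesis. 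The only care needed is to make sure the Ax--Schanuel version being cited is stated for power series with zero constant term (or modulo constants), as above, so that the multiplicative relation obtained is exactly the character relation $\prod_i p_i^{c_i} = 1$.
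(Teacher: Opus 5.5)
Your proposal is correct and follows essentially the same route as the paper. You deduce $\QQ$-linear independence of the $F_i$ (which have zero constant term) from the hypothesis that $C$ lies in no proper algebraic subgroup. You then apply Ax's theorem to get transcendence degree at least $n+1$, hence at least $n$ over the rational functions since $\exp F_i = 1+a_iX$, and finish by dehomogenising. The only differences are cosmetic: you write out the linear-independence step that the paper states in one line, and you work over $\CC$, where only the trivial direction (independence over $\CC(X)$ implies independence over $\Qbar(X)$) is needed.
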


\begin{proof}
Since $C$ is not contained in any proper algebraic subgroup of~$G$ and $F_i(0) = \log(p_i(s_0)) = 0$ for each~$i$,  the power series $F_1,\dotsc,F_n$ are $\QQ$-linearly independent and have no constant terms.
Hence, by the Ax--Schanuel theorem \cite[Thm.~1]{Ax71},
\[ \trdeg_{\Qbar}(F_1,\dotsc,F_n,\exp F_1,\dotsc,\exp F_n) \geq n+\rk \bigl( \rd F_i/\rd X : 1 \leq i \leq n \bigr) = n+1. \]
Since $\exp F_1, \dotsc, \exp F_n \in \Qbar(X)$, it follows that
\[ \trdeg_{\Qbar(X)}(F_1,\dotsc,F_n) \geq n. \]
In other words, $F_1, \dotsc, F_n$ are algebraically independent over~$\Qbar(X)$.

Any homogeneous $\Qbar(X)$-polynomial relation between $1,F_1,\dotsc,F_n$ would give rise to a (not necessarily homogeneous) $\Qbar(X)$-relation between $F_1,\dotsc,F_n$.
Hence $1,F_1,\dotsc,F_n$ are homogeneously algebraically independent over~$\Qbar(X)$.
\end{proof}

\subsubsection{Acceptable system of radii} \label{sssec:system-of-radii}



Let $(r_v)$ be a system of radii over~$K$ such that the $K_v$-analytic spaces
\[ U_v = (x^\van)^{-1}(D(0,r_v,K_v)) \cap C^\van \]
satisfy:
\begin{enumerate}[(i)]
\item for each place~$v$ and each $i=1,\dotsc,n$, we have $U_v \subset (p_i^\van)^{-1}(D(1,1,K_v))$;
\item $(r_v)$ is acceptable for the set of G-functions~$\cG$.
\end{enumerate}
Since $p_i$ is a morphism of algebraic varieties over~$K$, there exists a system of radii satisfying (i) and~(ii) in which almost all~$r_v$ are equal to~$1$.
Hence, after potentially shrinking the $r_v$, we may choose them to also satisfy~(iii).


\subsubsection{Construction of relations}

In this subsection, we construct a global relation between the elements of~$\cG$ at~$x(s)$, whenever $s \in C(\ov K) \cap \Sigma_2$.

\begin{remark} \label{arch-vs-non-arch-relations}
The basic strategy, as in previous applications of the G-functions method involving relations at non-archimedean places (such as \cite{DO:LGO-PEL}), is to find one $\hat w$-adic relation~$P_{\hat w}$ for each archimedean place~$w$ of~$K(s)$, and one relation~$P_{na}$ which works at all non-archimedean places.
In the present setting, where the local relations are linear, multiplying these together would lead to an $(r_v)$-global relation $P_{na} \cdot \prod_{\hat w \text{ arch}} P_{\hat w}$ of degree at most $[K(s):\QQ] + 1$.

We improve on this slightly but significantly.
It happens that our relations for complex places also work at all non-archimedean and real places.
Hence, if there is at least one $(r_v)$-relevant complex place, we don't need $P_{na}$, and $\prod_{\hat w \text{ complex}} P_{\hat w}$ is already $(r_v)$-global.
On the other hand, if there are no $(r_v)$-relevant complex places, then $P_{na}$ works at the real as well as the non-archimedean places, so is itself an $(r_v)$-global relation.
Thus we get an $(r_v)$-global relation of degree at most
\[ \max \{ \tau(K(s)), 1 \} \leq \max \{ \tfrac{1}{2}[K(s):\QQ], 1 \}. \]
\end{remark}

If $s$ lies in a proper algebraic subgroup of~$G$, then there exists a non-trivial homomorphism $\phi \colon G \to \GG_m$ such that $s \in \ker(\phi)$.
We use such a homomorphism to construct a relation between values of $F_1,\dotsc,F_n$ as follows.

\begin{definition}
Let $\phi \colon G \to \GG_m$ be a homomorphism.
Write it in the form
\[ \phi(X_1,\dotsc,X_n) = X_1^{b_1} \dotsm X_n^{b_n}, \]
where $b_1,\dotsc,b_n \in \ZZ$.
Define the \defterm{linear relation attached to~$\phi$} to be
\[ P_\phi(Y_1,\dotsc,Y_n) = b_1Y_1 + \dotsb + b_nY_n \in \QQ[Y_1,\dotsc,Y_n]. \]
\end{definition}

\begin{remark} \label{linear-relations-non-zero}
In the above definition, note that if $\phi \neq 1$, then $P_\phi \neq 0$.
\end{remark}

The non-archimedean and real places are easier to deal with.
Indeed, we can get a $\hat v$-adic relation for $(r_v)$-relevant non-archimedean and real places~$\hat v$ whenever $\underline p(s)$ lies in any proper algebraic subgroup of~$G$ (even of codimension~$1$).

Let $\hat K$ be a finite extension of~$K$.
For each place~$\hat v$ of~$\hat K$, let $\log^\vhatan$ denote the $\hat K_{\hat v}$-analytic function on the disc $D(1,1,\hat K_{\hat v})$ whose Taylor series around~$1$ is $\log(1+(X-1))$.

\begin{lemma} \label{relation-linear-non-arch}
Let $s \in C(\hat K)$.
Let $\phi \colon G \to \GG_m$ be a homomorphism such that $\phi(s)=1$.
Then, for every $(r_v)$-relevant non-archimedean or real place $\hat v$ of~$\hat K$, the linear relation~$P_\phi$ is a $\hat v$-adic relation between $F_1, \dotsc, F_n$ at~$x(s)$.
\end{lemma}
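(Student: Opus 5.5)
The plan is to evaluate the functions $F_i^\vhatan$ at $\xi := x(s)$ and recognise $\sum_i b_i F_i^\vhatan(\xi)$ as the $\hat v$-adic logarithm of $\phi(s) = 1$.

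First, fix an $(r_v)$-relevant place $\hat v$, so that $\abs{\xi}_{\hat v} < \min\{1, r_{\hat v}\}$. By the definition of $U_v$, the point $s$ then lies in $U_{\hat v}$. Condition~(i) on the system of radii gives $p_i(s) = 1 + a_i\xi \in D(1,1,\hat K_{\hat v})$ for every~$i$. Since $F_i$ is the Taylor series of $\log(1+a_iX)$, the power series $\log(1+(Y-1))$ converges on this disc. Composing, we get
\[ F_i^\vhatan(\xi) = \log^\vhatan(p_i(s)). \]
This is an identity of convergent power series in $\xi$ on the disc $\abs{\xi}_{\hat v} < r_{\hat v}$; acceptability gives $r_{\hat v} \le R_{\hat v}(F_i)$, so both sides make sense.

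Next, $P_\phi(F_1^\vhatan(\xi),\dotsc,F_n^\vhatan(\xi)) = \sum_i b_i \log^\vhatan(p_i(s))$. I would use the homomorphism property of $\log^\vhatan$ on $D(1,1,\hat K_{\hat v})$: for $u,w$ in this disc, $uw$ and $u^{-1}$ also lie in it, and
\[ \log^\vhatan(uw) = \log^\vhatan(u) + \log^\vhatan(w). \]
In the non-archimedean case this is the standard functional equation of the $p$-adic logarithm, and the disc is a group under multiplication. In the real case, $D(1,1,\RR) = (0,2)$ and $\log^\vhatan$ is the usual real logarithm, which is a homomorphism on $\RR_{>0}$. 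From this, $\sum_i b_i \log^\vhatan(p_i(s)) = \log^\vhatan(\phi(s)) = \log^\vhatan(1) = 0$.

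The main obstacle is the real case, because $\prod_i p_i(s)^{b_i}$ may leave $(0,2)$. To handle it, I would note that $\log^\vhatan$ agrees with the real logarithm on $(0,2)$, and the real logarithm is a homomorphism on all of $\RR_{>0}$. In the non-archimedean case there is no such issue, since the disc is closed under products and inverses. This is also the step that fails at complex places: there the branch of the logarithm on the disc is single-valued only on that disc, and a sum of logarithms may differ by a multiple of $2\pi i$. This explains why the lemma is restricted to non-archimedean and real places.
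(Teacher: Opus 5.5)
Your proposal is correct and follows essentially the same route as the paper: identify $F_i^\vhatan(x(s))$ with $\log^\vhatan(p_i(s))$, then use that $\log^\vhatan$ is a group homomorphism on $D(1,1,\hat K_{\hat v})$ at non-archimedean places and is the restriction of the real logarithm on $\RR_{>0}$ at real places. This gives $\sum_i b_i \log^\vhatan(p_i(s)) = \log^\vhatan(\phi(s)) = 0$. Your explicit check that $p_i(s)$ lies in the disc where $\log^\vhatan$ is defined, and your handling of products leaving $(0,2)$ in the real case, are details the paper leaves implicit.
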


\begin{proof}
By a well-known result in $p$-adic analysis, if $\hat v$ is non-archimedean, $\log^\vhatan$ is a rigid group homomorphism $D(1,1,\hat K_{\hat v}) \to \GG_{a,\hat K}^\vhatan$ \cite[V.4.1]{Rob00}.
Similarly, if $\hat v$ is real, then $\log^\vhatan$ is the restriction to $D(1,1,\RR)$ of the group homomorphism $\log \colon (\RR_{>0}, \cdot) \to (\RR, +)$.

Therefore, writing $\phi(X_1,\dotsc,X_n) = X_1^{b_1} \dotsm X_n^{b_n}$, we have
\begin{align*}
  & P_\phi^\van \bigl( F_1^\van(x(s)^\vhatan), \dotsc, F_n^\van(x(s)^\vhatan) \bigr)
\\& = b_1 \log^\vhatan(p_1(s)^\vhatan) \, + \, \dotsb \, + \, b_n \log^\vhatan(p_n(s)^\vhatan)
\\& = \log^\vhatan \bigl( (p_1(s)^{b_1} \dotsm p_n(s)^{b_n})^\vhatan \bigr)
\\& = \log^\vhatan(1) = 0.
\qedhere
\end{align*}
\end{proof}

On the other hand, for a complex place~$\hat w$, $\log^\whatan$ is not a restriction of a group homomorphism, but only a homomorphism up to multiples of~$2\pi i$ (the complex periods of~$\GG_m$).
In other words,
\[ \log^\whatan(zz') \in \log^\whatan(z) + \log^\whatan(z') + 2\pi i\ZZ. \]
Consequently, the same argument as in the proof of \cref{relation-linear-non-arch} yields only an \emph{inhomogeneous} $\hat w$-adic relation, with a multiple of~$2\pi i$ on the right hand side:

\begin{lemma} \label{relation-linear-arch-inhomog}
Let $s \in C(\hat K)$. Let $\phi \colon G \to \GG_m$ be a homomorphism such that $\phi(s)=1$.
Then, for every $(r_v)$-relevant complex place $\hat w$ of~$\hat K$, we have
\[ P_\phi^\wan(F_1^\wan(x(s)^\whatan), \dotsc, F_n^\wan(x(s)^\whatan)) \in 2\pi i\ZZ. \]
\end{lemma}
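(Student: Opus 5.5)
We follow the proof of \cref{relation-linear-non-arch}, replacing the group-homomorphism property of the real or $p$-adic logarithm by the weaker fact that the complex logarithm is a homomorphism modulo $2\pi i\ZZ$.

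Write $\phi(X_1,\dotsc,X_n) = X_1^{b_1}\dotsm X_n^{b_n}$ with $b_i \in \ZZ$, and fix an $(r_v)$-relevant complex place~$\hat w$ of~$\hat K$, identifying $\hat K_{\hat w}$ with~$\CC$.

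The first step is to identify each evaluation $F_i^\wan(x(s)^\whatan)$ with $\log^\whatan(p_i(s)^\whatan)$, where $\log^\whatan$ is the principal branch on the disc $D(1,1,\CC)$. As formal power series, $F_i(X) = \log(1+a_iX)$ is the composite of the Taylor series of $\log(1+(X-1))$ at~$1$ with the polynomial $1+a_iX$. Relevance gives $\abs{x(s)}_{\hat w} < \min\{1, r_{\hat w}\}$, so $s \in U_w$. Condition~(i) on the system of radii in section~\ref{sssec:system-of-radii} then gives $p_i(s) = 1 + a_i x(s) \in D(1,1,\CC)$.

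It would be tempting to justify the substitution by saying that $1+a_iX$ maps the whole disc $D(0,r_w)$ into $D(1,1)$. Even granting that, it only shows that $\log^\whatan(1+a_iX)$ is holomorphic on that disc and agrees with $F_i$ near~$0$. For this to identify it with $F_i^\wan$ at $x(s)$, we also need $\abs{x(s)} < R_w(F_i)$. The radius $R_w(F_i) = 1/\abs{a_i}_w$ is at least $r_w$ by acceptability, and relevance gives $\abs{x(s)} < r_w$. The identity theorem then yields
\[ F_i^\wan(x(s)) = \log^\whatan(p_i(s)). \]
Checking these domain issues is the only point that needs care.

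The second step uses the elementary identity $\exp(\log^\whatan z) = z$ for $z \in D(1,1,\CC)$. Since $P_\phi$ is linear with integer coefficients,
\[ \exp\Bigl( \sum_i b_i \log^\whatan(p_i(s)) \Bigr) = \prod_i p_i(s)^{b_i} = \phi(s) = 1. \]
Because the kernel of $\exp$ on~$\CC$ is $2\pi i\ZZ$, the sum $\sum_i b_i F_i^\wan(x(s)^\whatan)$ lies in $2\pi i\ZZ$, which is the claim.
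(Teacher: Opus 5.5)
Your proposal is correct and follows essentially the same route as the paper. The paper reruns the chain of equalities from the proof of \cref{relation-linear-non-arch}, using that the complex logarithm is a homomorphism only modulo $2\pi i\ZZ$; your appeal to $\ker(\exp)=2\pi i\ZZ$ encodes exactly this, and your identification $F_i^\wan(x(s)^\whatan)=\log^\whatan(p_i(s))$ via condition~(i) on the radii and acceptability makes explicit the first equality the paper uses implicitly.
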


Given \emph{two} multiplicative relations satisfied by a point~$s \in G(\hat K)$, we can take a linear combination of the inhomogeneous relations given by \cref{relation-linear-arch-inhomog} in order to obtain a (homogeneous) $\hat w$-adic relation.
In order to compare with the relations at non-archimedean places, as mentioned in \cref{arch-vs-non-arch-relations}, it is convenient to describe this in terms of taking a combination of two homomorphisms which annihilate~$s$.

\begin{lemma} \label{relation-linear-arch}
Let $s \in C(\hat K) \cap \Sigma_2$.
Let $\hat w$ be an $(r_v)$-relevant complex place $\hat w$ of~$\hat K$.
Then there exists a non-constant homomorphism $\phi_{\hat w} \colon G \to \GG_m$ such that $\phi_{\hat w}(s)=1$ and the linear relation~$P_{\phi_{\hat w}}$ is a $\hat w$-adic relation between $F_1,\dotsc,F_n$ at~$x(s)$.
\end{lemma}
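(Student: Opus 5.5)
Since $s \in \Sigma_2$, the point $s$ lies in an algebraic subgroup of codimension at least~$2$. So there are two homomorphisms $\phi_1,\phi_2 \colon G \to \GG_m$ with $\phi_1(s)=\phi_2(s)=1$ whose exponent vectors $b^{(1)}, b^{(2)} \in \ZZ^n$ are $\QQ$-linearly independent. Indeed, the character group of $G/H$ for a subgroup $H$ of codimension~$\geq 2$ has rank~$\geq 2$. By \cref{relation-linear-arch-inhomog}, applied at the $(r_v)$-relevant complex place~$\hat w$, we get
\[ P_{\phi_j}^\wan\bigl(F_1^\wan(x(s)^\whatan), \dotsc, F_n^\wan(x(s)^\whatan)\bigr) = 2\pi i k_j, \qquad k_j \in \ZZ, \quad j=1,2. \]
We then set $\phi_{\hat w} = \phi_1^{k_2}\phi_2^{-k_1}$. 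This satisfies $\phi_{\hat w}(s)=1$. Since $P_{\phi}$ depends additively on $\phi$ (it is linear in the exponent vector), we have $P_{\phi_{\hat w}} = k_2 P_{\phi_1} - k_1 P_{\phi_2}$. Evaluating at the $\hat w$-adic values of the $F_i$ gives $k_2 \cdot 2\pi i k_1 - k_1 \cdot 2\pi i k_2 = 0$, so $P_{\phi_{\hat w}}$ is a $\hat w$-adic relation.

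It remains to ensure that $\phi_{\hat w}$ is non-constant, i.e.\ that its exponent vector $k_2 b^{(1)} - k_1 b^{(2)}$ is non-zero. By linear independence of $b^{(1)},b^{(2)}$, this vector vanishes only if $k_1=k_2=0$. In that case we instead take $\phi_{\hat w} = \phi_1$, which is non-constant. Its relation evaluates to $2\pi i k_1 = 0$, so $P_{\phi_1}$ is already a $\hat w$-adic relation.

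One technical point needs checking: \cref{relation-linear-arch-inhomog} is stated for $s \in C(\hat K)$ and for $(r_v)$-relevant complex places, which is our setting. We also need that the values $F_i^\wan(x(s))$ are defined, i.e.\ that $\abs{x(s)}_{\hat w} < r_{\hat w}$. This holds by relevance, and condition (i) on the system of radii gives $p_i(s) \in D(1,1)$, so $F_i^\wan(x(s)) = \log^\whatan(p_i(s))$. This identification is already used in that lemma. The main (mild) obstacle is simply the bookkeeping that the integers $k_j$ are genuine integers, which is exactly the content of \cref{relation-linear-arch-inhomog}. After that, the construction is a $2\times 2$ elimination of the $2\pi i$ terms.
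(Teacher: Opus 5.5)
Your proposal is correct and follows the paper's own proof. The paper likewise takes two $\ZZ$-linearly independent characters killing $s$, uses \cref{relation-linear-arch-inhomog} to get integers $r_1,r_2$, and forms $\phi_1^{a_1}\phi_2^{a_2}$ with $a_1,a_2$ not both zero and $a_1r_1+a_2r_2=0$; your explicit choice $(k_2,-k_1)$, with the fallback to $\phi_1$ when $k_1=k_2=0$, is just a concrete instance of that choice.
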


\begin{proof}
Since $s \in \Sigma_2$, there exist at least two $\ZZ$-linearly independent homomorphisms $\phi_1,\phi_2 \colon G \to \GG_m$ such that $\phi_1(s)=\phi_2(s)=1$.
By \cref{relation-linear-arch-inhomog}, there exist integers $r_1,r_2$ (which depend on the place~$\hat w$) such that
\[ P_{\phi_j}^\wan(F_1^\wan(x(s)^\whatan), \dotsc, F_n^\wan(x(s)^\whatan)) = 2\pi ir_j \]
for each $j=1,2$.

Choose integers $a_1,a_2$, not both zero, such that $a_1r_1 + a_2r_2=0$.
Defining
\[ \phi_{\hat w} = \phi_1^{a_1} \phi_2^{a_2} \colon G \to \GG_m, \]
we obtain a homomorphism of tori satisfying $\phi_{\hat w}(s)=1$ and $P_{\phi_{\hat w}} = a_1 P_{\phi_1} + a_2 P_{\phi_2}$.
Then $P_{\phi_{\hat w}}$ is a $\hat w$-adic relation between $F_1,\dotsc,F_n$ at~$x(s)$.
\end{proof}

\begin{proposition} \label{global-relation}
Let $s \in C(\ov K) \cap \Sigma_2$.
Then there exists a non-zero $(r_v)$-global relation between $F_1,\dotsc,F_n$ at~$x(s)$, of degree at most~$\max \{ \tau(K(s)), 1 \}$.
\end{proposition}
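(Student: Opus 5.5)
Let $\hat K = K(s)$, so that $\xi := x(s)$ lies in $\hat K$. The plan follows \cref{arch-vs-non-arch-relations} and splits according to whether $\hat K$ has any $(r_v)$-relevant complex place.

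\emph{Case 1: at least one $(r_v)$-relevant complex place.} For each $(r_v)$-relevant complex place $\hat w$ of~$\hat K$, \cref{relation-linear-arch} gives a non-constant homomorphism $\phi_{\hat w}$ with $\phi_{\hat w}(s)=1$, such that $P_{\phi_{\hat w}}$ is a $\hat w$-adic relation. Set
\[ P = \prod_{\hat w} P_{\phi_{\hat w}}, \]
the product over the $(r_v)$-relevant complex places. Its degree is at most $\tau(\hat K)$.

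\begin{enumerate}
\item \emph{$P$ is non-zero.} Each factor is non-zero by \cref{linear-relations-non-zero}, since $\phi_{\hat w}\neq 1$. The polynomial ring is a domain, so the product is non-zero.
\item \emph{$P$ vanishes at every $(r_v)$-relevant place $\hat v$.} If $\hat v$ is complex and relevant, $\hat v$ is one of the $\hat w$, and its own factor vanishes at $(F_i^{\hat v\text{-an}}(\xi))_i$. If $\hat v$ is non-archimedean or real, pick any factor $P_{\phi_{\hat w}}$ (one exists by the case hypothesis). Since $\phi_{\hat w}(s)=1$, \cref{relation-linear-non-arch} shows $P_{\phi_{\hat w}}$ is a $\hat v$-adic relation, so the product vanishes.
\end{enumerate}

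One point needs checking: the evaluations used in the lemmas must be the ones in the definition of $\hat v$-adic relation. This holds because relevance gives $\abs{\xi}_{\hat v}<r_{\hat v}\le R_{\hat v}$, and condition (i) on $(r_v)$ puts $p_i(s)$ in $D(1,1)$, so $F_i^{\hat v\text{-an}}(\xi)=\log^{\hat v\text{-an}}(p_i(s))$.

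\emph{Case 2: no $(r_v)$-relevant complex place.} Since $s\in\Sigma_2\subset\Sigma_1$, there is a non-trivial homomorphism $\phi$ with $\phi(s)=1$. Then $P_\phi$ is non-zero by \cref{linear-relations-non-zero}. By \cref{relation-linear-non-arch} it is a $\hat v$-adic relation at every relevant non-archimedean or real place, and these are all the relevant places in this case. So $P_\phi$ is an $(r_v)$-global relation of degree $1$.

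In both cases the degree is at most $\max\{\tau(K(s)),1\}$, which is the claim.

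\emph{Homogeneity.} The definitions take $P$ homogeneous in the variables of~$\cG$. Each $P_\phi$ is a linear form in $Y_1,\dots,Y_n$ and does not involve the variable for the function $1$, so products of them remain homogeneous. The relation is therefore a homogeneous relation between the elements of~$\cG$, as well as between $F_1,\dots,F_n$.

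I expect the only real obstacle to be the bookkeeping identification of analytic evaluations described at the end of Case 1. Everything else is a direct assembly of the preceding lemmas.
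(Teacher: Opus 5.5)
Your proof is correct and follows the paper's argument essentially step for step. You use the same case split according to whether $K(s)$ has an $(r_v)$-relevant complex place: in the first case you take the product $\prod_{\hat w} P_{\phi_{\hat w}}$ and use a single factor with \cref{relation-linear-non-arch} to cover the non-archimedean and real places, and in the second case you take a single $P_\phi$. Your additional remarks (non-vanishing of the product, identifying the analytic evaluations with $\log(p_i(s))$, and homogeneity) are harmless bookkeeping that the paper leaves implicit.
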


\begin{proof}
Let $\hat K = K(s)$.

First, suppose that there exists at least one $(r_v)$-relevant complex place of~$\hat K$ for~$x(s)$.
In this case, let $\phi_{\hat w}$ be the homomorphism given by \cref{relation-linear-arch}, for each $(r_v)$-relevant complex place~$\hat w$.
Then let
\[ P = \prod_{\hat w} P_{\phi_{\hat w}}, \]
where the product is over all $(r_v)$-relevant complex places of~$\hat K$.
Since each $P_{\phi_{\hat w}}$ is linear, $\deg(P) \leq \tau(\hat K)$.
By \cref{linear-relations-non-zero}, $P \neq 0$.
Thanks to \cref{relation-linear-arch}, $P$ is a $\hat w$-adic relation for each $(r_v)$-relevant complex place~$\hat w$.
Furthermore, if we pick one complex place~$\hat w$, then, by \cref{relation-linear-non-arch}, $P_{\phi_{\hat w}}$ (hence~$P$) is a $\hat v$-adic relation for every $(r_v)$-relevant non-archimedean or real place.
Thus, $P$ is an $(r_v)$-global relation.

Now, suppose that there are no $(r_v)$-relevant complex places for~$x(s)$.
In this case, choose a non-constant homomorphism $\phi \colon G \to \GG_m$ such that $\phi(s) = 1$.
Let $P$ be the linear relation $P_\phi$.
This is an $(r_v)$-global relation by \cref{relation-linear-non-arch}.
\end{proof}

\subsubsection{Application of principle of global relations}

Consider a point $s \in C(\ov K) \cap \Sigma_2$.
After excluding finitely many points, we may assume that $x(s)$ is an ordinary point of the system of differential equations satisfied by~$\cG$.
(In fact, by \cref{sin-Lambda} below, $x(s)$ is ordinary for every $s \in C(\ov K)$.)
Since the elements of~$\cG$ are homogeneously algebraically independent, the $(r_v)$-global relation constructed in \cref{global-relation} is non-trivial.
Applying \cref{andre-thm-E}, using the $(r_v)$-global relation constructed in \cref{global-relation}, we obtain a bound of the form
\[ h(x(s)) < \newC* \delta^{\newC*}. \]

\subsection{Explicit height bound in tori} \label{ssec:tori-explicit}

\subsubsection{Input data}

Let $\Lambda$ denote the differential operator $d/dX - \Gamma$, where
\[ \Gamma =
\begin{pmatrix}
   0                 & 0      & \cdots & 0
\\ \frac{a_1}{1+a_1X} & 0      & \cdots & 0 
\\ \vdots            & \vdots &        & \vdots
\\ \frac{a_n}{1+a_nX} & 0      & \cdots & 0 
\end{pmatrix}. \]
According to equation~\eqref{eqn:F_i-derivative}, $(1,F_1,\dotsc,F_n)^t$ is a solution of~$\Lambda$.

\begin{lemma} \label{sin-Lambda}
The singularities of~$\Lambda$ are the elements of~$\Xi$. They are all non-apparent.
\end{lemma}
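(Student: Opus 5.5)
The plan is to read the finite singularities directly off the matrix~$\Gamma$, and then show non-apparentness by exhibiting a solution that is not a formal power series at each singular point.

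\textbf{Locating the singularities.} The only non-zero entries of~$\Gamma$ are the rational functions $a_i/(1+a_iX)$ in the first column. Each of these has a simple pole exactly at $X=-1/a_i$, since $a_i \neq 0$. Hence the finite singularities of~$\Lambda$ are the points $-1/a_i$ for $1 \leq i \leq n$. By~\eqref{eqn:Xi-ai}, this set is precisely~$\Xi$. Note that several of the $a_i$ may coincide, in which case the corresponding poles coincide; this causes no problem.

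\textbf{Non-apparentness.} Fix $a = -1/a_i \in \Xi$. Suppose for contradiction that $a$ were apparent, so that the solution space of~$\Lambda$ in $\powerseries{\ov K}{X-a}$ has dimension $n+1$.

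Any solution $(u_0,u_1,\dotsc,u_n)^t$ of~$\Lambda$ satisfies $u_0' = 0$, so $u_0$ is a constant~$c$, and $u_j' = c\,a_j/(1+a_jX)$ for each $j \geq 1$.

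Take the index $j=i$. In $\powerseries{\ov K}{X-a}$ we have $1+a_iX = a_i(X-a)$, so the equation becomes
\[ u_i' = \frac{c}{X-a}. \]
This has no formal power series solution unless $c=0$, since the residue term $c/(X-a)$ cannot be the derivative of a power series.

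So every power-series solution at~$a$ has $u_0 = 0$. Then all the equations reduce to $u_j'=0$, and the solution space is spanned by vectors of the form $(0,*,\dotsc,*)$. It therefore has dimension at most~$n$, which is less than $n+1$. This contradicts the assumption, so $a$ is not apparent.

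\textbf{Main obstacle.} There is no real obstacle. The only points needing care are that the pole at $-1/a_i$ is genuine, which holds because $a_i \neq 0$, and that coincident values among the $a_i$ do not affect the argument. Both are immediate.
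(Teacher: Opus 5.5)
Your proof is correct, but it establishes non-apparentness by a different route from the paper; the identification of the singularities with $\Xi$ via \eqref{eqn:Xi-ai} is the same. For non-apparentness, the paper observes that the residue of $\Gamma$ at $-1/a_i$ is the nilpotent matrix with a single entry $1$ in position $(i+1,1)$. This matrix is not diagonalisable, and the paper then cites the criterion of \cite[Rmk.~1]{BM15} that such a simple pole cannot be an apparent singularity.

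You instead compute the formal solution space at $a=-1/a_i$ directly, exploiting the triangular shape of $\Gamma$. The first component is a constant $c$, and the $i$-th equation becomes $u_i' = c/(X-a)$ in $\ov K(\!(X-a)\!)$, which forces $c=0$. So the solution space is exactly $\{(0,c_1,\dotsc,c_n)^t\}$, of dimension $n<n+1$.

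Your argument is self-contained and elementary, with no appeal to the theory of regular singular points, and it gives the exact dimension of the solution space. The paper's argument is shorter and rests on a criterion valid for arbitrary systems with simple poles, but that criterion is a black box here.

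Your caveat about coinciding $a_i$ is harmless but unnecessary. If $a_i=a_j$ with $i\neq j$, then $C$ would lie in the proper algebraic subgroup $\{X_i=X_j\}$, so the $a_i$ are distinct under the hypotheses. The paper's description of the residue matrix as having a single non-zero entry tacitly uses exactly this.
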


\begin{proof}
By~\eqref{eqn:Xi-ai}, $\Xi$ is the set of poles of entries of~$\Gamma$, that is, the singularities of~$\Lambda$.

For $1 \leq i \leq n$, the residue matrix of~$\Gamma$ at~$-1/a_i$ has entry $1$ at position~$(i+1,1)$ and $0$ everywhere else.
This matrix is not diagonalisable, so $-1/a_i$ is not an apparent singularity \cite[Rmk.~1]{BM15}.
\end{proof}

\begin{lemma} \label{bounds-d-module-logarithm}
The generic radii of solvability, global radius and size of the differential operator~$\Lambda$ satisfy:
\begin{enumerate}[(a)]
\item for every place $v$ of~$K$, $R_v(\Lambda) \geq 1$;
\item $\rho(\Lambda) = 0$;
\item $\sigma(\Lambda) \leq 1$.
\end{enumerate}
\end{lemma}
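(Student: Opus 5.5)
The plan is to compute the matrices $\Gamma_{[j]}$ explicitly, which is easy because $\Gamma$ is nilpotent with a very special shape. Since $\Gamma$ has nonzero entries only in the first column, below the diagonal, we have $\Gamma^2=0$, and more precisely $\Gamma_{[j]}\Gamma = 0$ for $j\ge 1$ once we know $\Gamma_{[j]}$ also has nonzero entries only in the first column below the diagonal. By induction on $j$ from the recurrence $\Gamma_{[j+1]} = \Gamma_{[j]}\Gamma + \frac{d}{dX}\Gamma_{[j]}$, I will show that for $j \geq 1$, $\Gamma_{[j]}$ has entry
\[ \frac{d^{j-1}}{dX^{j-1}} \frac{a_i}{1+a_iX} = \frac{(-1)^{j-1}(j-1)!\,a_i^j}{(1+a_iX)^j} \]
at position $(i+1,1)$ and zeros elsewhere. (This is also clear directly: the solutions are $(c_0, c_0F_i + c_i)$, and $\Gamma_{[j]}$ must encode $d^jF_i/dX^j$.)

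Then $\Gamma_{[j]}/j!$ has nonzero entries $\pm\frac{1}{j}\,a_i^j(1+a_iX)^{-j}$. For a non-archimedean place $v$, the Gauss norm of $a_i^j/(1+a_iX)^j$ equals $\bigl(|a_i|_v/\max\{1,|a_i|_v\}\bigr)^j \leq 1$, since the Gauss norm is multiplicative and $|1+a_iX|_{\mathrm{Gauss}} = \max\{1,|a_i|_v\}$. Hence $\lVert \Gamma_{[j]}/j!\rVert_v \leq |1/j|_v$. Therefore
\[ h_{v,n}(\Lambda) \leq \frac1n \max_{j\le n}\log^+|1/j|_v = \frac1n \log\max_{j\le n}|j|_v^{-1}, \]
which is nonzero only when $p_v \leq n$. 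Summing with the weights $\sum_v'$ (which over places above $p$ sum to $1$), we get $\sum_v' h_{v,n}(\Lambda) \leq \frac1n\sum_{p\le n}\log p^{\lfloor\log_p n\rfloor} = \frac1n\log\operatorname{lcm}(1,\dots,n)$. By the prime number theorem, this tends to $1$ as $n \to \infty$, which gives (c). (If a non-asymptotic form is preferred, Rosser--Schoenfeld's bound $\psi(n)\le 1.04n$ suffices for $\limsup \le 1$, and the limit statement is all that is needed.) For (b), for fixed $v$ we have $\limsup_n h_{v,n}(\Lambda) \le \limsup_n \frac{1}{n}\log n = 0$, so $\rho(\Lambda)=0$ by the second expression in its definition.

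For (a), at archimedean places $R_v = 1$ by definition. At non-archimedean places I would either invoke the identity $\log^+(1/R_v) = \limsup_n h_{v,n}$ (just shown to be $0$), or argue directly: at the generic point $t_v$ with $|t_v - a|_v = 1$, the functions $\log(1+a_iX)$ expanded around $t_v$ have coefficients $\frac{1}{j}\bigl(a_i/(1+a_it_v)\bigr)^j$ of absolute value at most $|1/j|_v$, hence converge on the open unit disc. So the solution matrix exists on $D(t_v,r)$ for all $r<1$, giving $R_v(\Lambda)\ge 1$.

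The only delicate point is the bookkeeping in (c): making sure that the factor $j!$ cancels exactly to leave $1/j$, and that the weighted sum over places reduces to $\log\operatorname{lcm}(1,\dots,n)$. I expect no real obstacle beyond this.
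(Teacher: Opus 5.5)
Your proof is correct, but it takes a different route from the paper. The paper never computes $\Gamma_{[j]}$. It observes that the $\partial$-module $\cM$ attached to $\Lambda$ is an extension $0 \to K(X) \to \cM \to K(X)^n \to 0$ of trivial $\partial$-modules, which have $R_v=1$ and $\rho=\sigma=0$. It then quotes Andr\'e's general results on extensions: \cite[IV, 2.5, Prop.~1]{And89} for (a), with (b) immediate from (a), and \cite[IV, 4.1, Prop.\ and Lemma~2(a)]{And89} for (c), giving $\sigma(\cM)\le 1+2\sigma(K(X))+2\sigma(K(X)^n)+2\sigma((K(X)^n)^*)=1$.

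You instead exploit the fact that in this basis $\Gamma_{[j]}\Gamma=0$, so $\Gamma_{[j]}=\frac{d^{j-1}}{dX^{j-1}}\Gamma$ has entries $(-1)^{j-1}(j-1)!\,a_i^j(1+a_iX)^{-j}$, whose Gauss norms are at most $1$. Everything then reduces to $\sum_v' h_{v,n}(\Lambda)\le \frac1n\log\lcm(1,\dots,n)$ and the prime number theorem. Your argument for (a) at the generic point is also sound, since $|1+a_it_v|_v=\max\{1,|a_i|_v\}$.

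Comparing the two routes:
\begin{itemize}
\item Your argument is self-contained and gives an explicit bound on each $h_{v,n}(\Lambda)$. It also makes visible that the constant $1$ in (c) is exactly $\lim_n \frac1n\log\lcm(1,\dots,n)$.
\item The paper's argument is shorter given Andr\'e's machinery and is basis-independent. It would apply equally to other extensions whose sub- and quotient modules are controlled, without needing a basis in which $\Gamma$ is this simple.
\end{itemize}

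One small correction: your parenthetical about Rosser--Schoenfeld is wrong. A bound $\psi(n)\le 1.04n$ only yields $\limsup_n \frac1n\log\lcm(1,\dots,n)\le 1.04$, not $\le 1$. To get $\sigma(\Lambda)\le 1$ you genuinely need $\psi(n)\sim n$, i.e.\ the prime number theorem. Your main argument uses exactly that, so nothing breaks.
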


\begin{proof}
Let $\cM$ denote the $\partial$-module structure on $K(X)^{n+1}$ induced by~$\Lambda$. 
By definition, $R_v(\Lambda)=R_v(\cM)$, $\rho(\Lambda)=\rho(\cM)$ and $\sigma(\Lambda)=\sigma(\cM)$.
Thanks to \eqref{eqn:F_i-derivative}, the $\partial$-module~$\cM$ sits in an exact sequence
\[ 0 \to K(X) \to \cM \to K(X)^n \to 0, \]
where $K(X)$ carries its standard $\partial$-module structure.

We have $R_v(K(X)^n) = 1$ for all~$v$, and $\rho(K(X)^n) = \sigma(K(X)^n) = 0$.
Consequently, conclusion~(a) follows from \cite[IV, 2.5, Prop.~1]{And89}.
Conclusion~(b) is an immediate consequence of~(a).

For conclusion~(c), by \cite[IV, 4.1, Prop.\ and Lemma~2(a)]{And89},
we obtain
\[ \sigma(\cM) \leq 1 + 2\sigma(K(X)) + 2\sigma(K(X)^n) + 2\sigma((K(X)^n)^*) = 1.
\qedhere \]
\end{proof}

\begin{lemma} \label{bounds-series-logarithm}
The radii of convergence, global radius and size of the sequence of power series $1,F_1,\dotsc,F_n$ satisfy:
\begin{enumerate}[(a)]
\item for every place $v$ of~$K$, $R_v(1,F_1,\dotsc,F_n) \geq \sigma_v(\Xi)$;
\item $\rho(1,F_1,\dotsc,F_n) \leq \sigma(\Xi)$;
\item $\sigma(1,F_1,\dotsc,F_n) \leq \sigma(\Xi) + 1$.
\end{enumerate}
\end{lemma}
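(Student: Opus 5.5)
We work directly with the explicit coefficients. The constant series $1$ has all coefficients beyond the zeroth equal to $0$, and its zeroth coefficient is $1$, so it contributes nothing to any of the quantities. For $F_i = \log(1+a_iX)$ the coefficient of $X^j$ is $(-1)^{j+1}a_i^j/j$ for $j \geq 1$, with constant term $0$. By \eqref{eqn:Xi-ai}, $\Xi = \{-1/a_i\}$, so
\[ \min_i \abs{1/a_i}_v = \min_i \abs{a_i}_v^{-1}. \]
Hence $\sigma_v(\Xi) = \min\{1, \min_i \abs{a_i}_v^{-1}\}$, and therefore
\[ \log(1/\sigma_v(\Xi)) = \max_i \log^+\abs{a_i}_v. \]

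For (a), we compute the $v$-adic radius of convergence of $F_i$ as
\[ \liminf_j \abs{a_i^j/j}_v^{-1/j} = \abs{a_i}_v^{-1} \cdot \liminf_j \abs{j}_v^{1/j}. \]
At archimedean $v$ we have $\abs{j}_v^{1/j} = j^{1/j} \to 1$. At non-archimedean $v$ we have $\abs{j}_v \geq j^{-1}$, so $\abs{j}_v^{1/j} \geq j^{-1/j} \to 1$, and also $\abs{j}_v^{1/j} \leq 1$. In both cases the liminf equals $1$. Thus $R_v(F_i) = \abs{a_i}_v^{-1}$, and so
\[ R_v(1,F_1,\dotsc,F_n) = \min_i \abs{a_i}_v^{-1} \geq \sigma_v(\Xi). \]
Part (b) follows immediately: $\log^+(1/R_v) \leq \log(1/\sigma_v(\Xi))$, and summing with $\sum_v'$ gives $\rho \leq \sigma(\Xi)$.

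For (c), we bound $h_{v,N}$ place by place. We have
\[ h_{v,N} = \frac{1}{N}\max_{i,\, j\leq N} \log^+\abs{a_i^j/j}_v. \]
At archimedean $v$, since $\abs{1/j}_v \leq 1$, we get
\[ \log^+\abs{a_i^j/j}_v \leq j\log^+\abs{a_i}_v \leq N \log(1/\sigma_v(\Xi)). \]
At non-archimedean $v$, we get
\[ \log^+\abs{a_i^j/j}_v \leq j\log^+\abs{a_i}_v + \log\abs{1/j}_v. \]
Moreover, $\max_{j\leq N}\log\abs{1/j}_v = \log \abs{\lcm(1,\dotsc,N)}_v^{-1}$, which is nonzero only for $p_v \leq N$. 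Summing over all places with weights, the first contributions give at most $\sigma(\Xi)$. The second contributions give
\[ \frac{1}{N}{\sum_v}' \log\abs{1/\lcm(1,\dotsc,N)}_v = \frac{1}{N}\log \lcm(1,\dotsc,N), \]
using the product formula for the positive integer $\lcm(1,\dotsc,N)$. Note that the weighted sum over places above a prime $p$ of $\log\abs{\cdot}_v$ equals the $p$-adic $\log\abs{\cdot}_p$. So
\[ {\sum_v}' h_{v,N} \leq \sigma(\Xi) + \frac{1}{N}\log\lcm(1,\dotsc,N). \]
By the prime number theorem, $\log\lcm(1,\dotsc,N) = \psi(N) \sim N$, so the limsup over $N$ is at most $\sigma(\Xi)+1$. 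This is (c).

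The main care point is (c). The maxima over $i$ and $j$ must be taken inside each place before summing, and the $\log^+$ must be split correctly. At non-archimedean places, $\log^+(xy) \leq \log^+x + \log^+y$, with $\abs{1/j}_v \geq 1$, justifies the decomposition above. We must also check that only the limsup matters, so that the asymptotic $\psi(N)/N \to 1$ suffices and no explicit Chebyshev bound is needed.
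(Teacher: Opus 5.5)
Your proof is correct. Parts (a) and (b) follow the paper's argument. The only addition is that you check, via $\abs{j}_v^{1/j}\to 1$, that $\log(1+X)$ has radius $1$ at every place, which the paper takes as known.

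For (c) you take a different route. The paper passes to the derivatives $h_i = a_i/(1+a_iX)$ and uses two lemmas of Andr\'e:
\begin{itemize}
\item \cite[I, 4.1, Lemma]{And89}, which gives $\sigma(1,h_1,\dotsc,h_n)=\sigma(\Xi)$ (the size of a tuple of rational functions equals the height of its pole set);
\item \cite[I, 1.4, Lemma~2(c)]{And89}, which says that taking primitives $\int_0$ raises the size by at most $1$.
\end{itemize}
You instead bound the coefficients $(-1)^{j+1}a_i^j/j$ directly, splitting $\log^+\abs{a_i^j/j}_v$ into two parts. The $a_i^j$ part contributes exactly $\sigma(\Xi)$. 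The $1/j$ part contributes $\frac{1}{N}\log\lcm(1,\dotsc,N)=\psi(N)/N\to 1$.

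The paper's route is shorter and modular: the two cited lemmas apply unchanged to primitives of arbitrary rational functions. Your route is self-contained, and it only needs an inequality rather than Andr\'e's equality for sizes of rational functions. It also shows that the ``$+1$'' is exactly the prime number theorem contribution of the denominators $j$, which is the same fact underlying Andr\'e's integration lemma.
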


\begin{proof}
The power series $\log(1+X) \in \QQ[X]$ has radius of convergence~$1$ at every place.
Therefore, $F_i(X) = \log(1+a_iX)$ has $v$-adic radius of convergence $\abs{a_i}_v^{-1}$, so
\[ R_v(1,F_1,\dotsc,F_n) = \min \{ \infty, \abs{a_1}_v^{-1}, \dotsc, \abs{a_n}_v^{-1} \} \geq \sigma_v(\Xi). \]

Conclusion~(b) follows immediately from~(a).

For~(c), let $h_i = \rd F_i/\rd X = a_i/(1+a_iX)$.
Since the $h_i$ are rational functions, by \cite[I, 4.1, Lemma]{And89}, we have
\[ \sigma(1,h_1,\dotsc,h_n) = \sigma(\mathrm{poles}(1,h_1,\dotsc,h_n)) = \sigma(\Xi). \]
Since $F_i = \int_0 h_i$, by \cite[I, 1.4, Lemma~2(c)]{And89}, we have
\[ \sigma(1,F_1,\dotsc,F_n) \leq \sigma(1,h_1,\dotsc,h_n) + 1.
\qedhere \]
\end{proof}

\begin{lemma} \label{radii-simple-neighbourhoods}
The system of radii defined by $r_v = \sigma_v(\Xi)$ satisfies conditions (i) and~(ii) of section~\ref{sssec:system-of-radii}.
\end{lemma}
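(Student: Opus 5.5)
We must check two things for $r_v = \sigma_v(\Xi)$: condition (i), that $U_v \subset (p_i^\van)^{-1}(D(1,1,K_v))$ for every $v$ and $i$, and condition (ii), that $(r_v)$ is acceptable for $\cG$. Note that $r_v \leq 1$ by definition of $\sigma_v$.

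\emph{Condition (ii).} This comes straight from \cref{bounds-series-logarithm}. Part~(a) there gives $r_v = \sigma_v(\Xi) \leq R_v(1,F_1,\dotsc,F_n)$. For summability, $\log^+(1/r_v) = \log(1/\sigma_v(\Xi))$ because $\sigma_v(\Xi) \leq 1$. Hence
\[ {\sum_v}' \log^+(1/r_v) = \sigma(\Xi), \]
which is the finite Weil height of a finite set of algebraic numbers. In particular $\sigma_v(\Xi) = 1$ for almost all $v$.

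\emph{Condition (i).} A point $t \in U_v$ corresponds to a value $x = x(t)$ with $\abs{x}_v < \sigma_v(\Xi)$, working over $K_v$ or a finite extension with the extended absolute value. Since $p_i = 1 + a_i x$, we need $\abs{a_i x}_v < 1$. By \eqref{eqn:Xi-ai} we have $-1/a_i \in \Xi$, so
\[ \sigma_v(\Xi) \leq \abs{-1/a_i}_v = \abs{a_i}_v^{-1}. \]
Therefore $\abs{a_i x}_v < \abs{a_i}_v \sigma_v(\Xi) \leq 1$, and so $p_i(t) \in D(1,1)$. This works uniformly for archimedean and non-archimedean $v$, since it only uses multiplicativity of $\abs{\cdot}_v$.

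\emph{Possible obstacle.} The only point needing care is the meaning of the open versus closed discs $D(0,r_v,K_v)$ and $D(1,1,K_v)$. If $D$ denotes the open disc, the strict inequality carries through as above. If $D$ were closed, we would use $\abs{x}_v \leq \sigma_v(\Xi)$ to get $\abs{a_i x}_v \leq 1$; this is still consistent with the convention used for the domain of $\log^\vhatan$. Either way, the argument is a direct check with no substantial difficulty.
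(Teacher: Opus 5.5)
Your proof is correct and is essentially the paper's argument: condition~(i) follows from $\abs{a_i x}_v < \abs{a_i}_v \sigma_v(\Xi) \leq 1$ using $-1/a_i \in \Xi$, and condition~(ii) follows from \cref{bounds-series-logarithm}(a) together with $\sigma_v(\Xi)=1$ for almost all~$v$. Your extra remarks on points over extensions of $K_v$ and on open versus closed discs are harmless; the paper works with open discs, as its strict inequality indicates.
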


\begin{proof}

For each place $v$ of~$K$, we have
\[ \abs{p_i(x^{-1}(z)) - 1}_v = \abs{(1+a_iz) - 1}_v = \abs{a_iz}_v < \abs{a_i}_v r_v \leq \sigma_v(\Xi)^{-1} \sigma_v(\Xi) = 1 \]
for all $z \in D(0,r_v,K_v)$.
This proves condition~(i).

\Cref{bounds-series-logarithm}(a) and the fact that $\sigma_v(\Xi)$ is $1$ for all but finitely many~$v$ ensure that condition~(ii) holds.
\end{proof}

Thanks to the above lemmas, we may take the following values for the input data listed in section~\ref{ssec:input-data}:
\begin{enumerate}[(i)]
\item $\numfuns=n+1 \geq 2$;
\item $s = \#\Xi = n$ (\cref{sin-Lambda});
\item $H = H$;
\item $\rho = {\sum_v}' \log^+(1/\sigma_v(\Xi)) = \sigma(\Xi) \leq H(\Xi) \leq H$ (\cref{radii-simple-neighbourhoods});
\item $\sigma_y = \sigma(\Xi) + 1 \leq H+1$ (\cref{bounds-series-logarithm}(c));
\item $\sigma_\Lambda = 1$ (\cref{bounds-d-module-logarithm}(c)).
\end{enumerate}

\subsubsection{Calculation of explicit height bounds}

As in the statement of \cref{lines-in-torus-height-bound}, let $\delta = \max \{ 1, \tau(K(s)) \}$.

To obtain \cref{lines-in-torus-height-bound}(i), apply \cref{andre-bombieri} with $\kappa=1$ and with $\{m_1,\dotsc,m_\nummonomials\}$ being the set of all monomials of degree~$\delta$ in $n+1$ indeterminates, so that
\[ \nummonomials = \#\cM = \binom{n+\delta}{\delta}. \]
In this case (at least when~$n=3$ and $\delta=1$), $\nummonomials$ is small enough that the improvement which comes from using \eqref{eqn:hxi-general-bound} instead of \eqref{eqn:hxi-general-bound-simp} is worthwhile.
Substituting (i)--(vi), and $\kappa=1$, into \eqref{eqn:hxi-general-bound}, we obtain, for all $s \in C(\ov K) \cap \Sigma_2$,
\begin{align}
    h(x(s)) < {}
  & 8n\nummonomials^2(\nummonomials-\tfrac{1}{2}) \bigl( (\log(\delta)+1)(H+1) + H \bigr)
\notag
\\& + 6\nummonomials \log(2) + \frac{1}{2n}(\log(\delta)+1) + \frac{1}{n}H.
\label{eqn:line-in-torus-weak-full-bound}
\end{align}
When $n \geq 3$, the second line of~\eqref{eqn:line-in-torus-weak-full-bound} is at most
\[ 0.2n \bigl( \log(\delta)+2 \bigr) (H+1). \]
Hence \eqref{eqn:line-in-torus-weak-full-bound} implies \cref{lines-in-torus-height-bound}(i).

When $n=2$, $\Sigma_2$ consists only of torsion points of~$\GG_m^2$, so it is easily seen that every point of $C(\ov K) \cap \Sigma_2$ satisfies
\[ h(x(s)) \leq H+1 \]
and hence satisfies \cref{lines-in-torus-height-bound}(i).

To obtain \cref{lines-in-torus-height-bound}(ii), we use \cref{andre-bombieri-homog-alg-ind}.
(Note that, in this case, $\delta \geq \tfrac{1}{4}(n+2) \geq \tfrac{1}{4}(\numfuns+1)$.)
Substituting (i)--(vi) into~\eqref{eqn:hxi-strong-bound}, and noting that $\theta_\Lambda = \max\{1, H\}$, gives \cref{lines-in-torus-height-bound}(ii).

To deduce \Cref{lines-in-Gm3-height-bound-intro}, substitute $n=3$, $\nummonomials=4$ into \cref{lines-in-torus-height-bound}(i) for the case $\delta=1$, and note that this includes the cases where $K(s)$ is totally real or $[K(s):\QQ] \leq 3$.
For the case $\delta \geq 2 \geq \tfrac{1}{4}(3+2)$, substitute $n=3$, $\refC{hxi-strong-mu-kappa}(3) = \frac{7}{2}$, $\refC{hxi-strong-mu-mult}(3) = \frac{9}{2}$ into \cref{lines-in-torus-height-bound}(ii) and use that $\tau(K(s)) \leq \tfrac{1}{2}[K(s):\QQ]$.

\subsection{Comparison with Habegger's explicit bound} \label{ssec:habegger}

In \cite{Hab17}, Habegger proved an explicit height bound for \cref{bmz} (that is, for unlikely and just likely intersections in tori).
The statement is as follows.

\begin{theorem} \label{habegger-bound} \cite[Thm.~11(iii) with $r=s=1$]{Hab17}
Let $n \geq 1$.
Let $C \subset \GG_m^n$ be an irreducible algebraic curve defined over~$\Qbar$.
Suppose that $C$ is not contained in any translate of a proper algebraic subgroup of~$\GG_m^n$.
Then every point $s \in C(\Qbar) \cap \Sigma_1$ satisfies
\[ h_2(s) \leq (2\deg(C))^{\refC{habegger-exponent}(n)} (h(C) + 1), \]
where $\newC{habegger-exponent}(n) = 600n^7(2n)^{n^2}$.
\end{theorem}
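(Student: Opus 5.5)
This statement is Habegger's theorem \cite[Thm.~11(iii)]{Hab17}, quoted as a comparison point. The plan is therefore not to reprove it from the G-function machinery of this paper: that machinery only handles $\Sigma_2$, needs $C$ to pass through a torsion point, and gives bounds depending on $[K(s):\QQ]$. Instead, I would specialise the general statement of \cite[Thm.~11]{Hab17} to $r=s=1$ and check that the normalisations match. These are $h_2$ (the $\ell^2$-type height on $\GG_m^n\subset\PP^n$ or $(\PP^1)^n$), $h(C)$ and $\deg(C)$. Under the substitution, the exponent takes the stated form $600n^7(2n)^{n^2}$.

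If one instead wanted a self-contained argument, I would follow the structure of Bombieri--Masser--Zannier \cite{BMZ99} with Habegger's explicit inputs.

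\textbf{Step 1.} Take $s\in C\cap\Sigma_1$, so $s$ lies in some algebraic subgroup $H$ of codimension $1$, say $H=\ker(X^{b})$ with $b\in\ZZ^n$ primitive. Use geometry of numbers (Dirichlet/Minkowski approximation of the logarithmic vector of $s$) to replace $H$ by a subgroup $H'$ of controlled degree containing $s$ up to a torsion translate. This reduces matters to intersecting $C$ with $H'$.

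\textbf{Step 2.} Bound $h(s)$ by a height inequality on $C$. The arithmetic Bézout theorem bounds $h(C\cap H')$ in terms of $\deg H'$, $\deg C$ and $h(C)$. A lower bound for the height of points of $C$ with respect to the line bundle pulled back from the quotient torus $\GG_m^n/H'$ then yields a contradiction unless $h(s)$ is bounded. The input here is the effective Bogomolov-type / Zhang inequality, i.e.\ the hypothesis that $C$ is not in a translate of a proper subgroup.

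\textbf{Step 3.} Balance the two estimates by choosing the approximation parameter optimally. Track the constants, which produces a $(2\deg C)^{c(n)}$ dependence with $c(n)$ of order $n^7(2n)^{n^2}$.

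The main obstacle is Step 2, specifically the explicit lower bound for heights of non-torsion-coset points in terms of the degree of the approximating subgroup. This is where Habegger's explicit Bogomolov input and the exponent $(2n)^{n^2}$ come from. In practice I would simply cite \cite{Hab17} for it rather than rederive it.
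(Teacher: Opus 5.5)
Your primary plan matches the paper: the paper does not prove this statement but quotes it from \cite[Thm.~11(iii)]{Hab17} with $r=s=1$, so your optional BMZ-style sketch is not needed. The one thing to state explicitly when specialising is the paper's remark that, for a curve $C$ not contained in a translate of a proper algebraic subgroup, Habegger's set $C^{oa,[1]}$ equals all of $C$, which is what makes his bound apply to every point of $C(\Qbar)\cap\Sigma_1$.
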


We write $h_2(s)$ for the height denoted by~$h(s)$ in \cite{Hab17} (this is different from the present paper's $h(s)$).
The height of the curve~$h(C)$ was defined by Philippon~\cite{Philippon}, and a summary of the definition can be found in \cite[\S 3.2]{Hab17}.
To compare \cref{habegger-bound} with \cite[Thm.~11]{Hab17}, note that, when $C$ is a curve, the set called $C^{oa,[1]}$ in \cite{Hab17} is equal to~$C$.

The notions of height used in \cref{lines-in-torus-height-bound,habegger-bound} may be compared as follows.

\begin{lemma} \label{height-comparison-point}
Let $\Xi = \{ a_1,\dotsc,a_n \} \subset K$.
Let $\ov C$ denote the line
\[ \ov C = \{ (1+a_1\xi, \dotsc, 1+a_n\xi) : \xi \in \AAA^1 \} \subset \AAA^n. \]

\begin{enumerate}
\item Every point $s = (1+a_1\xi, \dotsc, 1+a_n\xi) \in \ov C(\ov K)$ satisfies 
\[ \abs{h_2(s)-h(\xi)} \leq \sigma(\Xi) + \log(2\sqrt{n+1}). \]
\item Suppose that $a_i=1$ for some~$i$ (which may be achieved by scaling).
Then 
\[ \abs{h(\ov C) - \sigma(\Xi)} \leq (2n+1)\log(2) + 1. \]
\end{enumerate}
\end{lemma}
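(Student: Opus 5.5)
The plan is to prove both parts by comparing local contributions place by place and then summing with the weights $[K_v:\QQ_v]/[K:\QQ]$. Throughout I take $h_2$ to be the projective height of $[1:s_1:\dotsb:s_n]$ used in \cite{Hab17}: maximum norm at non-archimedean places, $\ell^2$-norm at archimedean places. I take $h(\ov C)$ to be Philippon's height, which for a linear variety is a normalised height of its Plücker (Chow) coordinates.

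\emph{Part (1).} Write $s_i = 1 + a_i\xi$. I will prove the two inequalities separately, each from local estimates.

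For $h_2(s) \le h(\xi) + \dotsb$: at every place,
\[ \abs{1+a_i\xi}_v \le c_v \max\{1,\abs{a_i}_v\}\max\{1,\abs{\xi}_v\}, \]
with $c_v = 2$ at archimedean places and $c_v = 1$ otherwise. At archimedean places the $\ell^2$-norm of an $(n+1)$-vector costs an extra factor at most $\sqrt{n+1}$. Summing gives
\[ h_2(s) \le h(\xi) + {\sum_v}'\log\max_i\max\{1,\abs{a_i}_v\} + \log(2\sqrt{n+1}). \]

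For $h(\xi) \le h_2(s) + \dotsb$: write $\xi = (s_i-1)/a_i$ for a well-chosen index $i$ (depending on $v$), namely one with $\abs{a_i}_v$ maximal. This gives
\[ \log^+\abs{\xi}_v \le \log\max\{1,\abs{s}_v\} + \log^+\bigl(1/\max_i\abs{a_i}_v\bigr) + \log c_v, \]
and the middle term is at most $\log(1/\sigma_v)$.

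The main obstacle is matching both error terms to the single quantity $\sigma(\Xi)$. One direction naturally produces the affine height of $(a_1,\dotsc,a_n)$, the other that of $(1/a_1,\dotsc,1/a_n)$. I would first try to treat the mismatched term by the product formula: the error $\sum_v'\log\max_i\max\{1,\abs{a_i}_v\}$ can be rewritten using $\sum_v'\log\abs{a_i}_v = 0$ for a fixed $i$. If that fails, I would reinterpret $\Xi$ as $\{-1/a_i\}$, consistent with \eqref{eqn:Xi-ai}, and redo the bookkeeping. In the other direction I would use the sharper choice of index, minimising $\abs{a_i}_v^{-1}$, so that only one of the two heights appears.

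\emph{Part (2).} The line $\ov C$, viewed in $\PP^n$, is spanned by the points $[1:1:\dotsb:1]$ (the case $\xi=0$) and $[0:a_1:\dotsb:a_n]$ (the direction). Its Plücker coordinates are therefore, up to sign, the $a_j$ and the differences $a_j-a_k$. I will invoke the standard formula expressing Philippon's height of a linear subvariety as the $\ell^2$-projective height of its Plücker vector plus an explicit constant; for a line this constant is a harmonic-number type term bounded by $1$. This reduces part (2) to comparing the Plücker height with $\sigma(\Xi)$.

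Since some $a_i = 1$, the Plücker vector contains the entry $1$. Hence its height is at least $\sum_v'\log\max_j\max\{1,\abs{a_j}_v\}$. Conversely, every entry is bounded locally by $2\max_j\max\{1,\abs{a_j}_v\}$. The $\ell^2$-norm over at most $\binom{n+1}{2}$ coordinates contributes at most about $n\log 2$. Together these give a discrepancy of at most $(2n+1)\log(2) + 1$.

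The same issue of $a_j$ versus $1/a_j$ reappears here, so the normalisation $a_i = 1$ is where I would resolve it. With an entry equal to $1$, the projective heights of $(1:a_1:\dotsb:a_n)$ and of the Plücker vector both agree, up to $O(n)$, with the affine height of $(a_j)$. I expect reconciling this with the definition of $\sigma$ to be the only delicate step.
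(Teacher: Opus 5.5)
The paper states this lemma without proof, so your proposal has to stand on its own. Your local estimates are the right ones. However, the step you leave open cannot be closed by your first suggestion, because the lemma is false if $\Xi=\{a_1,\dotsc,a_n\}$ and $\sigma$ is taken literally from Section~2. Under that reading, $\log(1/\sigma_v(\Xi))=\max_i\log^+\abs{a_i^{-1}}_v$.

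For a counterexample, take $n=2$, $a_1=1/5$, $a_2=1/7$ and $\xi=1$. The point $[1:6/5:8/7]=[35:42:40]$ is primitive, so $h_2(s)\geq\log 42$, while $h(\xi)=0$. Also $\sigma(\{1/5,1/7\})=\log 7$, since only the archimedean place contributes. Since $\log 7+\log(2\sqrt{3})<\log 25$, we get $\abs{h_2(s)-h(\xi)}>\sigma(\Xi)+\log(2\sqrt{n+1})$.

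So no use of the product formula can convert your upper-bound error ${\sum_v}'\max_i\log^+\abs{a_i}_v$ into ${\sum_v}'\max_i\log^+\abs{a_i^{-1}}_v$; these two affine heights are incomparable. Your fallback is therefore not optional. The lemma must be read with $\Xi=\{-1/a_1,\dotsc,-1/a_n\}$ as in \eqref{eqn:Xi-ai}, i.e.\ with $\sigma(\Xi)={\sum_v}'\max_i\log^+\abs{a_i}_v$. This is also what the remark after the definition of $\sigma$ claims, although that remark does not match the displayed definition.

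With this reading your upper bound in (1) is already in final form, but your lower bound is not. Your claim that the middle term $\log^+(1/\max_i\abs{a_i}_v)$ is at most $\log(1/\sigma_v(\Xi))$ was only valid under the literal (false) reading. Now $\log(1/\sigma_v(\Xi))$ vanishes wherever all $\abs{a_i}_v\leq 1$, while your middle term need not: it equals $\log 5$ at the archimedean place in the example above. This is where the product formula genuinely enters, globally rather than place by place:
${\sum_v}'\log^+(1/\max_i\abs{a_i}_v)\leq{\sum_v}'\log^+\abs{a_1^{-1}}_v=h(a_1)\leq\sigma(\Xi)$.
Together with the $\log 2$ from the archimedean triangle inequality, this gives $h(\xi)-h_2(s)\leq\sigma(\Xi)+\log 2$.

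In (2), the normalisation $a_i=1$ does not reconcile $a$ with $a^{-1}$ either. For $a=(1,1/p,1/q)$ with primes $p<q$, the two affine heights are $\log(pq)$ and $\log q$. The actual role of the normalisation is to put an entry $1$ into the Pl\"ucker vector $(a_k,\,a_k-a_j)$, so that its projective height is at least ${\sum_v}'\max_j\log^+\abs{a_j}_v$. Under the corrected reading, your Pl\"ucker estimate then gives (2) directly. You still need a precise reference for the formula comparing Philippon's height of a linear subvariety with the $\ell^2$-height of its Pl\"ucker coordinates.
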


The details of \cref{height-comparison-point} are not important.
The point is that the error bounds in \cref{height-comparison-point} and inequality~\eqref{eqn:sigma-H-comparison} are tiny compared to the factor $2^{\refC{habegger-exponent}(n)}$ in \cref{habegger-bound}.
So we may ignore the different measures of height when comparing \cref{lines-in-torus-height-bound} with \cref{habegger-bound}.

When $C$ is a line, $\deg(C)=1$, so \cref{habegger-bound} gives
\[ h_2(s) \leq 2^{\refC{habegger-exponent}(n)} (h(C) + 1). \]
When $n=3$, we have $\refC{habegger-exponent}(3) = 600 \cdot 3^7 \cdot 6^9 \approx 1.3 \cdot 10^{13}$, so this gives roughly
\[ h_2(s) \leq 10^{4 \cdot 10^{12}} (h(C)+1). \]
When $[K(s):\QQ]$ is small (or when $K(s)$ is totally real), \cref{lines-in-Gm3-height-bound-intro} is much better than this.

To see how large $\delta$ should be for \cref{habegger-bound} to beat \cref{lines-in-torus-height-bound}, note that
\[ \log_2 \bigl( 8n^3 \refC{hxi-strong-mu-kappa}(n)^2 \refC{hxi-strong-mu-mult}(n) \bigr) < 14 + 3n \]
is tiny when compared with $\refC{habegger-exponent}(n)$.
Hence, the constants in \cref{lines-in-torus-height-bound} can essentially be ignored in the comparison.
Thus, the cross-over occurs when
\[ n \log_2(\delta) \approx \refC{habegger-exponent}(n), \]
that is, when 
\[ \delta \approx 2^{600n^6(2n)^{n^2}}. \]
For example, when $n=3$, \cref{lines-in-Gm3-height-bound-intro} is better than \cref{habegger-bound} for $\delta$ up to around $10^{10^{12}}$.

\begin{remark}
The comparison above is not entirely fair.
The enormous constant in \cref{habegger-bound} partly comes from the fact that \cite{Hab17} proves a bound for unlikely and just likely intersections with arbitrary subvarieties in~$\GG_m^n$, not just curves.
Further improvements are possible to \cite{Hab17} by specialising to lines, most significantly to \cite[Prop.~8.1]{Hab17}.
By optimising the calculations of \cite{Hab17} for the special case of lines, one can improve the factor $2^{\refC{habegger-exponent}(n)}$ to $2^{27} n^{17}$.
When $n=3$, this is roughly $10^{16}$.
This beats \cref{lines-in-Gm3-height-bound-intro} for $\delta$ greater than around $10\,000$.
\end{remark}

\bibliographystyle{amsalpha}
\bibliography{heights}

\end{document}